\definecolor{refkey}{gray}{.75}
\definecolor{labelkey}{gray}{.2}
\newtheorem{theorem}{Theorem}[section]
\newtheorem{lemma}[theorem]{Lemma}
\theoremstyle{definition}
\newtheorem{remark}[theorem]{Remark}
\newcommand{\innerproduct}[2]{\langle #1, #2 \rangle}
\newcommand{\norm}[1]{\left\lVert#1\right\rVert}
\newcommand{\R}{\mathbb{R}}
\newcommand{\BB}{\boldsymbol{B}}
\newcommand{\DD}{\boldsymbol{D}}
\newcommand{\vv}{\boldsymbol{v}}
\newcommand{\SSS}{\boldsymbol{S}}
\newcommand{\ww}{\boldsymbol{w}}
\newcommand{\dx}{\,\mathrm{d}x}
\newcommand{\dt}{\,\mathrm{d}t}
\newcommand{\ds}{\,\mathrm{d}s}
\newcommand{\pa}{\partial}
\newcommand{\fff}{\boldsymbol{f}}
\newcommand{\ffpsi}{\boldsymbol{\psi}}
\newcommand{\e}{\varepsilon}
\newcommand{\calI}{\mathcal I}
\newcommand{\calJ}{\mathcal J}
\numberwithin{equation}{section}
\def\ocirc#1{\ifmmode\setbox0=\hbox{$#1$}\dimen0=\ht0
    \advance\dimen0 by1pt\rlap{\hbox to\wd0{\hss\raise\dimen0
    \hbox{\hskip.2em$\scriptscriptstyle\circ$}\hss}}#1\else
    {\accent"17 #1}\fi}
\begin{document}

\title{On the local well-posedness of strong solutions to the unsteady flows of shear-thinning non-Newtonian fluids with a concentration-dependent power-law index}

\author{
Kyueon Choi,\thanks{Department of Mathematics, Yonsei University, Seoul, Republic of Korea. Email: \tt{gyueon@yonsei.ac.kr}}
~Kyungkeun Kang\thanks{Department of Mathematics, Yonsei University, Seoul, Republic of Korea. Email: \tt{kkang@yonsei.ac.kr}}
~and~Seungchan Ko\thanks{Department of Mathematics, Inha University, Incheon, Republic of Korea. Email: \tt{scko@inha.ac.kr}}
}

\date{}

\maketitle

~\vspace{-1.5cm}

\begin{abstract}
We investigate a system of nonlinear partial differential equations modeling the unsteady flow of a shear-thinning non-Newtonian fluid with a concentration-dependent power-law index. The system consists of the generalized Navier--Stokes equations coupled with a convection-diffusion equation describing the evolution of chemical concentration. This model arises from the mathematical description of the behavior of 
synovial fluid in the cavities of articulating joints. We prove the existence of a local-in-time strong solution in a three-dimensional spatially periodic domain, assuming that $\frac{7}{5} < p^- \le p(\cdot) \le p^+ \le 2$, where $p(\cdot)$ denotes the variable power-law index and $p^-$ and $p^+$ are its lower and upper bounds, respectively. Furthermore, we prove the uniqueness of the solution under the additional condition $p^+ < \frac{28}{15}$. In particular, our three-dimensional analysis directly implies the existence and uniqueness of solutions in the two-dimensional case under the less restrictive condition $1 < p^- \le p(\cdot) \le p^+ \le 2$.
\end{abstract}

\noindent{\textbf{Keywords}{: Non-Newtonian fluid, convection-diffusion equation, synovial fluid, shear-thinning fluid, strong solution, variable power-law index}

\smallskip

\noindent{\textbf{AMS Classification:} 76D03, 35Q35, 35Q92, 76R50

\section{Introduction}
In this paper, we are interested in the existence of a strong solution describing the unsteady motion of a chemically reacting fluid within a three-dimensional periodic domain. To be more specific, we shall investigate the local well-posedness of the following system of equations:
\begin{align}
\pa_t\vv + (\vv \cdot \nabla) \vv - {\rm div}\SSS(c, \DD \vv)  &= - \nabla \pi  + \fff , \label{main_sys} \\
{\rm div}\, \vv &= 0, \\ 
\pa_t c + {\rm div}(c \vv) -\Delta c  &= 0, \label{concentration}
\end{align}
in $Q_T :=\Omega\times I$, where $\Omega=[0,1]^3$ and $I=[0,T]$ with $T>0$. Here, $\vv: Q_T \to \mathbb{R}^3$ denotes the velocity field, $\pi: Q_T \to \mathbb{R}$ is the kinematic pressure, and $c: Q_T \to \mathbb{R}_+$ represents the concentration distribution. The external force is given by $\fff: Q_T \to \mathbb{R}^3$, and the Cauchy stress tensor is denoted by $\SSS(c, \DD \vv): Q_T \to \mathbb{R}^{3 \times 3}$, which depends on the concentration and the symmetric part of the velocity gradient, defined as $\DD \vv = \frac{1}{2} \left( \nabla \vv + (\nabla \vv)^{\top} \right)$. We consider the initial-boundary value problem corresponding to equations \eqref{main_sys}-\eqref{concentration}, subject to periodic boundary conditions in the spatial domain $[0,1]^3$ for both the velocity field $\vv$ and the concentration $c$. The initial conditions are specified by
\begin{equation*} 
\vv(x,0) = \vv_0(x) \quad\text{and}\quad c(x,0) = c_0(x) \quad \text{for } x \in \Omega.
\end{equation*}
Regarding the Cauchy stress tensor $\SSS$, we consider the constitutive relation of the form
\begin{equation*}
\SSS(c, \DD \vv) = 2 \nu(c, |\DD \vv|)\, \DD \vv,
\end{equation*}
where the viscosity function $\nu$ is given by
\begin{equation}\label{visc_form}
\nu(c, |\DD \vv|) = \nu_0 \left(1 + |\DD \vv|^2\right)^{\frac{p(c)-2}{2}},
\end{equation}
with a fixed constant $\nu_0 > 0$. The exponent $p: \mathbb{R}_{\ge 0} \to \mathbb{R}_{\ge 0}$ is assumed to be a Lipschitz continuous function satisfying
\[
1 < p^- \leq p(c) \leq p^+ < \infty,
\]
for some positive constants $p^-$ and $p^+$. Then it is straightforward to verify the following properties of the Cauchy stress tensor. There exist positive constants $K_1$, $K_2$ and $K_3$ such that for all $(c, \DD) \in \mathbb{R} \times \mathbb{R}^{3 \times 3}$ and for any $\BB \in \mathbb{R}^{3 \times 3}$, the following inequalities hold:
\begin{equation} \label{P1}
\begin{split}
\frac{\partial \SSS(c, \DD)}{\partial \DD} : (\BB \otimes \BB) &\geq K_1 \left(1 + |\DD|^2 \right)^{\frac{p(c) - 2}{2}} |\BB|^2, \\
\left| \frac{\partial \SSS(c, \DD)}{\partial \DD} \right| &\leq K_2 \left(1 + |\DD|^2 \right)^{\frac{p(c) - 2}{2}}, \\
\left| \frac{\partial \SSS(c, \DD)}{\partial c} \right| &\leq K_3 \left(1 + |\DD|^2 \right)^{\frac{p(c) - 1}{2}} \log\left(2 + |\DD|\right),
\end{split}
\end{equation}
where the tensor product $(\BB \otimes \BB)$ is defined componentwise by $(\BB \otimes \BB)_{ijkh} = B_{ij} B_{kh}$. Moreover, these properties imply the following monotonicity condition: there exists a constant $K_4 > 0$ such that for any $c \in \mathbb{R}$ and any $\DD_1, \DD_2 \in \mathbb{R}^{3 \times 3}$, there holds
\begin{equation} \label{P2}
\left( \SSS(c, \DD_1) - \SSS(c, \DD_2) \right) : (\DD_1 - \DD_2) \geq K_4 \left(1 + |\DD_1|^2 + |\DD_2|^2 \right)^{\frac{p(c) - 2}{2}} |\DD_1 - \DD_2|^2.
\end{equation}

\begin{figure}
    \centering    \includegraphics[width=0.4\textwidth]{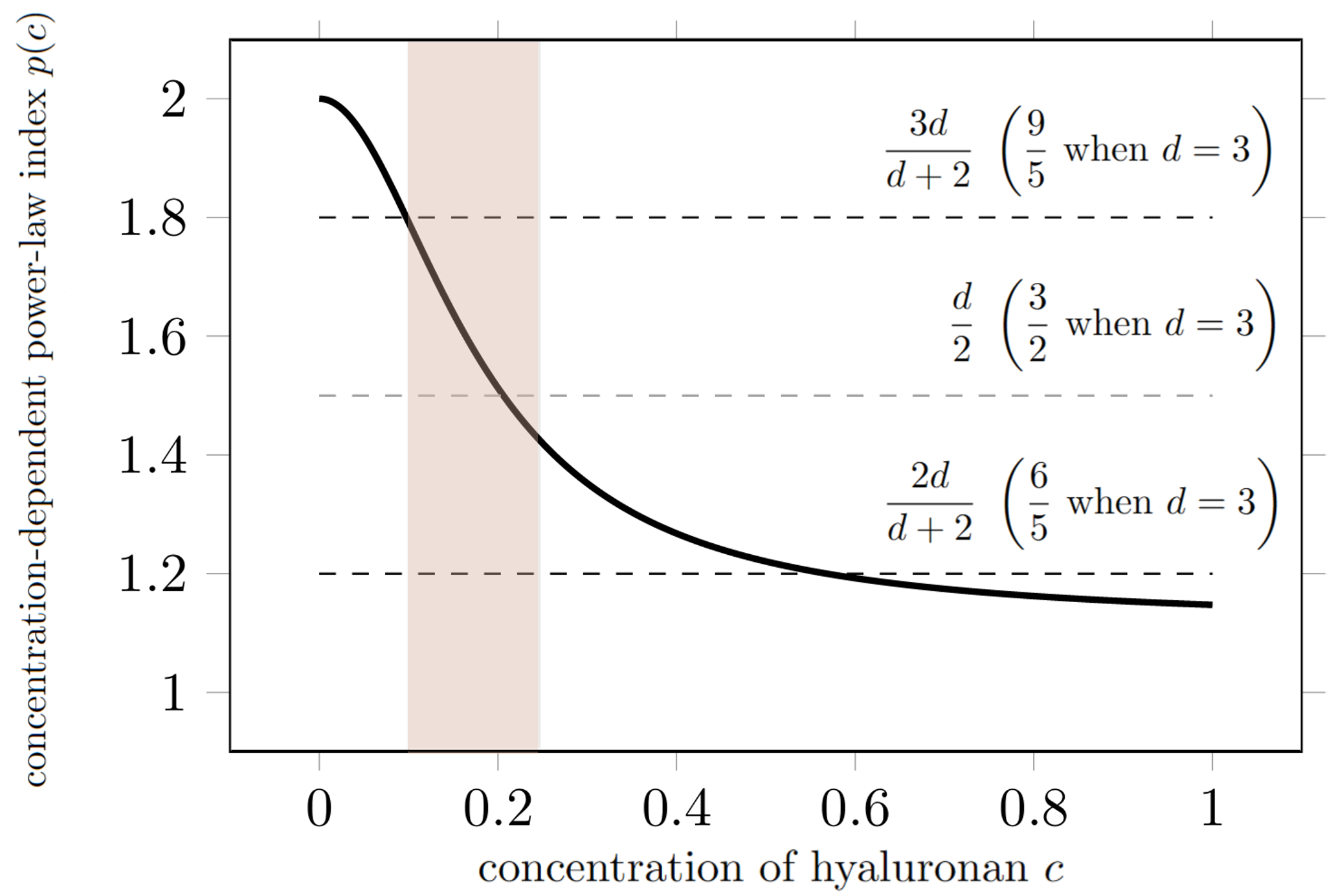}
    \caption{The exponent $p(c)$ is plotted as a function of the concentration of hyaluronan molecules associated with the viscosity \eqref{visc_form} for synovial fluid; see, e.g., \cite{Hron2010} for further details. Physiological values typically observed in non-pathological synovial fluid lie approximately within the range $(0.1, 0.25)$, as indicated by the shaded region in the figure. It is evident that the condition $p^-\geq \frac{d+2}{2}$ ($\frac{5}{2}$ when $d=3$) assumed in \cite{choi2024} is too restrictive, and an existence analysis for smaller values of the power-law index is required to cover the case of higher physiological concentrations.}
    \label{fig:CRF_index}
\end{figure}

The chemically reacting fluid flow model constitutes a generalization of power-law-like non-Newtonian fluid models. The mathematical study of non-Newtonian fluids was initiated in the late 1960s by Ladyzhenskaya and Lions in \cite{Lady1967}, \cite{Lady1969} and \cite{Lion1969}. In these works, the authors performed existence analysis for weak solutions to unsteady problems under the assumption $p \geq \frac{3d+2}{d+2}$ (and $p \geq \frac{3d}{d+2}$ for steady-state problems), utilizing the monotone operator theory. Although the range of admissible exponents was somewhat restrictive for modeling real-world phenomena, these findings nonetheless represented a remarkable advancement in the mathematical theory of incompressible non-Newtonian fluid dynamics. Since then, substantial progress has been made in the theory of non-Newtonian fluids to establish the existence of weak solutions for values of $p$ that are consistent with real-world phenomena. In particular, to control the convective term, effective approximation tools such as $L^\infty$ and Lipschitz truncation techniques were introduced, leading to the establishment of existence results for weak solutions corresponding to a broader class of power-law indices, namely for $p > \frac{2d}{d+2}$; see, for instance, \cite{FMS1997, R1997, FMS2003, DMS2008} for steady-state problems, and \cite{MNJR2001, W2007, Diening2010, BDS2013} for unsteady flows. Further investigation for this model can be found in a number of literature including \cite{Malek2003, Diening2010, kang_2, kang_1}.

Interestingly, experimental studies have shown that in many non-Newtonian fluids, the power-law index $p$ is not constant, but varies with external or internal factors. This model naturally arises in mathematical models of \textit{smart fluids}, such as electro-rheological fluids \cite{rubo}, micro-polar electro-rheological fluids \cite{win-r}, magneto-rheological fluids \cite{bia_2005}, thermo-rheological fluids \cite{AR06}, and chemically reacting fluids \cite{HMPR10}. In each of these models, the variable power-law exponent $p(\cdot)$ depends on certain physical fields or quantities such as electric or magnetic fields, temperature, or the concentration of a specific molecule. Smart fluids of this type have attracted considerable interest due to their wide range of potential applications across diverse fields, including the automotive and heavy machinery sectors, electronics, aerospace engineering, and biomedical technologies (see, e.g., \cite[Chap.\ 6]{smart_fluids} and references therein).

In this paper, among these models, we focus on the chemically reacting fluids where the power-law index depends on the chemical concentration of the fluid. A representative example is synovial fluid, which can be found within joint cavities. In this case, the power-law index depends on the concentration of a specific molecule known as {\textit{hyaluronan} present in the solution. The rheological background of this model can be found in \cite{Hron2010, Pustejovska2012}. For the elliptic problem, the first mathematical results on this model were presented in \cite{Bulicek2013, Bulicek2014}, where the existence of the weak solution for $p^- \ge \frac{d}{2}$ was established. Furthermore, the existence of the classical solutions to the stationary model in two dimensions was shown in \cite{Bulicek2019}. From the viewpoint of computational mathematics, a conforming finite element approximation for the stationary model was constructed, and the convergence analysis was performed in \cite{ko2d, ko3d}. For the corresponding parabolic problem, the existence of weak solutions was investigated in \cite{Ko2022}, where the author established an existence theory under the condition $p^- > \frac{d+2}{2}$ using the monotone operator theory. Moreover, the global-in-time existence of strong solutions in both two and three spatial dimensions, under the assumption $p^- \geq \frac{d+2}{2}$, was shown in \cite{choi2024}.

However, experimental and rheological studies, such as those reported in \cite{Bulicek2009, Hron2010}, indicate that synovial fluid, which is the prototype example of a chemically reacting fluid, exhibits shear-thinning behavior, as can be found in Figure \ref{fig:CRF_index}. Therefore, the model treated in \cite{choi2024} does not fully capture the experimentally observed rheology of the chemically reacting fluids. This motivates us to prove the well-posedness of strong solutions for this model with less restrictive values of $p$. Note that the method employed in \cite{choi2024} does not allow for a further reduction of the lower bound $p^-$ in the global-in-time framework for strong solutions. However, motivated by the approach of Diening et al. \cite{Diening2005}, we identify a new possibility in the local-in-time setting to establish existence results within the shear-thinning regime. This constitutes the primary objective of the present work.

The rest of this paper is organized as follows. In Section \ref{sec:prelim}, we introduce some notation and auxiliary results, and state our main theorems. Section \ref{sec:galerkin} is devoted to the construction of a Galerkin approximation for a regularized system, along with the derivation of uniform estimates for the approximate solutions. In Section \ref{sec:limit}, we establish the local-in-time existence of a strong solution to the chemically reacting fluid flow model by passing to the limit in the approximate system. Section \ref{sec:unique} is concerned with the uniqueness of strong solutions under an additional assumption on $p(\cdot)$. In Section \ref{sec:2D}, 
by applying our proof to the two-dimensional problem, we will establish the same result under less restrictive assumptions in two dimensions. Finally, concluding remarks are presented in Section \ref{sec:conc}.

\section{Preliminaries and main results}\label{sec:prelim}

In this section, we shall introduce our main result along with some preliminaries which will be used throughout the paper. For two vectors $\boldsymbol{a}$, $\boldsymbol{b}$, $\boldsymbol{a}\cdot \boldsymbol{b}$ means the dot product and for two tensors $\mathbb{A}$, $\mathbb{B}$, $\mathbb{A}:\mathbb{B}$ denotes the scalar product. Throughout the paper, the symbol $C$ denotes a generic positive constant, whose value may change from line to line. For $k\in\mathbb{N}\cup\{0\}$ and $1\leq q\leq\infty$, $L^q(\Omega)$ and $W^{k,q}(\Omega)$ present the classical Lebesgue and Sobolev spaces respectively. Moreover, we shall simply write $\|\cdot\|_q=\|\cdot\|_{L^q(\Omega)}$ and $\|\cdot\|_{k,q}=\|\cdot\|_{W^{k,q}(\Omega)}$. For a Banach space $X$, the Bochner space $L^q(I;X)$ is defined by the family of functions with the finite Bochner-type norm
\begin{align*}
    & \|u\|_{L^q(I;X)} := \begin{cases}
        \bigg(\displaystyle \int_I \|u(t)\|_X^q \dt \bigg)^\frac{1}{q} & \text{if } 1 \le q < \infty, \\
        \displaystyle\sup_{t\in I} \|u(t)\|_X & \text{if } q = \infty,
    \end{cases}
\end{align*}
with an abbreviation $L^q(Q_T)=L^q(I;L^q(\Omega))$. Since we will deal with the spatially periodic functions, we shall restrict ourselves to the case of the mean-zero functions over $\Omega$. In this perspective, we will use the following function spaces throughout the paper which are frequently used in the study of incompressible fluid flow problems on a periodic domain: For $k \in \mathbb{N} \cup \{0\}$ and $1 \le q < \infty$,
\begin{align*}
&\mathcal{V}:= \{ \varphi \in C^\infty(\Omega) : \langle \varphi,1\rangle = 0,\,\,{\rm div}\, \varphi =0 \}, \\
&W^{k,q}_{\rm div} (\Omega) := \overline{\mathcal{V}}^{\|\cdot\|_{W^{k,q}}},
\end{align*}
where $\innerproduct{\cdot}{\cdot}$ denotes the $L^2$-inner product over $\Omega$.

Unlike standard power-law-like non-Newtonian fluids, the energy functional $\int_\Omega |\DD\vv|^{p(c)} \dx$ arising in the chemically reacting fluid flow model cannot be adequately described within the framework of classical Lebesgue and Sobolev spaces. Consequently, considerable effort has been devoted to the study of variable-exponent Lebesgue and Sobolev spaces, denoted by $L^{p(\cdot)}$ and $W^{k,p(\cdot)}$, respectively. The structural properties of these spaces play a crucial role in the analysis of the model. For further details, we refer the reader to \cite{diening}.

We also define the notations that will be used frequently throughout the paper. Specifically, we shall introduce the relevant energies for the model defined as:
\begin{equation}\label{special_energy}
\begin{aligned}
\calI_p(c,\vv) := \int_\Omega (\overline{\DD}\vv)^{p(c)-2}|\nabla \DD\vv|^2 \dx, \\
\calJ_p(c,\vv) := \int_\Omega (\overline{\DD}\vv)^{p(c)-2}|\pa_t \DD\vv|^2 \dx,
\end{aligned}
\end{equation}
where
\[
\overline{\DD} \vv := (1+ |\DD\vv|^2)^\frac{1}{2}.
\]
In the above quantities, since $|\nabla^2 \vv| \leq 3|\nabla\DD\vv| \le 6|\nabla^2 \vv|$ in general, the quantity $|\nabla\DD \vv|$ can always be replaced by $|\nabla^2 \vv|$ at the expense of increasing the multiplicative constant. This type of energy functional will frequently appear in our later analysis. In particular, the energy associated with the spatial derivative satisfies the following property, whose proof can be found in \cite{Diening2005}. 
\begin{lemma} \label{DDv,Dv}
     Let $\calI_p(c,\vv)$ be the energy defined in \eqref{special_energy} with variable exponent $p(\cdot)$ satisfying $1<p^-\leq p(\cdot)\le p^+\le2$. Then for sufficiently smooth $\vv$, there holds for a.e. $t\in I$ that
    \begin{equation} 
        \|\nabla^2 \vv(t)\|_{p^-}^{p^-} \le C \calI_p(\cdot,\vv)(t) + C \|\overline{\DD}\vv(t)\|_{p^-}^{p^-},\label{grad^2vp}\\
    \end{equation}
\end{lemma}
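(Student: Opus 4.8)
The plan is to recover an $L^{p^-}$-bound on $\nabla^2\vv$ from the weighted quantity $\calI_p(c,\vv)$ by splitting the spatial domain $\Omega$ into the region where $|\DD\vv|$ is small and the region where it is large, exploiting that the weight $(\overline{\DD}\vv)^{p(c)-2} = (1+|\DD\vv|^2)^{(p(c)-2)/2}$ degenerates only where $|\DD\vv|$ is large. As noted in the excerpt, it suffices to bound $\|\nabla\DD\vv\|_{p^-}^{p^-}$ since $|\nabla^2\vv|$ and $|\nabla\DD\vv|$ are comparable up to constants. Fix $t\in I$ and write $w(x) := (1+|\DD\vv(x,t)|^2)^{1/2} = \overline{\DD}\vv(x,t)$ for brevity; note $w\ge 1$ pointwise.

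First I would apply H\"older's inequality with exponents $2/p^-$ and $2/(2-p^-)$ (valid since $1<p^-\le 2$):
\begin{equation*}
\int_\Omega |\nabla\DD\vv|^{p^-}\dx
= \int_\Omega \paren{ w^{\frac{p(c)-2}{2}} |\nabla\DD\vv| }^{p^-} \cdot w^{-\frac{(p(c)-2)p^-}{2}} \dx
\le \paren{ \int_\Omega w^{p(c)-2}|\nabla\DD\vv|^2 \dx }^{\frac{p^-}{2}} \paren{ \int_\Omega w^{\frac{(2-p(c))p^-}{2-p^-}} \dx }^{\frac{2-p^-}{2}}.
\end{equation*}
The first factor on the right is exactly $\calI_p(c,\vv)(t)^{p^-/2}$. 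For the second factor, I would use the pointwise bound $2-p(c)\le 2-p^-$ together with $w\ge 1$ to get $w^{(2-p(c))p^-/(2-p^-)} \le w^{(2-p^-)p^-/(2-p^-)} = w^{p^-}$; hence the second factor is at most $\paren{\int_\Omega w^{p^-}\dx}^{(2-p^-)/2} = \|\overline{\DD}\vv(t)\|_{p^-}^{p^-(2-p^-)/2}$. Combining and then applying Young's inequality $ab\le \frac{p^-}{2}a^{2/p^-} + \frac{2-p^-}{2}b^{2/(2-p^-)}$ to the product of the two resulting powers splits it into $C\,\calI_p(c,\vv)(t) + C\,\|\overline{\DD}\vv(t)\|_{p^-}^{p^-}$, which is the claimed estimate \eqref{grad^2vp}.

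The only genuinely delicate point is that the exponent $p(\cdot)$ in the weight depends on $c(x,t)$ and hence on $x$, so one must be careful that the pointwise comparisons $p^-\le p(c(x,t))\le p^+\le 2$ are used uniformly in $x$; since these bounds hold everywhere by hypothesis, H\"older's inequality goes through with the fixed constant exponents $2/p^-$ and $2/(2-p^-)$ and the variable-exponent subtleties never actually bite here — the weight is simply estimated pointwise. (One should also record that everything requires $\vv(\cdot,t)$ smooth enough that $\nabla^2\vv(\cdot,t)\in L^{p^-}(\Omega)$ and that the integrals on the right are finite, which is part of the hypothesis.) I would remark that this is essentially the argument in \cite{Diening2005}, adapted to the concentration-dependent exponent, and that an entirely analogous computation with $\nabla\DD\vv$ replaced by $\pa_t\DD\vv$ would yield the corresponding bound for $\calJ_p(c,\vv)$, which is why the statement is phrased with $\calI_p$ alone.
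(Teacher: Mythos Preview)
Your proof is correct and is essentially the standard argument: the paper itself does not give a proof but defers to \cite{Diening2005}, and what you have written is precisely the H\"older--Young splitting used there, with the only adaptation being the pointwise use of $p^-\le p(c(x,t))\le 2$ to replace the variable exponent in the weight by the constant $p^-$. One minor remark: your computation implicitly assumes $p^-<2$ (so that the conjugate exponent $2/(2-p^-)$ is finite); the borderline case $p^-=2$ forces $p(\cdot)\equiv 2$, the weight becomes identically $1$, and the estimate is then trivial.
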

Moreover, we will make use of the following inequality in the later uniqueness argument, whose proof can also be found in \cite{Diening2005}.
\begin{lemma} \label{difference_q}
    Let $1 \le \ell \le 2$ and $p(\cdot)$ be a variable exponent satisfying $1<p^-\leq p(\cdot)\le p^+\le2$. Then for sufficiently smooth $\vv_1$ and $\vv_2$, there holds
    \[
    \|\DD(\vv_1 - \vv_2)\|_\ell \le C \left( \int_\Omega ( 1+ |\DD\vv_1|^2 + |\DD\vv_2|^2)^\frac{p(\cdot)-2}{2}|\DD\vv_1 -\DD\vv_2|^2 \dx \right)^\frac{1}{2} \Big\|(\overline{\DD}\vv_1)^\frac{2-p(\cdot)}{2} + (\overline{\DD}\vv_2)^\frac{2-p(\cdot)}{2} \Big\|_{\frac{2\ell}{2-\ell}}.
    \]
\end{lemma}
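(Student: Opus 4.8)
The plan is to obtain the estimate from a single application of Hölder's inequality, preceded by an algebraic splitting of the integrand of $\|\DD(\vv_1-\vv_2)\|_\ell^\ell$ into the weighted difference that appears on the right-hand side (the same quantity featuring in the monotonicity bound \eqref{P2}) and a compensating weight. Write $M := 1 + |\DD\vv_1|^2 + |\DD\vv_2|^2$. The first step is the pointwise identity, valid for every exponent $\ell$,
\[
|\DD\vv_1 - \DD\vv_2|^\ell = \left( M^{\frac{p(\cdot)-2}{2}} |\DD\vv_1 - \DD\vv_2|^2 \right)^{\frac{\ell}{2}} \cdot M^{\frac{(2-p(\cdot))\ell}{4}},
\]
which is just bookkeeping of exponents: the powers of $M$ on the right cancel. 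It is arranged precisely so that the first factor, raised to the power $2/\ell$, reproduces the integrand of the quantity in parentheses in the statement.

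Next I would integrate over $\Omega$ and apply Hölder's inequality with the conjugate pair $\bigl(\tfrac{2}{\ell},\tfrac{2}{2-\ell}\bigr)$, which is admissible exactly because $1 \le \ell \le 2$; the endpoint $\ell = 2$, where $\tfrac{2\ell}{2-\ell} = \infty$, is the obvious limiting case and is recorded separately. This gives
\[
\|\DD(\vv_1-\vv_2)\|_\ell^\ell \le \left( \int_\Omega M^{\frac{p(\cdot)-2}{2}} |\DD\vv_1 - \DD\vv_2|^2 \dx \right)^{\frac{\ell}{2}} \left( \int_\Omega M^{\frac{(2-p(\cdot))\ell}{2(2-\ell)}} \dx \right)^{\frac{2-\ell}{2}}.
\]
The first factor, after taking the $\ell$-th root, is already the first factor in the claim, so it remains only to identify the second integral with the asserted $L^{\frac{2\ell}{2-\ell}}$-norm.

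For the last step I would use the hypothesis $p(\cdot)\le 2$, so that $\theta := \tfrac{2-p(\cdot)}{4} \in [0,\tfrac14] \subset [0,1]$, together with subadditivity $(a+b)^\theta \le a^\theta + b^\theta$ and the trivial bound $M \le (1+|\DD\vv_1|^2) + (1+|\DD\vv_2|^2)$, to get
\[
M^{\frac{2-p(\cdot)}{4}} \le (1+|\DD\vv_1|^2)^{\frac{2-p(\cdot)}{4}} + (1+|\DD\vv_2|^2)^{\frac{2-p(\cdot)}{4}} = (\overline{\DD}\vv_1)^{\frac{2-p(\cdot)}{2}} + (\overline{\DD}\vv_2)^{\frac{2-p(\cdot)}{2}}.
\]
Since $M^{\frac{(2-p(\cdot))\ell}{2(2-\ell)}} = \bigl(M^{\frac{2-p(\cdot)}{4}}\bigr)^{\frac{2\ell}{2-\ell}}$, the second integral above is bounded by $\bigl\| (\overline{\DD}\vv_1)^{\frac{2-p(\cdot)}{2}} + (\overline{\DD}\vv_2)^{\frac{2-p(\cdot)}{2}} \bigr\|_{\frac{2\ell}{2-\ell}}^{\frac{2\ell}{2-\ell}}$, so after raising it to the power $\tfrac{2-\ell}{2}$ it becomes that norm to the power $\ell$. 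Combining the two factors and taking the $\ell$-th root yields the claim. There is no genuine obstacle here: the only things requiring care are the choice of the Hölder pair $2/\ell$ and $2/(2-\ell)$ (so that the leftover weight $M^{\frac{2-p(\cdot)}{4}}$ is raised to exactly $\frac{2\ell}{2-\ell}$), and the observation that it is precisely $p^+ \le 2$ that licenses the subadditivity step; note that $p^- > 1$ plays no role in this particular estimate.
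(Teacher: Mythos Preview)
Your argument is correct and is exactly the standard proof: split the integrand by inserting $M^{\frac{p(\cdot)-2}{4}}M^{\frac{2-p(\cdot)}{4}}$, apply H\"older with the conjugate pair $(2/\ell,\,2/(2-\ell))$, and bound the remaining weight using subadditivity, which is available since $p(\cdot)\le 2$. The paper does not give its own proof but cites \cite{Diening2005}, where precisely this computation appears, so your approach coincides with the intended one.
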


Finally, let us present a localized version of Gr\"onwall's inequality, which is particularly useful in the analysis of various parabolic problems. A detailed proof can be found in the appendix of \cite{Diening2005}. Although the original statement assumes $\zeta \in C^1([0,T])$, it is noteworthy that it suffices for $\zeta$ to be absolutely continuous.

\begin{lemma}[Local version of Gr\"onwall's inequality] \label{Gronwalllemma}
    For given positive constants $\alpha$ and $c_0$, assume that $\phi \in L^1(0,T)$ with $\phi \ge 0$ a.e. in $[0,T]$ and $\zeta$ be a nonnegative absolutely continuous function satisfying
    \[
    \zeta'(t) \le \phi(t) + c_0\zeta(t)^{1+\alpha}.
    \]
    Then there exists $T'\in [0,T]$ such that for all $t \in [0,T']$, it follows that
    \[
    \zeta(t) \le \Phi(t) + \Phi(t) \left( (1- \alpha c_0 \Phi(t)^\alpha t)^{-\frac{1}{\alpha}} -1 \right),
    \]
    where
    \[
    \Phi(t) := \zeta(0) + \int_0^t \phi(s) \ds.
    \]
\end{lemma}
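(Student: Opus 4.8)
The plan is to absorb the accumulated forcing $\Phi$ into the unknown, thereby reducing the problem to a separable differential inequality that can be integrated explicitly. First I would record the elementary properties of $\Phi$: since $\phi \in L^1(0,T)$ with $\phi \ge 0$, the function $\Phi(t) = \zeta(0) + \int_0^t \phi(s)\,\mathrm{d}s$ is nonnegative, nondecreasing, absolutely continuous, bounded on $[0,T]$, and satisfies $\Phi(0) = \zeta(0)$ and $\Phi' = \phi$ a.e. Fix $t^* \in (0,T]$ and a parameter $\epsilon > 0$, work on $[0,t^*]$, and introduce
\[
u_\epsilon(s) := \zeta(s) + \Phi(t^*) + \epsilon - \Phi(s).
\]
Since $\Phi$ is nondecreasing, $\Phi(t^*) - \Phi(s) \ge 0$ on $[0,t^*]$, so $u_\epsilon(s) \ge \zeta(s) + \epsilon \ge \epsilon > 0$; moreover $u_\epsilon(0) = \Phi(t^*) + \epsilon$ and $u_\epsilon(t^*) = \zeta(t^*) + \epsilon$.

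The function $u_\epsilon$ is absolutely continuous, and combining the hypothesis $\zeta'(s) - \phi(s) \le c_0 \zeta(s)^{1+\alpha}$ with the monotonicity of $x \mapsto x^{1+\alpha}$ on $[0,\infty)$ and $0 \le \zeta(s) \le u_\epsilon(s)$ gives, for a.e. $s \in [0,t^*]$,
\[
u_\epsilon'(s) = \zeta'(s) - \phi(s) \le c_0 \zeta(s)^{1+\alpha} \le c_0\, u_\epsilon(s)^{1+\alpha}.
\]
Because $u_\epsilon$ is continuous and bounded away from $0$ on $[0,t^*]$ and $x \mapsto x^{-\alpha}$ is Lipschitz on compact subsets of $(0,\infty)$, the composition $s \mapsto u_\epsilon(s)^{-\alpha}$ is absolutely continuous with $(u_\epsilon^{-\alpha})'(s) = -\alpha\, u_\epsilon(s)^{-\alpha - 1} u_\epsilon'(s) \ge -\alpha c_0$ a.e. Integrating this over $[0,t^*]$ yields
\[
u_\epsilon(t^*)^{-\alpha} \ge (\Phi(t^*) + \epsilon)^{-\alpha} - \alpha c_0\, t^*.
\]

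Suppose now that $\alpha c_0\, \Phi(t^*)^\alpha t^* < 1$. Then for all sufficiently small $\epsilon > 0$ the right-hand side above is positive, hence $\zeta(t^*) \le u_\epsilon(t^*) \le \big((\Phi(t^*)+\epsilon)^{-\alpha} - \alpha c_0 t^*\big)^{-1/\alpha}$, and letting $\epsilon \to 0^+$ gives
\[
\zeta(t^*) \le \Phi(t^*)\big(1 - \alpha c_0\, \Phi(t^*)^\alpha t^*\big)^{-1/\alpha} = \Phi(t^*) + \Phi(t^*)\Big(\big(1 - \alpha c_0 \Phi(t^*)^\alpha t^*\big)^{-1/\alpha} - 1\Big),
\]
which is exactly the asserted bound (in the degenerate case $\Phi(t^*) = 0$ the same limit forces $\zeta(t^*) = 0$, again consistent with the formula). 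It remains to make the smallness condition uniform: the map $t \mapsto \alpha c_0\, \Phi(t)^\alpha t$ is continuous on $[0,T]$ and vanishes at $t = 0$, so there exists $T' \in (0,T]$ with $\alpha c_0\, \Phi(t)^\alpha t < 1$ for every $t \in [0,T']$; applying the previous step with $t^* = t$ for each such $t$ (the case $t = 0$ being the trivial identity $\zeta(0) = \Phi(0)$) finishes the proof.

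The only genuinely delicate point I anticipate is the regularity bookkeeping: justifying the chain rule and the fundamental theorem of calculus for the merely absolutely continuous function $u_\epsilon^{-\alpha}$, while at the same time avoiding division by zero in the separation of variables. The $\epsilon$-shift is introduced precisely to handle both issues simultaneously — in particular it disposes of the degenerate situation $\Phi \equiv 0$ on $[0,t^*]$ — so that no further approximation scheme is needed.
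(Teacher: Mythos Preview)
Your argument is correct. The paper does not supply its own proof of this lemma but simply refers to the appendix of Diening--R\r{u}\v{z}i\v{c}ka (2005); the approach there is the same reduction to a separable superlinear differential inequality that you carry out, so your proof is essentially the standard one, with the $\epsilon$-shift being a clean way to handle the absolute-continuity and positivity issues in one stroke.
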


Now we are ready to state our main theorems concerning the existence and uniqueness of a strong solution of the system under consideration.

\begin{theorem} \label{maintheorem}
Suppose that the variable exponent $p: \R \to \R$ is a Lipschitz continuous function with $\frac{7}{5} < p^- \leq p(\cdot)\leq p^+ \le 2$. Assume further that the initial conditions and the forcing term satisfy the regularity condition
\[
\|\vv_0\|_{2,2} + \|c_0\|_{2,q_0} + \|\fff\|_{L^\infty(I;W^{1,2}(\Omega))} + \|\pa_t \fff\|_{L^2(I;L^2(\Omega))}  \le C
\]
with $q_0\ge4$.
Then, there exists a time $T'\in(0,T)$ such that the system \eqref{main_sys}-\eqref{concentration} admit a strong solution  $(\vv,\pi, c)$ on the interval $I'=[0,T']$ satisfying 
\begin{align*}
    & \vv \in C(I';L^2(\Omega)) \cap L^\infty(I';W_{\rm div}^{1,\frac{12}{5}}(\Omega)) \cap L^{p^-}(I'; W^{2,p^-}(\Omega)),\\
    & c \in C(I';L^2(\Omega)) \cap L^\infty(I';{W^{1,4}(\Omega)}) \cap L^2(I';W^{2,2}(\Omega)),
    \\
    &\partial_t \vv \in L^\infty(I';L^2(\Omega)), \quad \pa_t c \in L^\infty(I';{L^4(\Omega)}), \quad \nabla\pi\in L^\frac{7}{5}(\Omega).
\end{align*}
\end{theorem}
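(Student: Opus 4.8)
The plan is to construct the solution through a Galerkin scheme applied to a suitably regularized system, obtain uniform a priori estimates, and then pass to the limit; the local-in-time nature of the result will come entirely from the application of the local Gr\"onwall inequality (Lemma~\ref{Gronwalllemma}). First I would regularize the viscosity in \eqref{visc_form} — e.g.\ by mollifying or truncating the variable exponent, or adding an $\e(1+|\DD\vv|^2)^{\frac{p^--2}{2}}\DD\vv$ term and bounding $|\DD\vv|$ in the exponent — so that the stress is smooth and uniformly elliptic, making the finite-dimensional Galerkin ODE system well-posed on $[0,T]$. The basic energy estimate (testing \eqref{main_sys} with $\vv$ and \eqref{concentration} with $c$) gives the $L^\infty(I;L^2)$ bounds together with $\nabla\vv\in L^{p^-}$ in space-time and $\nabla c\in L^2(Q_T)$; these are global in time and standard.

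The heart of the proof is the higher-order estimate. Differentiating the concentration equation and testing with $-\Delta c$ (or using maximal parabolic regularity for the linear heat operator with the transport term on the right) yields control of $c$ in $L^\infty(I;W^{1,4})\cap L^2(I;W^{2,2})$ and $\pa_t c\in L^\infty(I;L^4)$ in terms of norms of $\vv$; here the assumption $q_0\ge 4$ and the Sobolev embedding in three dimensions are used. For the velocity, I would test the momentum equation with $-\Delta\vv$ (and with $\pa_t\vv$, and the time-differentiated equation with $\pa_t\vv$) to produce, on the left, the good energy quantities $\calI_p(c,\vv)$ and $\calJ_p(c,\vv)$ from \eqref{special_energy} via the ellipticity in \eqref{P1}--\eqref{P2}. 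On the right one encounters the convective term $(\vv\cdot\nabla)\vv$, the concentration-coupling term coming from $\pa\SSS/\pa c$ (controlled by the third line of \eqref{P1}, with the logarithmic loss absorbed using a slightly higher integrability of $c$), and commutator-type terms from differentiating the $c$-dependent viscosity. Using Lemma~\ref{DDv,Dv} to convert $\calI_p$ into an $L^{p^-}$ bound on $\nabla^2\vv$, Gagliardo--Nirenberg interpolation, and Korn/Sobolev inequalities, I would close the estimate in the form $\zeta'(t)\le \phi(t)+c_0\,\zeta(t)^{1+\alpha}$, where $\zeta$ aggregates $\|\vv(t)\|_{1,12/5}$, $\|\pa_t\vv(t)\|_2$, $\calI_p$, and the concentration norms, and $\phi$ collects the data; the superlinear power $1+\alpha>1$ is the price of the shear-thinning range $p^-\le 2$ and of having only $\nabla^2\vv\in L^{p^-}$ rather than $L^2$. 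Lemma~\ref{Gronwalllemma} then gives a time $T'>0$ on which $\zeta$ stays bounded. The precise bookkeeping of exponents — verifying that every nonlinear term can indeed be absorbed when $p^->\tfrac75$, which is exactly where this threshold enters through the interpolation inequalities and the space $W^{1,12/5}_{\rm div}$ — is the main obstacle and the most delicate part.

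With uniform bounds in hand on the Galerkin level, I would extract weakly/weakly-$*$ convergent subsequences in the spaces listed in the theorem, use Aubin--Lions to get strong convergence of $\vv$ in, say, $L^2(I;W^{1,2})$ and of $c$ in $L^2(I;W^{1,2})$ (enough to pass to the limit in the convective and coupling terms and in the pressure-free projected equation), and identify the nonlinear stress limit by the monotonicity property \eqref{P2} together with the strong convergence of $\DD\vv$ that the second-derivative bound provides. Finally I would remove the regularization by the same compactness and monotonicity argument, recover the pressure $\pi$ from the momentum equation via the periodic Helmholtz decomposition (obtaining $\nabla\pi\in L^{7/5}$ from the regularity of the other terms, with $7/5$ matching the integrability of $\pa_t\vv$ and $\op{div}\SSS$ after the second-derivative bound), and check the attainment of initial data using the continuity-in-time statements $\vv\in C(I';L^2)$, $c\in C(I';L^2)$. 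Uniqueness is deferred to a later section and uses Lemma~\ref{difference_q} under the extra hypothesis $p^+<\tfrac{28}{15}$, so I would not address it here.
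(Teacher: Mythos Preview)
Your broad outline---Galerkin, higher-order testing with $-\Delta\vv$ and $\pa_t\vv$, the energies $\calI_p,\calJ_p$, local Gr\"onwall, then Aubin--Lions and compactness---matches the paper. But two technical choices diverge from the paper in ways that matter.

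First, the regularization. You propose to regularize the \emph{viscosity}; the paper instead mollifies the \emph{initial concentration} $c_0\mapsto c_{0,\delta}=\eta_\delta*c_0$ and uses a \emph{two-level} Galerkin scheme in which one first passes $m\to\infty$ in the concentration equation, so that \eqref{concentration} holds pointwise with a still finite-dimensional $\vv_\delta^n$. The point is not well-posedness of the ODE (which is fine either way) but the applicability of the local Gr\"onwall lemma: the paper needs $t\mapsto\|\nabla c_\delta^n(t)\|_q$ and $t\mapsto\|\pa_t c_\delta^n(t)\|_q$ to be absolutely continuous, and obtains this from classical parabolic regularity once the concentration equation is at the continuous level with smooth data. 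Your viscosity regularization does not provide this, and at the fully discrete level for $c$ you cannot legitimately test \eqref{concentration} with $-\nabla\cdot(|\nabla c|^{q-2}\nabla c)$ or with $|\pa_t c|^{q-2}\pa_t c$ (the tests the paper uses to get the $L^q$, $q\ge4$, estimates on $\nabla c$ and $\pa_t c$), since these are not in the Galerkin span. Testing only with $-\Delta c$ as you suggest gives $L^2$-based bounds, not the $L^q$ bounds with $q\ge4$ that the $\pa\SSS/\pa c$ terms (the $A_2,B_2$ of the paper) actually require.

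Two smaller points. In your Gr\"onwall functional $\zeta$, $\calI_p$ should not appear: in the paper $\calI_p$ and $\calJ_p$ sit on the left as dissipation, while $\zeta$ collects $\|\overline{\DD}\vv\|_{12/5}^{12/5}+\|\pa_t\vv\|_2^2+\|\nabla c\|_q^q+\|\pa_t c\|_q^q$. And for the stress limit the paper does not use monotonicity at all: the $W^{2,p^-}$ bound plus Aubin--Lions gives $\DD\vv_\delta^n\to\DD\vv_\delta$ a.e., and Vitali's theorem (with the uniform $L^{(p(c))'}$ bound on $\SSS$) identifies the limit directly.
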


As a second main result, we will also establish the following theorem concerning the uniqueness of the strong solution described in Theorem \ref{maintheorem}.

\begin{theorem} \label{uniqueness theorem}
    Under the assumptions of Theorem \ref{maintheorem}, if the power-law index $p(\cdot)$ further satisfies the condition $p^+<\frac{2p^--2}{2-p^-} p^-$, then the strong solution constructed in Theorem \ref{maintheorem} is unique. 
\end{theorem}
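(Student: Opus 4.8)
The plan is to prove uniqueness by a standard energy argument for the difference of two strong solutions, combined with the localized Gr\"onwall inequality (Lemma~\ref{Gronwalllemma}) to absorb the supercritical nonlinear terms. Suppose $(\vv_1,\pi_1,c_1)$ and $(\vv_2,\pi_2,c_2)$ are two strong solutions on $I'$ with the same data, and set $\ww := \vv_1-\vv_2$, $\sigma := \pi_1-\pi_2$, $d := c_1-c_2$. These differences satisfy
\begin{align*}
\pa_t\ww + (\vv_1\cdot\nabla)\ww + (\ww\cdot\nabla)\vv_2 - {\rm div}\big(\SSS(c_1,\DD\vv_1)-\SSS(c_2,\DD\vv_2)\big) &= -\nabla\sigma,\\
{\rm div}\,\ww &= 0,\\
\pa_t d + {\rm div}(d\,\vv_1) + {\rm div}(c_2\,\ww) - \Delta d &= 0,
\end{align*}
with $\ww(0)=0$, $d(0)=0$. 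First I would test the velocity equation with $\ww$ and the concentration equation with $d$. For the stress difference I split $\SSS(c_1,\DD\vv_1)-\SSS(c_2,\DD\vv_2) = \big(\SSS(c_1,\DD\vv_1)-\SSS(c_1,\DD\vv_2)\big) + \big(\SSS(c_1,\DD\vv_2)-\SSS(c_2,\DD\vv_2)\big)$; the first bracket gives a good term controlled below via the monotonicity \eqref{P2}, while the second is an error term estimated by the third bound in \eqref{P1}, yielding $\big| \SSS(c_1,\DD\vv_2)-\SSS(c_2,\DD\vv_2)\big| \le C |d|\, (\overline{\DD}\vv_2)^{p(\cdot)-1}\log(2+|\DD\vv_2|)$, which is integrable against $\nabla\ww$ because $\vv_2 \in L^{p^-}(I';W^{2,p^-})$ and $d\in L^\infty(I';L^4)$ provide enough room (the logarithm costs an arbitrarily small power).

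The core of the argument is the convective term $\int_\Omega (\ww\cdot\nabla)\vv_2\cdot\ww \dx$, which I bound by $\|\nabla\vv_2\|_r \|\ww\|_{2r'}^2$ for suitable $r$, interpolating $\|\ww\|_{2r'}$ between $\|\ww\|_2$ and $\|\nabla\ww\|_2$ (or $\|\DD\ww\|_\ell$ via Korn), and using Lemma~\ref{difference_q} to convert $\|\DD\ww\|_\ell$ into $\big(\int_\Omega(1+|\DD\vv_1|^2+|\DD\vv_2|^2)^{\frac{p(\cdot)-2}{2}}|\DD\ww|^2\big)^{1/2}$ times a norm of $(\overline{\DD}\vv_1)^{(2-p(\cdot))/2}+(\overline{\DD}\vv_2)^{(2-p(\cdot))/2}$. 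Since $\vv_i\in L^\infty(I';W^{1,12/5})$ and $L^{p^-}(I';W^{2,p^-})$, the latter factor lies in an appropriate $L^s(Q_T)$ space, and this is exactly where the condition $p^+ < \frac{2p^--2}{2-p^-}\,p^-$ enters: it guarantees that the exponents balance so that the error can be written as $\varepsilon\, y'(t)$-type contribution plus a term of the form $\phi(t) + c_0\, y(t)^{1+\alpha}$ with $\alpha>0$, where $y(t):=\tfrac12\|\ww(t)\|_2^2+\tfrac12\|d(t)\|_2^2$ and $\phi\in L^1$ collects the linear-in-$y$ pieces (after a Gr\"onwall-type rescaling $y \mapsto y e^{-\Lambda t}$ to absorb the genuinely linear terms such as $\int d\,{\rm div}(c_2\ww)$ handled by the parabolic smoothing of $d$ and the bound on $\nabla c_2$). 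I should be careful to absorb the highest-order good terms — the coercive part $K_4\int_\Omega(\overline{\DD}\vv_1^2+\overline{\DD}\vv_2^2+\cdots)^{(p(\cdot)-2)/2}|\DD\ww|^2\dx$ from monotonicity and $\|\nabla d\|_2^2$ from the Laplacian — on the left-hand side, keeping a fixed fraction available to control the nonlinear error via Young's inequality.

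Once the differential inequality $y'(t) \le \phi(t) + c_0\, y(t)^{1+\alpha}$ with $y(0)=0$ and $\phi\in L^1(0,T')$, $\phi\ge0$, is established, the actual data being equal forces $\Phi(t) = y(0)+\int_0^t\phi\,ds$; however, unlike a true comparison argument this alone does not immediately give $y\equiv0$, so I would instead note that all the error terms are in fact quadratic in $(\ww,d)$ with at worst $y^{1+\alpha}$ growth and \emph{no} inhomogeneous part, i.e.\ one can take $\phi\equiv0$ after the rescaling (every term on the right carries at least one factor of $\ww$ or $d$). Then Lemma~\ref{Gronwalllemma} with $\zeta(0)=y(0)=0$ gives $\Phi\equiv0$ and hence $y(t)=0$ on $[0,T']$, so $\ww\equiv0$, $d\equiv0$, and uniqueness of $\pi$ up to a constant follows from the equation. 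The main obstacle is the bookkeeping in the previous paragraph: verifying that under $\frac{7}{5}<p^-\le p^+\le2$ and $p^+ < \frac{2p^--2}{2-p^-}p^-$ the Sobolev/interpolation exponents for the convective term and for the $\nabla\SSS$-difference term actually close with the available regularity $\vv_i\in L^\infty(I';W^{1,12/5})\cap L^{p^-}(I';W^{2,p^-})$, $c_i\in L^\infty(I';W^{1,4})$, $\pa_t\vv_i\in L^\infty(I';L^2)$; in particular one must check that the power of $y(t)$ produced is strictly greater than $1$ (so $\alpha>0$) and that all remaining time-integrands are genuinely in $L^1(0,T')$. I expect the condition in the theorem to be precisely the inequality needed for the exponent $\frac{2\ell}{2-\ell}$ appearing in Lemma~\ref{difference_q} to be dominated by the integrability of $(\overline{\DD}\vv_i)^{(2-p(\cdot))/2}$ coming from $W^{2,p^-}\hookrightarrow L^{3p^-/(3-p^-)}$ interpolated against the $L^\infty_t L^{12/5}_x$ bound.
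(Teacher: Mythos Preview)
Your overall strategy (energy on the differences, split the stress term via \eqref{P2} and \eqref{P1}, control $\|\DD\ww\|_\ell$ through Lemma~\ref{difference_q}) is correct in spirit, but there are two concrete gaps that would prevent the argument from closing as written.

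\textbf{First, the test function on the concentration difference.} You test the $d$-equation with $d$, so that $\|\nabla d\|_2^2$ appears only as a dissipation term. However, the cross term ${\rm P_2}=\int_\Omega\big(\SSS(c_1,\DD\vv_2)-\SSS(c_2,\DD\vv_2)\big):\DD\ww\,\mathrm{d}x$ inevitably produces, after Young's inequality against the monotonicity term, a contribution of the form $F(t)\|d\|_6^2$ with $F\in L^1(I')$ but \emph{not} small. In three dimensions $\|d\|_6\le C\|\nabla d\|_2$ with no interpolation slack, so you are left with $F(t)\|\nabla d\|_2^2$ on the right-hand side, which cannot be absorbed into a unit-coefficient dissipation $\|\nabla d\|_2^2$. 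The paper fixes this by testing the concentration difference with $-\Delta d$, so that $\tfrac{1}{2}\tfrac{\rm d}{\dt}\|\nabla d\|_2^2$ enters the Gr\"onwall variable and $F(t)\|\nabla d\|_2^2$ becomes a genuine linear term handled by the standard Gr\"onwall inequality. Your remark that ``$d\in L^\infty(I';L^4)$ provides enough room'' is dangerous here: using the a~priori $L^4$ bound on $d$ as a coefficient would leave an inhomogeneous term with no factor of $\ww$ or $d$, and then $y(0)=0$ no longer forces $y\equiv 0$.

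\textbf{Second, the role of the hypothesis $p^+<\tfrac{2p^--2}{2-p^-}\,p^-$.} You expect this condition to be what makes the factor $\|(\overline{\DD}\vv_i)^{(2-p(\cdot))/2}\|_{2\ell/(2-\ell)}$ from Lemma~\ref{difference_q} finite. It is not: that bound follows from $\vv_i\in L^\infty(I';W^{1,12/5})$ alone, by choosing $\ell\in(\tfrac{8}{5},2)$ so that $\tfrac{2-p(\cdot)}{2}\cdot\tfrac{2\ell}{2-\ell}\le\tfrac{12}{5}$, which only uses $p^->\tfrac{7}{5}$. The uniqueness hypothesis enters \emph{exclusively} in the ${\rm P_2}$ estimate, and there it requires a regularity sharper than what you list, namely
\[
\DD\vv \in L^{\frac{5p^--6}{2-p^-}p^-}\big(I';L^{3p^-}(\Omega)\big),
\]
which the paper extracts from the energy bound $\int_{I'}\calI_p(c,\vv)^{\frac{5p^--6}{2-p^-}}\,\dt\le C$ (obtained by combining \eqref{Ipv2}, \eqref{uniformest1}, \eqref{uniformest2}) together with the Sobolev inequality $\|(\overline{\DD}\vv)^{p^-/2}\|_6^2\le C\calI_p(c,\vv)+C$. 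The condition on $p^+$ is then exactly what is needed to make the time-space integrability of $(1+|\DD\vv_1|^2+|\DD\vv_2|^2)^{(2p^+-p^-)/4+\alpha}$ match this regularity (cf.\ \eqref{p+p-}). Your available regularity $L^\infty_t W^{1,12/5}_x\cap L^{p^-}_t W^{2,p^-}_x$ is not enough by itself.

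Two smaller points: the final differential inequality is \emph{linear} in the Gr\"onwall quantity (with $L^1$ coefficients), so the local nonlinear Gr\"onwall lemma and the discussion of $y^{1+\alpha}$ are unnecessary; and when testing with $-\Delta d$ you will also need, for the term $\int_\Omega|(\vv_1-\vv_2)\cdot\nabla c_1|^2\,\mathrm{d}x$, the improved integrability $\nabla c_1\in L^\nu(Q_{T'})$ for every $\nu<\infty$, which the paper obtains from maximal parabolic regularity for the convection--diffusion equation.
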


\begin{remark}
    Note that, since we are considering the case $p^->\frac{7}{5}$, the upper bound of $p^+$ in the above theorem satisfies $\frac{2p^--2}{2-p^-}>\frac{28}{15}$. Therefore, the above uniqueness result holds at least for $p^+<\frac{28}{25}$, which corresponds to a physically relevant regime as depicted in Figure \ref{fig:CRF_index}.
\end{remark}

As described above, the primary objective of this paper is the analysis of the local well-posedness of strong solutions in three dimensions. One may wonder about the corresponding result in two dimensions. As will be shown later, our approach can be directly applied to the two-dimensional setting. In this case, the same result can be established under a less restrictive condition on the exponent, specifically within the whole shear-thinning regime $1 < p^- \leq p(\cdot) \leq p^+ \leq 2$. A detailed discussion is provided in Section~\ref{sec:2D}.

\section{Galerkin approximation and uniform estimates}\label{sec:galerkin}

\subsection{Galerkin approximation for the regularized system} \label{best}

In this section, we investigate the Galerkin approximation for the PDE system \eqref{main_sys}-\eqref{concentration}. As will become evident later, our proof requires a certain regularity of the concentration to apply the Gr\"onwall inequality. To ensure this, we introduce a regularized version of the concentration equation, incorporating smoothed initial data using a mollifier. Upon establishing the existence of a solution to the regularized system, the proof is completed by passing to the limit with respect to the regularization parameter. As described above, we consider the system
\begin{equation} \label{regularsystem}
\begin{aligned}
\pa_t\vv_\delta + (\vv_\delta \cdot \nabla) \vv_\delta - {\rm div}\SSS(c_\delta, \DD \vv_\delta)  &= - \nabla \pi_\delta  + \fff && \text{in } Q_T,  \\
{\rm div}\, \vv_\delta &= 0 &&\text{in } Q_T, \\ 
\pa_t c_\delta + {\rm div}(c_\delta \vv_\delta) -\Delta c_\delta  &= 0 &&\text{in } Q_T,
\end{aligned}
\end{equation}
together with the initial conditions $\vv_\delta(\cdot,0) =\vv_0$ and $c_\delta(\cdot,0) =c_{0,\delta}$ in $\Omega$, where $c_{0,\delta} := \eta_\delta * c_0$ and $\eta_\delta$ is standard molifier with $\delta>0$. Let us now consider the Galerkin approximation for the system \eqref{regularsystem}. 
Let $\{\ww_i\}$ be the set of eigenvectors of the Stokes operator and $\{\lambda_i\}$ be the corresponding set of eigenvalues. Since we consider the case of periodic boundary conditions, we can formally write 
\begin{equation} \label{vbasis}
    -\Delta \ww_i = \lambda_i \ww_i \quad \text{in } \Omega.
\end{equation}
Referring to Theorem 4.11 in \cite{malek}, we can choose $\{\ww_i\}_{i=1}^\infty$ which forms a basis of $W^{s,2}_{\rm div}(\Omega)$, where $s>\frac{5}{2}$ so that $W^{s,2}_{\rm div}(\Omega) \hookrightarrow W^{1,\infty}_{\rm div}(\Omega)$. For the concentration equation, we consider the orthonormal basis $\{z_i\}_{i=1}^{\infty}$ of $W^{1,2}(\Omega)$. We aim to construct an approximate solutions $(\vv_\delta^{n,m}, c_\delta^{n,m})$ given by
\begin{equation*}
\vv_\delta^{n,m} = \sum_{i=1}^n \alpha_i^{n,m}(t)\ww_i \quad\text{and}\quad c_\delta^{n,m} = \sum_{j=1}^m \beta_j^{n,m}(t)z_j,
\end{equation*}
which satisfy the following system of ODEs: for all $i=1,\ldots n$ and $j=1,\ldots m$,
\begin{align}
\innerproduct{\pa_t \vv_\delta^{n,m}}{\ww_i} + \innerproduct{
(\vv_\delta^{n,m} \cdot \nabla) \vv_\delta^{n,m}}{\ww_i}
+ \innerproduct {\SSS_\delta^{n,m}}{\DD \ww_i} &= \innerproduct{\fff}{\ww_i}, \label{aux1} \\ 
\innerproduct{\pa_tc_\delta^{n,m}}{z_j} + \innerproduct{\vv_\delta^{n,m} \cdot \nabla c_\delta^{n,m}}{z_j} + \innerproduct{\nabla c_\delta^{n,m}}{\nabla z_j} &=0, \label{aux2}
\end{align}
 where $\SSS_\delta^{n,m} := \SSS(c_\delta^{n,m},\DD\vv_\delta^{n,m})$. Here, the initial conditions are given by
\[
    \vv_\delta^{n,m}(\cdot,0) = P^n\vv_0 \quad\text{and}\quad c_\delta^{n,m}(\cdot,0) = P^mc_{0,\delta},
\]
where $P^n$, $P^m$ represent the orthogonal projection operators onto $A_n:={\rm span}\{\ww_1,\ldots,\ww_n\}$ and $B_m:={\rm span}\{z_1,\ldots,z_m\}$, respectively.
 Thanks to Carath\'eodory's theorem (see, e.g., \cite{Zeidler1990}), it is straightforward to show that the local existence of solutions to the Galerkin approximation \eqref{aux1}-\eqref{aux2}, and the solution can be extended to the whole time interval $(0,T)$ using the typical uniform estimates and iterative argument. 

Next, since our analysis primarily concerns the Navier--Stokes equations and relies on certain properties of the concentration that are valid only at the continuous level, we first take the limit $m\to\infty$ in the convection-diffusion equation for the concentration. To do this, note that the argument used in \cite{Ko2022} allows us to pass to the limit $m \to \infty$, yielding the solutions $\vv_\delta^n=
\sum_{i=1}^n\alpha_i^n(t)\ww_i\in A_n$ and $c_\delta^n\in L^2(I;W^{1,2}(\Omega))$ with $\pa_t c_\delta^n \in L^2(I;W^{-1,2}(\Omega))$ satisfying the following intermediate problem: for all $i=1,\ldots,n$, 
\begin{align}
\innerproduct{\pa_t \vv_\delta^n}{\ww_i} + \innerproduct{
(\vv_\delta^{n} \cdot \nabla) \vv_\delta^{n}}{\ww_i} + \innerproduct {\SSS_\delta^n}{\DD \ww_i} &= \innerproduct{\fff}{\ww_i}, \label{aux11} \\ 
\innerproduct{\pa_tc_\delta^n}{\varphi} + \innerproduct{\vv_\delta^n \cdot \nabla c_\delta^n}{\varphi} + \innerproduct{\nabla c_\delta^n}{\nabla \varphi} &= 0, \label{aux22}
\end{align}
for arbitrary $\varphi \in W^{1,2}(\Omega)$ with $\SSS_\delta^n:= \SSS(c_\delta^n,\DD \vv_\delta^n)$ and the initial conditions
\[
    \vv_\delta^n(\cdot,0) = P^n\vv_0 \quad\text{and}\quad c_\delta^n(\cdot,0) = c_{0,\delta}.
\]
Note further, from the assumptions $\fff \in L^\infty(I;W^{1,2}(\Omega))$ and $\pa_t \fff \in L^2(I;L^2(\Omega))$ in Theorem \ref{maintheorem},  that we can deduce $\alpha_i^n, \pa_t \alpha_i^n \in L^\infty(I)$ and $\pa_t^2 \alpha_i^n \in L^2(I)$, which implies $\vv_\delta^n \in W^{2,2}(I;A_n)$. Thus, we may apply the classical parabolic regularity theory (see, e.g., \cite{evans}) to obtain
\begin{equation} \label{regularity of cn}
c_\delta^n, \pa_t c_\delta^n, \pa_t^2 c^n_\delta \in L^\infty(I;C^\infty(\Omega)). 
\end{equation}
This enables us to write the equation \eqref{aux22} in a pointwise manner as 
\begin{equation} \label{regularequationcn}
    \pa_t c_\delta^n + \vv_\delta^n \cdot \nabla c_\delta^n - \Delta c_\delta^n = 0 \quad \text{a.e. in }Q_T.
\end{equation}

\subsection{A priori estimates} \label{uniform est 3D}

In this section, we shall derive certain {\textit{a priori}} estimates as an intermediate step toward obtaining a uniform estimate via the application of the local Grönwall inequality. For the sake of simplicity, we will drop the indices $n\in\mathbb{N}$ and $\delta>0$. Throughout the arguments in this section, unless otherwise specified, all constants appearing are understood to be independent of $n\in\mathbb{N}$ and $\delta>0$. By multiplying the $i$-th equation of \eqref{aux11} by $\alpha_i^n$ and summing over $i = 1, \ldots, n$, we can derive
\[
\frac{1}{2}\frac{\rm d}{{\rm d}t}\int_{\Omega} |\vv|^2\dx + \int_{\Omega} \SSS(c,\DD\vv):\DD \vv\dx = \int_{\Omega} \fff\cdot \vv\dx,
\]
from which we can obtain the inequality
\begin{equation} \label{best1}
    \sup_{0\le t \le T} \norm{\vv(t)}_2^2 + \int_0^T \int_{\Omega} \left(|\nabla \vv|^{p(c)} + |\SSS(c,\DD\vv)|^{(p(c))'} \right)\dx\dt \le C.
\end{equation}

Now, let us multiply the $i$-th equation in \eqref{aux11} by $\lambda_i \alpha_i^n(t)$ and sum over $i=1,\ldots,n$. Then it follows from \eqref{vbasis} that
\[
\int_{\Omega} \pa_t \vv \cdot (-\Delta \vv) \dx + \int_{\Omega} ((\vv \cdot \nabla) \vv) \cdot (-\Delta \vv) \dx + \int_{\Omega} \SSS(c,\DD \vv):\DD (-\Delta \vv) \dx = \int_{\Omega} \fff \cdot (-\Delta \vv) \dx.
\]
The integration by parts yields
\begin{equation}\label{deltavtest}
    {\rm A_1} + {\rm A_2} \le \int_\Omega |\nabla \fff \cdot \nabla \vv| \dx + \int_\Omega |\nabla \vv|^3 \dx + \int_\Omega |\pa_t \vv \cdot \Delta \vv| \dx,
\end{equation}
where
\begin{equation*} 
{\rm A_1} := \int_{\Omega}  \frac{\pa \SSS(c,\DD)}{\pa \DD}:(\pa_{x_k}\DD\vv\otimes\pa_{x_k}\DD\vv) \dx\quad\text{and}\quad {\rm A_2} :=\int_{\Omega}  \pa_{x_k} c  \left(\frac{\pa\SSS(c,\DD)}{\pa c} : \pa_{x_k}\DD \vv\right)
  \dx.
\end{equation*}
Thanks to \eqref{P1}, we have from \eqref{deltavtest} that
\begin{equation} \label{Ipv}
C\calI_p(c,\vv) \le |{\rm A_2}| + \int_\Omega |\nabla \fff \cdot \nabla \vv| \dx + \int_\Omega |\nabla \vv|^3 \dx + \int_\Omega |\pa_t \vv \cdot \Delta \vv| \dx.
\end{equation}
Next, we shall take the time derivative to \eqref{aux11} and multiply the $i$-th equation by $\frac{\rm {d}}{{\rm{d}}t}\alpha_i^n(t)$. By summing over $i=1,\ldots,n$, we get
\begin{equation} \label{vttest}
\frac{1}{2} \frac{\rm d}{\dt} \|\pa_t \vv\|_2^2 + {\rm B_1}+{\rm B_2} \le \int_\Omega |\pa_t \fff\cdot  \pa_t \vv | \dx + \int_\Omega |\pa_t((\vv \cdot \nabla) \vv) \cdot \pa_t \vv| \dx,
\end{equation}
with
\begin{equation*}
{\rm B_1} := \int_\Omega \frac{\pa \SSS(c,\DD)}{\pa \DD} : (\DD \pa_t \vv \otimes \DD\pa_t \vv) \dx \quad\text{and}\quad{\rm B_2}:= \int_\Omega \pa_t c \left(\frac{\pa \SSS(c,\DD)}{\pa c} : (\DD \pa_t \vv)\right)  \dx.
\end{equation*}
Again, due to \eqref{P1} and \eqref{vttest}, we have
\begin{equation} \label{vt22}
\frac{1}{2} \frac{\rm d}{\dt} \|\pa_t \vv\|_2^2 + C \calJ_p(c,\vv) \le |{\rm B_2}| + \int_\Omega |\pa_t \fff\cdot  \pa_t \vv | \dx + \int_\Omega |\pa_t((\vv \cdot \nabla) \vv) \cdot \pa_t \vv| \dx.
\end{equation}

In the rest of this section, we mainly aim to estimate the terms on the right-hand side of \eqref{Ipv} and \eqref{vt22}. To get a local-in-time solution for $p^+ \le 2$, motivated by the argument used in \cite{Diening2005}, we shall estimate $\frac{\rm d}{\dt}\|\overline{\DD}\vv\|_s^s$ for some $s>1$ in terms of $\epsilon \calJ_p(c,\vv)$. In this perspective, we start with the following lemma, whose proof can be found in \cite{Diening2005}.

\begin{lemma} \label{Dvss}
    Let $1<s<\infty$, then for a.e. $t\in I$ and for sufficiently small $\epsilon>0$, there holds
    \begin{equation} \label{Dvsseqn}
    \frac{\rm d}{\dt}\|\overline{\DD}\vv(t)\|_s^s \le \epsilon \calJ_p(c,\vv)(t) + C \|\overline{\DD}\vv(t)\|_{2s-p^-}^{2s-p^-}.
    \end{equation}
\end{lemma}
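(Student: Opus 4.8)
The plan is to differentiate the quantity $\norm{\overline{\DD}\vv(t)}_s^s=\int_\Omega (1+|\DD\vv|^2)^{s/2}\dx$ directly in time. For the Galerkin approximant $\vv=\vv_\delta^n$ we have $\vv\in W^{2,2}(I;A_n)$ with $\vv(t)$ smooth in space, so differentiation under the integral sign is justified, and the chain rule gives
\[
\frac{\rm d}{\dt}\norm{\overline{\DD}\vv(t)}_s^s = s\int_\Omega (1+|\DD\vv|^2)^{\frac{s}{2}-1}\,\DD\vv:\pa_t\DD\vv\dx .
\]
Using the elementary bound $|\DD\vv|\le\overline{\DD}\vv$, the integrand is controlled pointwise in absolute value by $(\overline{\DD}\vv)^{s-1}|\pa_t\DD\vv|$, hence
\[
\abs{\frac{\rm d}{\dt}\norm{\overline{\DD}\vv(t)}_s^s}\le s\int_\Omega (\overline{\DD}\vv)^{s-1}|\pa_t\DD\vv|\dx .
\]

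Next I would bring in the weight appearing in $\calJ_p$. Writing $(\overline{\DD}\vv)^{s-1}=(\overline{\DD}\vv)^{\frac{p(c)-2}{2}}\cdot(\overline{\DD}\vv)^{s-\frac{p(c)}{2}}$ (both factors pointwise positive since $\overline{\DD}\vv\ge1$) and applying the Cauchy--Schwarz inequality in $x$,
\[
\int_\Omega (\overline{\DD}\vv)^{s-1}|\pa_t\DD\vv|\dx \le \paren{\int_\Omega(\overline{\DD}\vv)^{p(c)-2}|\pa_t\DD\vv|^2\dx}^{\!1/2}\paren{\int_\Omega(\overline{\DD}\vv)^{2s-p(c)}\dx}^{\!1/2}= \calJ_p(c,\vv)^{1/2}\paren{\int_\Omega(\overline{\DD}\vv)^{2s-p(c)}\dx}^{\!1/2}.
\]
Since $\overline{\DD}\vv\ge1$ and $p(c)\ge p^-$ give $(\overline{\DD}\vv)^{2s-p(c)}\le(\overline{\DD}\vv)^{2s-p^-}$ pointwise, the last integral is bounded by $\norm{\overline{\DD}\vv(t)}_{2s-p^-}^{2s-p^-}$. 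Combining these estimates yields
\[
\abs{\frac{\rm d}{\dt}\norm{\overline{\DD}\vv(t)}_s^s}\le s\,\calJ_p(c,\vv)(t)^{1/2}\,\norm{\overline{\DD}\vv(t)}_{2s-p^-}^{\frac{2s-p^-}{2}},
\]
and Young's inequality $s\,ab\le\epsilon a^2+\tfrac{s^2}{4\epsilon}b^2$ with $a=\calJ_p(c,\vv)(t)^{1/2}$, $b=\norm{\overline{\DD}\vv(t)}_{2s-p^-}^{(2s-p^-)/2}$ gives \eqref{Dvsseqn} with $C=C(\epsilon,s)$.

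There is no genuine obstacle here; the only points needing (routine) care are the justification of differentiating under the integral sign, which follows from the spatial smoothness and $W^{2,2}$-in-time regularity of the Galerkin approximant recorded in Section~\ref{best}, and the exponent bookkeeping: one checks $s-1-\tfrac{p(c)-2}{2}=s-\tfrac{p(c)}{2}$ so that the discarded factor squared is exactly $(\overline{\DD}\vv)^{2s-p(c)}$, and that $2s-p^->0$ (true since $s>1\ge p^-/2$) so the right-hand side is an honest norm. Finally, the whole argument is pointwise in $t$, so the inequality holds for a.e. $t\in I$ as stated.
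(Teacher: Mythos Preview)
Your proof is correct and is precisely the standard argument: the paper itself gives no proof of this lemma, referring instead to \cite{Diening2005}, where the same computation (differentiate $\int_\Omega(\overline{\DD}\vv)^s\dx$, insert the weight $(\overline{\DD}\vv)^{(p-2)/2}$, apply Cauchy--Schwarz and then Young) is carried out. Your exponent bookkeeping and the justification via the Galerkin regularity are exactly the points one needs, so nothing is missing.
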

For the later use, we shall choose $s=\frac{12}{5}$, which naturally arise from our assumption $p^->\frac{7}{5}$. The above lemma allows us to add $\|\overline{\DD}\vv\|_\frac{12}{5}$ on the right-hand-side when we estimate the quantities in \eqref{Ipv} and \eqref{vt22}.} In this perspective, let us first estimate the terms $\int_\Omega |\nabla \vv|^3 \dx$ and $\int_\Omega \pa_t ((\vv \cdot \nabla ) \vv) \cdot \pa_t v \dx$ in \eqref{Ipv} and \eqref{vt22} respectively in terms of $\|\overline{\DD}\vv\|_\frac{12}{5}$, which are encapsulated in the following lemmas. The proofs for these estimates are presented in \cite{Diening2005}.

\begin{lemma} \label{gradv3}
    There exist constants $R_{1},R_{2}>\frac{12}{5}$, $R_{3}>2$ such that for sufficiently small $\epsilon>0$, there holds
    \begin{align*}
    \|\nabla \vv\|_3^3 &\le C \|\overline{\DD}\vv\|_\frac{12}{5}^{R_{1}} + \epsilon \calI_p(c,\vv) + \epsilon, \\
    \int_\Omega \pa_t((\vv \cdot \nabla) \vv) \cdot \pa_t \vv \dx &\le \epsilon \calJ_p(c,\vv) + C \left(\|\overline{\DD}\vv\|_\frac{12}{5}^{R_{2}}+\|\pa_t \vv\|_2^{R_{3}} +1 \right).
    \end{align*}
\end{lemma}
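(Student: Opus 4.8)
The plan is to prove the two estimates in Lemma~\ref{gradv3} by interpolation, using the bound $\|\nabla^2\vv\|_{p^-}^{p^-}\le C\calI_p(\cdot,\vv)+C\|\overline\DD\vv\|_{p^-}^{p^-}$ from Lemma~\ref{DDv,Dv} (so that $\calI_p$ controls the top spatial derivative), together with Sobolev embedding and Young's inequality. For the first estimate, I would write $\|\nabla\vv\|_3$ as an interpolation inequality between $\|\nabla\vv\|_{12/5}$ (hence $\|\overline\DD\vv\|_{12/5}$ up to constants, since $|\nabla\vv|\le C|\DD\vv|$ for divergence-free periodic fields via Korn/Fourier) and $\|\nabla^2\vv\|_{p^-}$, the latter entering through the Gagliardo--Nirenberg inequality $\|\nabla\vv\|_3\le C\|\nabla^2\vv\|_{p^-}^{\theta}\|\nabla\vv\|_{12/5}^{1-\theta}$ for a suitable $\theta\in(0,1)$ determined by the scaling relation $\frac13=\theta(\frac1{p^-}-\frac13)+(1-\theta)\frac5{12}$ in three dimensions (and one checks $3\theta<p^-$, i.e.\ $\theta<p^-/3$, which is exactly where the hypothesis $p^->\tfrac75$ is used so that the exponent on $\|\nabla^2\vv\|_{p^-}$ stays below $p^-$). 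Then $\|\nabla\vv\|_3^3\le C\|\nabla^2\vv\|_{p^-}^{3\theta}\|\overline\DD\vv\|_{12/5}^{3(1-\theta)}$, and applying Young's inequality with exponents $p^-/(3\theta)$ and its conjugate splits this into $\epsilon\|\nabla^2\vv\|_{p^-}^{p^-}$ plus a power of $\|\overline\DD\vv\|_{12/5}$; invoking Lemma~\ref{DDv,Dv} converts $\epsilon\|\nabla^2\vv\|_{p^-}^{p^-}$ into $\epsilon\calI_p(c,\vv)+\epsilon C\|\overline\DD\vv\|_{p^-}^{p^-}$, and since $p^-\le\frac{12}{5}$ the last term is absorbed into $C\|\overline\DD\vv\|_{12/5}^{R_1}+\epsilon$ after another Young step (using that on the torus the mean-zero normalization makes all these norms comparable up to lower-order constants). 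One sets $R_1:=3(1-\theta)\cdot\frac{p^-}{p^--3\theta}$ and checks $R_1>\frac{12}{5}$.

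For the second estimate I would first expand $\pa_t((\vv\cdot\nabla)\vv)=(\pa_t\vv\cdot\nabla)\vv+(\vv\cdot\nabla)\pa_t\vv$. The second piece, after integration by parts and using $\op{div}\vv=0$, vanishes (or reduces to a harmless term), so the essential contribution is $\int_\Omega(\pa_t\vv\cdot\nabla)\vv\cdot\pa_t\vv\dx\le\|\pa_t\vv\|_{q}^2\|\nabla\vv\|_{q'}$ for a Hölder triple, or more efficiently $\le\|\pa_t\vv\|_{2}^{2(1-\lambda)}\|\nabla\pa_t\vv\|_2^{2\lambda}\|\nabla\vv\|_r$ via Gagliardo--Nirenberg on $\pa_t\vv$ in three dimensions. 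The factor $\|\nabla\pa_t\vv\|_2^{2\lambda}$ is then bounded by $\calJ_p(c,\vv)$ up to a $p$-dependent weight: here one uses that $(\overline\DD\vv)^{p(c)-2}\ge(1+\|\DD\vv\|_\infty^2)^{(p^--2)/2}$ pointwise is the wrong direction, so instead one bounds $\|\DD\pa_t\vv\|_2^2$ by interpolating $\calJ_p$ against a power of $\|\overline\DD\vv\|$ (this is exactly the type of weighted-unweighted conversion done for $\calI_p$ in Diening--R\r{u}\v zi\v cka, and it is where $\|\overline\DD\vv\|_{12/5}$ and $\|\pa_t\vv\|_2$ both reappear). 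Choosing $\lambda$ small enough that the power of $\calJ_p$ is strictly less than $1$, Young's inequality gives $\epsilon\calJ_p(c,\vv)$ plus $C(\|\overline\DD\vv\|_{12/5}^{R_2}+\|\pa_t\vv\|_2^{R_3}+1)$ with $R_2,R_3$ emerging from the conjugate Young exponents; one verifies $R_2>\frac{12}{5}$ and $R_3>2$.

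The main obstacle is the bookkeeping of exponents: one must simultaneously (i) keep the exponent on the top-order quantity ($\|\nabla^2\vv\|_{p^-}$ for the first estimate, $\|\DD\pa_t\vv\|_2^2$ weighted by the $p(c)$-power for the second) strictly subcritical so that Young's inequality produces an $\epsilon$-small multiple of $\calI_p$ resp.\ $\calJ_p$, (ii) ensure the residual exponents $R_1,R_2,R_3$ exceed the stated thresholds, and (iii) correctly handle the conversion between the weighted energies $\calI_p,\calJ_p$ and unweighted Sobolev norms of $\nabla^2\vv$ and $\DD\pa_t\vv$, which costs extra powers of $\|\overline\DD\vv\|$ through the elementary pointwise inequality $|\DD\pa_t\vv|^2\le(\overline\DD\vv)^{2-p(c)}\cdot(\overline\DD\vv)^{p(c)-2}|\DD\pa_t\vv|^2$ followed by Hölder with the variable exponent $\frac{2}{2-p(\cdot)}$. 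The borderline nature of $p^->\frac75$ enters precisely in making the Gagliardo--Nirenberg exponent $\theta$ satisfy $3\theta<p^-$ in the first estimate and an analogous constraint in the second; everything else is a routine (if tedious) chain of Hölder, Gagliardo--Nirenberg, Korn, and Young inequalities, and since the detailed computation is carried out in \cite{Diening2005} I would cite it for the explicit values of $R_1,R_2,R_3$ rather than reproduce it.
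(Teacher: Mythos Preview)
Your approach to the first estimate has a genuine gap. You propose the Gagliardo--Nirenberg interpolation $\|\nabla\vv\|_3\le C\|\nabla^2\vv\|_{p^-}^{\theta}\|\nabla\vv\|_{12/5}^{1-\theta}$ and then invoke Lemma~\ref{DDv,Dv} to trade $\|\nabla^2\vv\|_{p^-}^{p^-}$ for $\calI_p$. But compute the scaling: in three dimensions the relation $\tfrac13=\theta(\tfrac1{p^-}-\tfrac13)+(1-\theta)\tfrac5{12}$ forces $\theta=\tfrac{1/12}{3/4-1/p^-}$, which satisfies $\theta\le 1$ only when $p^-\ge\tfrac32$, and moreover the Young condition $3\theta<p^-$ you need is equivalent to $p^->\tfrac53$, not $p^->\tfrac75$. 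So your route misses the entire range $\tfrac75<p^-\le\tfrac53$.

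The argument in \cite{Diening2005} (which the paper simply cites here) does \emph{not} pass through $\|\nabla^2\vv\|_{p^-}$. Instead one works directly with $\eta:=(\overline{\DD}\vv)^{p^-/2}$: a direct computation gives $\|\nabla\eta\|_2^2\le C\calI_p(c,\vv)$, and Sobolev embedding $W^{1,2}\hookrightarrow L^6$ then yields $\|\overline{\DD}\vv\|_{3p^-}^{p^-}\le C\calI_p(c,\vv)+C\|\overline{\DD}\vv\|_{p^-}^{p^-}$ (this is exactly the manoeuvre appearing later in the paper as \eqref{highregularityDv}). Now interpolate $L^3$ between $L^{12/5}$ and $L^{3p^-}$: writing $\tfrac13=\tfrac{5\lambda}{12}+\tfrac{1-\lambda}{3p^-}$ one finds $1-\lambda=\tfrac{p^-}{5p^--4}$, so the exponent on $\calI_p$ in $\|\overline{\DD}\vv\|_3^3$ is $\tfrac{3(1-\lambda)}{p^-}=\tfrac{3}{5p^--4}$, and the requirement that this be $<1$ is \emph{precisely} $p^->\tfrac75$. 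This is where the threshold genuinely arises.

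Your sketch for the second estimate is essentially correct in outline: after noting $\int_\Omega(\vv\cdot\nabla)\pa_t\vv\cdot\pa_t\vv\dx=0$ by incompressibility, one handles $\int_\Omega(\pa_t\vv\cdot\nabla)\vv\cdot\pa_t\vv\dx$ via Gagliardo--Nirenberg on $\pa_t\vv$ together with the weighted--unweighted conversion $\|\DD\pa_t\vv\|_r\le C\calJ_p^{1/2}\|(\overline{\DD}\vv)^{(2-p^-)/2}\|_{2r/(2-r)}$ (stated later in the paper as Lemma~\ref{DDv Dpatv}), followed by Young's inequality. That part matches \cite{Diening2005}.
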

Next, the estimate for another term $\int_\Omega \pa_t \vv \cdot \Delta \vv \dx$ in \eqref{Ipv} is presented in the following lemma.
\begin{lemma} \label{patvdeltav}
    For $1<p^- \le 2$ and for sufficiently small $\epsilon>0$, there holds
       \[
    \int_\Omega \pa_t \vv \cdot \Delta \vv \dx \le C\|\pa_t \vv\|_2^\frac{8(p^--1)}{5p^--6} \calJ_p(c,\vv)^\frac{2-p^-}{5p^--6} + \epsilon(\calI_p(c,\vv) +1).
    \]
\end{lemma}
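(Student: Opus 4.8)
\textbf{Proof proposal for Lemma \ref{patvdeltav}.}

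The plan is to estimate $\int_\Omega \pa_t\vv\cdot\Delta\vv\dx$ by exploiting the interpolation between the quantity $\calJ_p(c,\vv)$ (which controls a weighted norm of $\pa_t\DD\vv$), the energy bound $\|\pa_t\vv\|_2$, and the elliptic regularity estimate for $\nabla^2\vv$ coming from Lemma \ref{DDv,Dv}. First I would rewrite the integral, integrating by parts if convenient, as roughly $\int_\Omega \pa_t\DD\vv : \DD\vv\dx$ up to harmless boundary-free terms on the periodic domain, so that both factors carry the natural weight structure. The key idea, following \cite{Diening2005}, is to insert the weight $(\overline{\DD}\vv)^{(p(c)-2)/2}$ and its inverse: write $\pa_t\DD\vv = (\overline{\DD}\vv)^{(p(c)-2)/2}\pa_t\DD\vv \cdot (\overline{\DD}\vv)^{(2-p(c))/2}$, so that by Cauchy--Schwarz one factor becomes $\calJ_p(c,\vv)^{1/2}$ and the remaining factor is a suitable $L^q$ norm of $(\overline{\DD}\vv)^{(2-p(c))/2}\,\DD\vv$, which by $p^+\le 2$ is controlled by $\|\overline{\DD}\vv\|_r^{\theta}$ for an appropriate exponent.

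Next I would use the smallness of the weight exponent: since $p^-\le p(c)\le p^+\le 2$, the factor $(\overline{\DD}\vv)^{(2-p(c))/2}$ grows at worst like $(\overline{\DD}\vv)^{(2-p^-)/2}$, and combined with the extra $\DD\vv$ this is bounded pointwise by $(\overline{\DD}\vv)^{(4-p^-)/2}$. One then needs an $L^q$ bound on this with $q$ determined by the Cauchy--Schwarz split; via Sobolev embedding in three dimensions and Lemma \ref{DDv,Dv}, such a norm is controlled by a combination of $\calI_p(c,\vv)$ and lower-order energy terms, but raised to a power strictly less than one so that Young's inequality produces the $\epsilon\calI_p(c,\vv)$ term. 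The exponents $\frac{8(p^--1)}{5p^--6}$, $\frac{2-p^-}{5p^--6}$ and the denominator $5p^--6$ should fall out of balancing the interpolation: one interpolates $\|\pa_t\vv\|$ between $L^2$ (energy) and the $\calJ_p$-controlled negative-weight $L^2$ space, uses the embedding $W^{2,p^-}\hookrightarrow W^{1,3p^-/(3-p^-)}$ (which requires $p^->\tfrac65$, consistent with the hypothesis $p^->\tfrac75$), and then applies Young's inequality with the pair of conjugate exponents that makes the $\calI_p$-power equal to one. The constraint $5p^->6$ is exactly what guarantees the interpolation exponents are positive and the Young split is admissible.

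The main obstacle I anticipate is the bookkeeping of exponents: one must track precisely how the weight $(\overline{\DD}\vv)^{(2-p(c))/2}$, the Sobolev embedding loss, and the interpolation between $\|\pa_t\vv\|_2$ and $\calJ_p(c,\vv)^{1/2}$ combine, and verify that with $p^->\tfrac65$ (a fortiori $p^->\tfrac75$) the resulting power of $\calI_p(c,\vv)$ can be pushed below one so that Young's inequality yields the clean form $\epsilon(\calI_p(c,\vv)+1)$ plus the stated product term. A secondary technical point is justifying the integration by parts and the use of Lemma \ref{DDv,Dv} for the Galerkin approximants, but since these are sufficiently smooth (by \eqref{regularity of cn} and $\vv_\delta^n\in W^{2,2}(I;A_n)$) this is routine. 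Since the statement says the proof can be found in \cite{Diening2005}, I would in the write-up simply reduce to the relevant estimate there after inserting the variable-exponent weight, checking that the argument is insensitive to the $c$-dependence of $p$ (which enters only through the uniform bounds $p^-\le p(c)\le p^+$ and the Lipschitz control already built into \eqref{P1}).
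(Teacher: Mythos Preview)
The paper does not supply its own proof of this lemma; it is simply lifted from \cite{Diening2005}. Your proposal correctly identifies the ingredients (weight insertion, H\"older, interpolation, Young, Lemma~\ref{DDv,Dv}) and rightly observes that the variable exponent $p(c)$ plays no role beyond the uniform bounds $p^-\le p(c)\le p^+$. However, the concrete route you sketch has a structural gap.

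After integrating by parts to $\int_\Omega \pa_t\DD\vv:\DD\vv\dx$ and applying the weighted Cauchy--Schwarz split you describe, the bound you obtain is
\[
\calJ_p(c,\vv)^{1/2}\,\big\|(\overline{\DD}\vv)^{(2-p(c))/2}\DD\vv\big\|_2 \le \calJ_p(c,\vv)^{1/2}\,\|\overline{\DD}\vv\|_{4-p^-}^{(4-p^-)/2}.
\]
The second factor is a \emph{first-order} norm of $\vv$ only; it carries no dependence on $\pa_t\vv$ whatsoever, so there is no mechanism left to produce the factor $\|\pa_t\vv\|_2^{8(p^--1)/(5p^--6)}$ required by the statement. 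Your later remark about ``interpolating $\|\pa_t\vv\|$ between $L^2$ and the $\calJ_p$-controlled space'' is therefore inconsistent with the split you already performed. The argument in \cite{Diening2005} does \emph{not} integrate by parts: one keeps $\pa_t\vv$ undifferentiated, applies H\"older with exponents $(p^-)'$ and $p^-$ to obtain $\|\pa_t\vv\|_{(p^-)'}\|\nabla^2\vv\|_{p^-}$, bounds the second factor by $C\calI_p(c,\vv)^{1/p^-}+C$ via Lemma~\ref{DDv,Dv} (this is where the $\epsilon\calI_p$ term comes from after Young), and only then interpolates $\|\pa_t\vv\|_{(p^-)'}$ between $\|\pa_t\vv\|_2$ and a higher Sobolev norm of $\pa_t\vv$ controlled through $\calJ_p$ (cf.\ Lemma~\ref{DDv Dpatv}). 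The exponent bookkeeping you anticipate is indeed the main labour, but it must be carried out along this route, not the integrated-by-parts one.
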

Now, it remains to estimate $|{\rm A_2}|$ and $|{\rm B_2}|$ in \eqref{Ipv} and \eqref{vt22} respectively, in terms of $\|\overline{\DD}\vv\|_\frac{12}{5}$. This part constitutes one of the main novel contributions of the present work in comparison with \cite{Diening2005}. The novelty stems from the fact that, in our system, the power-law index $p$ depends on the concentration $c$. As a result, differentiating the Cauchy stress tensor $\SSS$ yields additional terms involving $\nabla c$ and $\pa_tc$, the treatment of which requires further technical considerations. The details are presented in the following lemma.
\begin{lemma} \label{A2B2}
    Let {$q>3$}. Then there exist constants {$R_4,R_5,R_6>q$ and $R_7,R_8 > \frac{12}{5}$} such that for sufficiently small $\epsilon>0$, there hold
    \begin{align}
    &|{\rm{A_2}}| \le C\|\nabla c\|_q^{R_4} + C \|\overline{\DD}\vv\|_\frac{12}{5}^{R_7} + \epsilon \calI_p(c,\vv) + C, \label{A2est}\\
    &{|{\rm{B_2}}| \le C\|\pa_t c\|_q^{R_5} + C\|\nabla c\|_q^{R_6} + C \|\overline{\DD}\vv\|_\frac{12}{5}^{R_8} + \epsilon \calI_p(c,\vv) + \epsilon \calJ_p(c,\vv) + C.} \label{B2est}
    \end{align}
\end{lemma}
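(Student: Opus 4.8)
The plan is to estimate the two terms $\mathrm{A_2}$ and $\mathrm{B_2}$ by exploiting the third bound in \eqref{P1}, namely $\left|\frac{\pa \SSS(c,\DD)}{\pa c}\right| \le K_3 (1+|\DD\vv|^2)^{\frac{p(c)-1}{2}}\log(2+|\DD\vv|)$, together with the energy $\calI_p$, the interpolation inequality in Lemma \ref{DDv,Dv}, and absorption of the logarithm into a small power of $\overline{\DD}\vv$. First I would treat $\mathrm{A_2} = \int_\Omega \pa_{x_k} c \left(\frac{\pa\SSS(c,\DD)}{\pa c}:\pa_{x_k}\DD\vv\right)\dx$. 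Using \eqref{P1} and $\log(2+|\DD\vv|) \le C_\sigma (\overline{\DD}\vv)^{\sigma}$ for any $\sigma>0$, one bounds the integrand by $C|\nabla c|\,(\overline{\DD}\vv)^{p(c)-1+\sigma}|\nabla\DD\vv|$. Writing $(\overline{\DD}\vv)^{p(c)-1+\sigma}|\nabla\DD\vv| = (\overline{\DD}\vv)^{\frac{p(c)-2}{2}}|\nabla\DD\vv| \cdot (\overline{\DD}\vv)^{\frac{p(c)}{2}+\sigma}$ and applying the Cauchy--Schwarz inequality in the form $ab \le \epsilon a^2 + C\epsilon^{-1}b^2$, the first factor squared integrates to (a multiple of) $\calI_p(c,\vv)$, which gets absorbed, while the remaining term is $C\int_\Omega |\nabla c|^2 (\overline{\DD}\vv)^{p(c)+2\sigma}\dx$. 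I would then apply Hölder's inequality with exponents $\frac{q}{2}$ and $\frac{q}{q-2}$ (here $q>3$, so $\frac{q}{q-2} < 3$): this gives $C\|\nabla c\|_q^2 \,\big\|(\overline{\DD}\vv)^{p(c)+2\sigma}\big\|_{\frac{q}{q-2}} \le C\|\nabla c\|_q^2 \,\|\overline{\DD}\vv\|_{\frac{(p^++2\sigma)q}{q-2}}^{p^++2\sigma}$. The key point is that for $q>3$ and $p^+\le 2$, choosing $\sigma$ small makes $\frac{(p^++2\sigma)q}{q-2} < \frac{12}{5}$ provided $q$ is not too close to $3$ — here one uses $p^->\frac75$ to see that the exponent $s=\frac{12}{5}$ in Lemma \ref{Dvss} controls $\|\overline{\DD}\vv\|_r$ for the relevant $r$; more precisely, $\|\overline{\DD}\vv\|_r \le C$ on the approximate level follows from \eqref{best1} combined with Sobolev embedding of $W^{2,p^-}$ once $\calI_p$ is controlled, but since we are at the \emph{a priori} stage we instead keep $\|\overline{\DD}\vv\|_{\frac{12}{5}}$ explicit. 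Using Sobolev embedding $W^{1,\frac{12}{5}}\hookrightarrow L^r$ for the appropriate $r<\frac{12}{5}\cdot\frac{3}{3-12/5} = 12$ (and in fact the exponent $\frac{(p^++2\sigma)q}{q-2}$ is bounded by such an $r$), together with Young's inequality to split off the $\|\nabla c\|_q$ factor from the $\|\overline{\DD}\vv\|_{12/5}$ factor, produces the claimed form $C\|\nabla c\|_q^{R_4} + C\|\overline{\DD}\vv\|_{12/5}^{R_7} + \epsilon\calI_p(c,\vv) + C$ with $R_4>q$ and $R_7>\frac{12}{5}$; the $\epsilon\calI_p$ term may also pick up a contribution via Lemma \ref{DDv,Dv} when converting $\|\overline{\DD}\vv\|_{12/5}^{p^++2\sigma}$ down from a higher Sobolev norm, but this is harmless.

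For $\mathrm{B_2} = \int_\Omega \pa_t c\left(\frac{\pa\SSS(c,\DD)}{\pa c}:\DD\pa_t\vv\right)\dx$ the structure is analogous but now the velocity factor is $\DD\pa_t\vv$ rather than $\nabla\DD\vv$, so the natural energy to produce is $\calJ_p$ instead of $\calI_p$. Bounding the integrand via \eqref{P1} and the logarithm trick by $C|\pa_t c|\,(\overline{\DD}\vv)^{p(c)-1+\sigma}|\DD\pa_t\vv|$ and splitting $(\overline{\DD}\vv)^{p(c)-1+\sigma} = (\overline{\DD}\vv)^{\frac{p(c)-2}{2}}\cdot(\overline{\DD}\vv)^{\frac{p(c)}{2}+\sigma}$, the Cauchy--Schwarz inequality gives $\epsilon\calJ_p(c,\vv) + C\int_\Omega |\pa_t c|^2 (\overline{\DD}\vv)^{p(c)+2\sigma}\dx$. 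Hölder with $\frac{q}{2}, \frac{q}{q-2}$ yields $C\|\pa_t c\|_q^2\,\|\overline{\DD}\vv\|_{\frac{(p^++2\sigma)q}{q-2}}^{p^++2\sigma}$, and then the same interpolation/Sobolev argument as above — using Lemma \ref{DDv,Dv} to pass from a norm above $\frac{12}{5}$ down to $\calI_p$ plus $\|\overline{\DD}\vv\|_{12/5}$, followed by Young's inequality to decouple $\|\pa_t c\|_q$ from $\|\overline{\DD}\vv\|_{12/5}$ — delivers $C\|\pa_t c\|_q^{R_5} + C\|\nabla c\|_q^{R_6} + C\|\overline{\DD}\vv\|_{12/5}^{R_8} + \epsilon\calI_p(c,\vv) + \epsilon\calJ_p(c,\vv) + C$. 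The $\|\nabla c\|_q^{R_6}$ term enters precisely through the $\calI_p$-conversion step (controlling $\|\nabla^2\vv\|_{p^-}$ and hence the intermediate Lebesgue norm of $\overline{\DD}\vv$), mirroring the role of $\mathrm{A_2}$; alternatively it arises if one interpolates $\|\overline{\DD}\vv\|_r$ for $r$ slightly above $\frac{12}{5}$ between $\|\overline{\DD}\vv\|_{12/5}$ and a higher norm bounded via $\calI_p$, which by Lemma \ref{DDv,Dv} costs a $\|\overline{\DD}\vv\|_{p^-}$ term controlled by \eqref{best1} and a $\calI_p$ term.

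The main obstacle — and the place where the hypothesis $p^->\frac75$ and the specific choice $s=\frac{12}{5}$ are genuinely used — is the bookkeeping in the Hölder/interpolation step: one must check that for $q>3$ there is enough room to choose $\sigma>0$ small so that the Lebesgue exponent $\frac{(p^++2\sigma)q}{q-2}$ appearing on the $\overline{\DD}\vv$ factor is strictly below the Sobolev-interpolation threshold determined by $W^{1,12/5}\hookrightarrow L^{12}$ in three dimensions, while simultaneously the power of $\overline{\DD}\vv$ that survives, after using Young's inequality to peel off $\|\nabla c\|_q$ or $\|\pa_t c\|_q$, is of the form $\|\overline{\DD}\vv\|_{12/5}^{R}$ with $R$ a genuine (finite) exponent $>\frac{12}{5}$ — this is what makes the subsequent application of Lemma \ref{Dvss} (which only tolerates $\|\overline{\DD}\vv\|_{2s-p^-}^{2s-p^-} = \|\overline{\DD}\vv\|_{24/5-p^-}^{24/5-p^-}$ on its right side, and $\frac{24}{5}-p^- < \frac{24}{5}-\frac75 = \frac{17}{5}$, comfortably below $12$) and ultimately the local Grönwall Lemma \ref{Gronwalllemma} work. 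Since the logarithmic factor is the only essentially new feature relative to \cite{Diening2005}, and it is absorbed at the cost of an arbitrarily small $\sigma$, no new smallness or structural condition on $p$ beyond what is already imposed is needed; the exponents $R_4,\dots,R_8$ can be written down explicitly in terms of $p^-,p^+,q$ but their precise values are immaterial for the sequel, so I would not compute them in detail.
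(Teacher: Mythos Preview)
Your overall architecture is right (split off $(\overline{\DD}\vv)^{\frac{p(c)-2}{2}}|\nabla\DD\vv|$ to produce $\epsilon\calI_p$, and $(\overline{\DD}\vv)^{\frac{p(c)-2}{2}}|\DD\pa_t\vv|$ to produce $\epsilon\calJ_p$), but the step where you pass from $\int |\nabla c|^2(\overline{\DD}\vv)^{p(c)+2\sigma}\dx$ to powers of $\|\overline{\DD}\vv\|_{12/5}$ has a genuine gap. After H\"older with exponents $\tfrac{q}{2},\tfrac{q}{q-2}$ you are left with $\|\overline{\DD}\vv\|_{r}$ for $r=\frac{(p^++2\sigma)q}{q-2}$, which for $q$ close to $3$ and $p^+=2$ approaches $6$. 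Your appeal to ``Sobolev embedding $W^{1,12/5}\hookrightarrow L^r$'' is a category error: that embedding controls $\|\vv\|_r$, not $\|\overline{\DD}\vv\|_r\sim\|\nabla\vv\|_r$. The natural fix---use Lemma~\ref{DDv,Dv} or $\|(\overline{\DD}\vv)^{p^-/2}\|_6^2\le C\calI_p+C$ to reach $\|\overline{\DD}\vv\|_{3p^-}$ and interpolate---does not close either, since $3p^-$ can be as small as $\tfrac{21}{5}<6$ when $p^-$ is near $\tfrac75$.

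The paper circumvents this by keeping the \emph{variable} exponent throughout: it sets $\eta:=(\overline{\DD}\vv)^{p(c)/2}$, applies H\"older with four factors $q,\alpha,\beta,2$ (so that $\|\eta\|_\beta$ is exactly what is needed, $\beta\in(2,6)$ for any $q>3$), and then uses Gagliardo--Nirenberg on $\eta$ itself. The point you are missing is that $\nabla\eta$ has two pieces: one controlled by $\calI_p^{1/2}$, and a second piece $\sim |\nabla c|\,\eta\,\log(\overline{\DD}\vv)$ coming from differentiating $p(c)$. This makes the Gagliardo--Nirenberg bound for $\|\eta\|_\beta$ self-referential, and it is closed by Young's inequality because the recurring power $2(1-\theta)<2$. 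It is precisely this $\nabla c$-contribution to $\nabla\eta$ that generates the $\|\nabla c\|_q^{R_6}$ term in the $\mathrm{B_2}$ estimate---not, as you suggest, an ``$\calI_p$-conversion step'' via Lemma~\ref{DDv,Dv}. Without this variable-exponent bootstrap the lemma does not go through for the full range $q>3$.
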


\begin{proof}
Let us first estimate $|{\rm A_2}|$. By \eqref{P1} and H\"older's inequality, we have
\begin{equation} \label{sA2}
\begin{split}
    |{\rm A_2}| &\le C \int_\Omega |\nabla c |  |\overline{\DD} \vv|^{p(c)-1} \log (2 + |\DD \vv|) |\nabla \DD \vv| \dx \\
    &= C \int_\Omega |\nabla c| \log(2+|\DD \vv|)  |\overline{\DD }\vv|^\frac{p(c)}{2} \big|\overline{\DD}\vv^\frac{p(c)-2}{2} \nabla \DD \vv  \big| \dx \\
    &\le C \|\nabla c\|_q \| \log(2+ |\DD \vv|) \|_\alpha \| \overline{\DD}\vv^\frac{p(c)}{2} \|_\beta \|  \overline{\DD}\vv^\frac{p(c)-2}{2} \nabla \DD \vv \|_2,
\end{split}    
\end{equation}
where 
\begin{equation} \label{qalphabeta2}
    1 = \frac{1}{q} + \frac{1}{\alpha} + \frac{1}{\beta} + \frac{1}{2}
\end{equation}
with $\alpha>2$ large enough and $\beta>2$ being chosen later. By Young's inequality, we obtain
\begin{equation} \label{A2}
|{\rm{A_2}}| \le C \|\nabla c\|_q^2 \|\log(2+ |\DD\vv|)\|_\alpha^2\|\overline{\DD}\vv^\frac{p(c)}{2}\|_\beta^2 + \epsilon \calI_p(c,\vv).
\end{equation}
Now we shall define $\eta := \overline{\DD}\vv^\frac{p(c)}{2}$, and it follows that
\begin{equation} \label{gradetaL2}
\|\nabla \eta\|_2 \le C \calI_p(c,\vv)^\frac{1}{2} + C\|\nabla c\|_q \|\eta\|_\beta \|\log(2+ |\DD\vv|)\|_\alpha.
\end{equation}
Furthermore, by the Gagliardo--Nirenberg inequality, we have
\begin{equation} \label{eta beta}
\|\eta\|_\beta^2 \le C\|\eta\|_2^{2\theta} \|\nabla \eta\|_2^{2(1-\theta)} + C\|\eta\|_2^2,
\end{equation}
where $\theta = \frac{6-\beta}{2\beta}$ and $\beta \in (2,6)$ is chosen to satisfy \eqref{qalphabeta2}. Inserting \eqref{gradetaL2} to \eqref{eta beta}, we derive
\[
\|\eta\|_\beta^2 \le C \|\eta\|_2^{2\theta} \calI_p(c,\vv)^{1-\theta} + C \|\eta\|_2^{2\theta}\|\nabla c \|_q^{2(1-\theta)}\|\eta\|_\beta^{2(1-\theta)} \|\log(2+|\DD\vv|)\|_\alpha^{2(1-\theta)} + C \|\eta\|_2^2.
\]
Applying Young's inequality yields
\begin{equation} \label{eta beta2}
\|\eta\|_\beta^2 \le C \|\eta\|_2^{2\theta} \calI_p(c,\vv)^{1-\theta} + C \|\eta\|_2^2\|\nabla c \|_q^{\frac{2-2\theta}{\theta}}\|\log(2+|\DD\vv|)\|_\alpha^{\frac{2-2\theta}{\theta}} + C \|\eta\|_2^2.
\end{equation}
Therefore, by inserting \eqref{eta beta2} to \eqref{A2}, we have
\begin{align*}
    |{\rm A_2}| & \le C\|\nabla c\|_q^2\|\log(2+ |\DD\vv|)\|_\alpha^2\|\eta\|_2^{2\theta} \calI_p(c,\vv)^{1-\theta} + C \|\nabla c\|_q^\frac{2}{\theta} \|\log(2+ |\DD\vv|)\|_\alpha^\frac{2}{\theta} \|\eta\|_2^2 \\
    & \hspace{0.3cm} + C \|\nabla c\|_q^2 \|\log(2+ |\DD\vv|)\|_\alpha^2\|\eta\|_2^2 + \epsilon \calI_p(c,\vv).
\end{align*}
Then we apply Young's inequality to get
\[
|{\rm A_2}| \le C(1 + \|\nabla c\|_q^\frac{2}{\theta} \|\log(2+ |\DD\vv|)\|_\alpha^\frac{2}{\theta} ) \|\eta \|_2^2 + \epsilon \calI_p(c,\vv).
\]
Since $p(c)\le p^+ \le 2$, we see that $\|\eta\|_2^2 = \|\overline{\DD}\vv^\frac{p(c)}{2}\|_2^2 \le \|\overline{\DD}\vv\|_2^2 \le C \|\overline{\DD}\vv\|_\frac{12}{5}^2$.
Also, if we use the fact that $\log(2+ |\DD\vv|) \le C (1+ |\DD\vv|)^\frac{12}{5\alpha}$, we obtain 
\[
|{\rm A_2}| \le C \Big(1 + \|\nabla c\|_q^\frac{2}{\theta} \|\overline{\DD}\vv\|_\frac{12}{5}^\frac{24}{5\alpha\theta}\Big)\|\overline{\DD}\vv\|_\frac{12}{5}^2 + \epsilon\calI_p(c,\vv).
\]
Finally, the application of Young's inequality yields the desired result \eqref{A2est}.

Next, we shall estimate the second term $|{\rm B_2}|$. Analogously to \eqref{sA2}-\eqref{A2}, we have
\[
|{\rm B_2}| \le C \|\pa_t c\|_q^2 \|\log(2+ |\DD\vv|)\|_\alpha^2\|\overline{\DD}\vv^\frac{p(c)}{2}\|_\beta^2 + \epsilon \calJ_p(c,\vv),
\]
where $\alpha>2$ large enough and $\beta>2$ chosen as before. By using the same argument as above, we obtain
\begin{align*}
    |{\rm B_2}| & \le C\|\pa_t c\|_q^2\|\log(2+ |\DD\vv|)\|_\alpha^2\|\eta\|_2^{2\theta} \calI_p(c,\vv)^{1-\theta} + C \|\pa_t c\|_q^2  \|\nabla c\|_q^\frac{2-2\theta}{\theta} \|\log(2+ |\DD\vv|)\|_\alpha^\frac{2}{\theta} \|\eta\|_2^2 \\
    & \hspace{0.4cm} + C \|\pa_t c\|_q^2 \|\log(2+ |\DD\vv|)\|_\alpha^2\|\eta\|_2^2 + \epsilon \calJ_p(c,\vv) \\
    & \le C(1+\|\pa_t c\|_q^\frac{2}{\theta} \|\log(2+ |\DD\vv|)\|_\alpha^\frac{2}{\theta}) \|\eta\|_2^2 + C \|\pa_t c\|_q^2 \|\nabla c\|_q^\frac{2-2\theta}{\theta}\|\log(2+ |\DD\vv|)\|_\alpha^\frac{2}{\theta}\|\eta\|_2^2 \\
    & \hspace{0.4cm} + \epsilon \calI_p(c,\vv) + \epsilon\calJ_p(c,\vv).
\end{align*}
As a final step, we use $\|\eta\|_2^2 \le C \|\overline{\DD}\vv\|_\frac{12}{5}^2$ and $\log(2+ |\DD\vv|) \le C (1+ |\DD\vv|)^\frac{12}{5\alpha}$ again, and apply Young's inequality to get the desired result \eqref{B2est}.

\end{proof}

Finally, we can easily estimate the remaining terms regarding the external forcing in \eqref{Ipv} and \eqref{vt22}. Indeed, we shall apply H\"older's inequality, Young's inequality and Korn's inequality (see, e.g., \cite{malek}) to get
\begin{align*}
    \int_\Omega |\nabla\fff \cdot \nabla \vv| \dx &\le C\|\nabla \fff\|_2\|\nabla \vv\|_\frac{12}{5} \le C\|\nabla \fff\|_2^2 +  C\|\nabla \vv\|^2_\frac{12}{5} \le C + C \|\DD\vv\|_\frac{12}{5}^2, \\
    \int_\Omega |\pa_t \fff\cdot  \pa_t \vv | \dx &\le \|\pa_t \fff\|_2\|\pa_t \vv\|_2 \le C \|\pa_t \fff\|_2^2 + C\|\pa_t \vv\|^2_2.
\end{align*}
Thus, we can deduce from \eqref{Ipv} and \eqref{vt22} that
\begin{equation} \label{Ipv2}
 \calI_p(c,\vv) \le C\left(1+ \|\nabla c\|_q^{R_4} + \|\overline{\DD}\vv\|_\frac{12}{5}^{\max\{R_1,R_{7}\}} + \|\pa_t \vv\|_2^\frac{8(p^--1)}{5p^--6} \calJ_p(c,\vv)^\frac{2-p^-}{5p^--6}\right)
\end{equation}
and
\begin{equation} \label{vt22_2}
\frac{\rm d}{dt} \|\pa_t \vv\|_2^2 + C\calJ_p(c,\vv) \le C\left(1+ \|\pa_t \fff\|_2 + \|\pa_t c\|_q^{R_5} + \|\nabla c\|_q^{R_6} + \|\overline{\DD}\vv\|_\frac{12}{5}^{\max\{R_2,R_{8}\}}+\|\pa_t \vv\|_2^{R_{3}}\right) + \epsilon \calI_p(c,\vv).
\end{equation}

Now, based on \eqref{Ipv2} and \eqref{vt22_2}, we make use of Lemma~\ref{Dvss} to derive an auxiliary inequality, which serves as a key step in the application of the local Gr\"onwall inequality. We begin with the following lemma, the proof of which can be found in \cite{Diening2005}.
\begin{lemma} \label{DV24/5}
   There exists a constant $R_{9}>\frac{12}{5}$ such that for sufficiently small $\epsilon>0$,
    \begin{equation} \label{Dv24/5eqn}
    \|\overline{\DD}\vv\|_{\frac{24}{5}-p^-}^{\frac{24}{5}-p^-} \le C\|\overline{\DD}\vv\|_\frac{12}{5}^{R_{9}} + \epsilon(\calI_p(c,\vv)^\frac{5}{3} +1).
    \end{equation}
\end{lemma}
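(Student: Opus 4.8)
The plan is to bound the left-hand side by a constant-exponent weighted second-derivative energy, turn this into an $L^{3p^-}$-bound for $\overline{\DD}\vv$ through a Sobolev embedding, and then interpolate down to the target exponent $r:=\frac{24}{5}-p^-$; the hypothesis $p^->\frac75$ turns out to be exactly what keeps the resulting power of $\calI_p$ below $\frac53$, so that it can be absorbed by $\epsilon\,\calI_p^{5/3}$ via Young's inequality. For the first step, set $\eta:=(\overline{\DD}\vv)^{p^-/2}=(1+|\DD\vv|^2)^{p^-/4}$. Since $\overline{\DD}\vv\ge1$ and $p^-\le p(\cdot)$, the elementary pointwise inequality $(\overline{\DD}\vv)^{p^--2}\le(\overline{\DD}\vv)^{p(c)-2}$ holds; combined with $|\nabla\overline{\DD}\vv|\le|\nabla\DD\vv|$ and the chain rule, it gives
\[
\|\nabla\eta\|_2^2\le C\int_\Omega(\overline{\DD}\vv)^{p^--2}\,|\nabla\DD\vv|^2\dx\le C\,\calI_p(c,\vv).
\]
Note that, in contrast to Lemma~\ref{A2B2}, no term involving $\nabla c$ is produced here: we manipulate only the energy $\calI_p$, and the concentration dependence is eliminated by the monotonicity of $t\mapsto t^{p^--2}$. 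Also, as $|\Omega|=1$ and $p^-\le\frac{12}{5}$, we have $\|\eta\|_2=\|\overline{\DD}\vv\|_{p^-}^{p^-/2}\le\|\overline{\DD}\vv\|_{\frac{12}{5}}^{p^-/2}$.

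Next, by the three-dimensional Sobolev embedding $W^{1,2}(\Omega)\hookrightarrow L^6(\Omega)$ together with $\|\eta\|_6^6=\|\overline{\DD}\vv\|_{3p^-}^{3p^-}$, the bound above yields
\[
\|\overline{\DD}\vv\|_{3p^-}^{3p^-}\le C\bigl(\|\nabla\eta\|_2+\|\eta\|_2\bigr)^6\le C\,\calI_p(c,\vv)^3+C\|\overline{\DD}\vv\|_{\frac{12}{5}}^{3p^-}.
\]
Since $\frac75<p^-\le2$ we have $\frac{12}{5}<r<3p^-$, so the interpolation exponent $\mu\in(0,1)$ defined by $\frac1r=\frac{1-\mu}{12/5}+\frac{\mu}{3p^-}$ is admissible, and Hölder's inequality gives $\|\overline{\DD}\vv\|_r\le\|\overline{\DD}\vv\|_{\frac{12}{5}}^{1-\mu}\|\overline{\DD}\vv\|_{3p^-}^{\mu}$. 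Raising to the $r$-th power and inserting the previous display produces
\[
\|\overline{\DD}\vv\|_r^r\le C\,\calI_p(c,\vv)^{\frac{\mu r}{p^-}}\,\|\overline{\DD}\vv\|_{\frac{12}{5}}^{(1-\mu)r}+C\|\overline{\DD}\vv\|_{\frac{12}{5}}^{r},
\]
and a short computation gives $\frac{\mu r}{p^-}=\frac{5p^--12}{4-5p^-}$, which equals $\frac53$ at $p^-=\frac75$ and is strictly decreasing in $p^-$; hence $\frac{\mu r}{p^-}<\frac53$ for every $p^-\in(\frac75,2]$. Therefore Young's inequality absorbs the first term on the right into $\epsilon\,\calI_p(c,\vv)^{5/3}+C_\epsilon\|\overline{\DD}\vv\|_{\frac{12}{5}}^{R_9}$, where $R_9$ is the largest exponent of $\|\overline{\DD}\vv\|_{\frac{12}{5}}$ so obtained; since $R_9\ge r>\frac{12}{5}$, collecting the terms and absorbing the harmless additive constants into the $\epsilon(\cdots+1)$ term yields \eqref{Dv24/5eqn}.

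The only genuinely delicate point — and the main obstacle — is precisely this arithmetic: the power of $\calI_p$ emerging from the interpolation is $\frac{5p^--12}{4-5p^-}$, and demanding it to be below $\frac53$ is \emph{exactly} the assumption $p^->\frac75$. A coarser choice of endpoints, e.g.\ interpolating $\eta$ between $L^2(\Omega)$ and $L^6(\Omega)$ rather than interpolating $\overline{\DD}\vv$ between $L^{12/5}(\Omega)$ and $L^{3p^-}(\Omega)$, yields a strictly larger power of $\calI_p$ and would force a more restrictive lower bound on $p^-$. Finally, one should keep in mind that the estimate is carried out at the Galerkin level, where $\vv$ is smooth, so that $\eta\in W^{1,2}(\Omega)$ and the chain rule are legitimate, and that all the constants remain independent of $n$ and $\delta$.
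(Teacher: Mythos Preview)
Your argument is correct and follows essentially the same route as the one the paper cites from \cite{Diening2005}: pass to $\eta=(\overline{\DD}\vv)^{p^-/2}$, use the Sobolev embedding $W^{1,2}\hookrightarrow L^6$ together with $\|\nabla\eta\|_2^2\le C\,\calI_p(c,\vv)$ (which the paper itself records later in \eqref{highregularityDv}), interpolate $\|\overline{\DD}\vv\|_{24/5-p^-}$ between $L^{12/5}$ and $L^{3p^-}$, and check that the resulting exponent $\frac{12-5p^-}{5p^--4}$ on $\calI_p$ drops below $\frac{5}{3}$ precisely when $p^->\frac{7}{5}$. The only cosmetic remark is that the ``harmless additive constants'' are more naturally absorbed into $C\|\overline{\DD}\vv\|_{12/5}^{R_9}$ (using $\overline{\DD}\vv\ge 1$) than into $\epsilon(\cdots+1)$.
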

We recall that one of the main advantages of applying Lemma~\ref{Dvss} with \( s = \frac{12}{5} \) is that it allows us to express the subsequent inequalities in terms of \( \|\overline{\DD}\vv\|_{\frac{12}{5}} \). However, this comes at the expense of introducing an unfavorable term, namely \( \|\overline{\DD}\vv\|_{\frac{24}{5} - p^-}^{\frac{24}{5} - p^-} \). To estimate this term, we observe, as shown in Lemma~\ref{DV24/5}, that the right-hand side involves the \( \frac{5}{3} \)-power of the energy functional \( \mathcal{I}_p(c, \vv) \). Accordingly, we raise both sides of \eqref{Ipv2} to the power \( \frac{5}{3} \) and apply Young’s inequality to derive the desired estimate, which is stated in the following lemma.

\begin{lemma}
 There exist constants $R_{10}>q$, $R_{11}>\frac{12}{5}$ and $R_{12}>2$ such that for sufficiently small $\epsilon>0$,
\begin{equation} \label{Ip4/3}
\calI_p(c,\vv)^\frac{5}{3} \le C(1+ \|\nabla c\|_q^{R_{10}} + \|\overline{\DD}\vv\|_\frac{12}{5}^{R_{11}} + \|\pa_t \vv\|_2^{R_{12}}) + \epsilon \calJ_p(c,\vv).
\end{equation}
\end{lemma}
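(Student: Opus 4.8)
The plan is to read off \eqref{Ip4/3} from the a priori inequality \eqref{Ipv2} by raising it to the power $\frac53$ and then absorbing the resulting term involving $\calJ_p(c,\vv)$ via Young's inequality. First I would invoke the elementary fact that $t\mapsto t^{5/3}$ is subadditive up to a multiplicative constant, i.e. $(a_1+a_2+a_3+a_4)^{5/3}\le 4^{2/3}\sum_i a_i^{5/3}$ for nonnegative $a_i$, and apply it to the four terms on the right-hand side of \eqref{Ipv2}. This yields
\[
\calI_p(c,\vv)^{5/3}\le C\Big(1+\|\nabla c\|_q^{\frac53 R_4}+\|\overline{\DD}\vv\|_{\frac{12}{5}}^{\frac53\max\{R_1,R_7\}}+\|\pa_t\vv\|_2^{\frac{40(p^--1)}{3(5p^--6)}}\,\calJ_p(c,\vv)^{\frac{5(2-p^-)}{3(5p^--6)}}\Big).
\]
The first three terms are already in the desired form: taking $R_{10}:=\frac53 R_4$ gives $R_{10}>q$ because $R_4>q$, and $R_{11}:=\frac53\max\{R_1,R_7\}$ gives $R_{11}>\frac{12}{5}$ because $R_1,R_7>\frac{12}{5}$.

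The only term that requires attention is the last one, a product of a power of $\|\pa_t\vv\|_2$ with $\calJ_p(c,\vv)^\gamma$, where $\gamma:=\frac{5(2-p^-)}{3(5p^--6)}$. The crucial — and in fact the only nontrivial — point is that $\gamma<1$; indeed $\gamma<1\iff 5(2-p^-)<3(5p^--6)\iff 20p^->28\iff p^->\frac75$, which is exactly the standing assumption of Theorem \ref{maintheorem}. Granting $\gamma<1$ (and noting $5p^--6>0$ as well, since $p^->\frac75$), I would apply Young's inequality with the conjugate pair $\big(\tfrac1\gamma,\tfrac1{1-\gamma}\big)$ to obtain
\[
\|\pa_t\vv\|_2^{\frac{40(p^--1)}{3(5p^--6)}}\,\calJ_p(c,\vv)^{\gamma}\le \epsilon\,\calJ_p(c,\vv)+C\,\|\pa_t\vv\|_2^{R_{12}},\qquad R_{12}:=\frac{1}{1-\gamma}\cdot\frac{40(p^--1)}{3(5p^--6)}.
\]
A direct computation gives $1-\gamma=\frac{20p^--28}{3(5p^--6)}$, hence $R_{12}=\frac{40(p^--1)}{20p^--28}=\frac{10(p^--1)}{5p^--7}$, and $R_{12}>2$ reduces to $10(p^--1)>2(5p^--7)$, i.e. $-10>-14$, which holds for every admissible $p^-$. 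Substituting this bound back completes the proof of \eqref{Ip4/3} with $R_{10},R_{11},R_{12}$ as above.

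There is no serious obstacle in this lemma — it is pure bookkeeping on top of \eqref{Ipv2} — but the one step one must check carefully is that the exponent $\gamma$ on $\calJ_p(c,\vv)$ produced by the $\frac53$-power is strictly less than $1$, since only then can it be absorbed into $\epsilon\,\calJ_p(c,\vv)$. This is precisely the inequality $p^->\frac75$, and it is the mechanism by which the threshold $\frac75$ propagates into the hypotheses of Theorem \ref{maintheorem}. Finally, I would note that the $\epsilon$ appearing in \eqref{Ip4/3} is a fresh small parameter coming from this last Young inequality, to be chosen later small relative to the coefficient of $\calJ_p(c,\vv)$ on the left-hand side of \eqref{vt22_2}.
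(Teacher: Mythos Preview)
Your proposal is correct and follows exactly the paper's approach: the paper's proof consists of the single sentence ``we raise both sides of \eqref{Ipv2} to the power $\frac{5}{3}$ and apply Young's inequality,'' and you have carried this out in full detail, including the verification that the exponent $\gamma=\frac{5(2-p^-)}{3(5p^--6)}$ on $\calJ_p(c,\vv)$ is strictly less than $1$ precisely when $p^->\frac{7}{5}$. Your explicit computations of $R_{10},R_{11},R_{12}$ and the check that $R_{12}>2$ are all correct.
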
 
As we can see in Lemma \ref{A2B2}, the main difference compared to the equations considered in \cite{Diening2005} is the appearance of $\nabla c$ and $\pa_t c$ terms in a series of estimates. To handle these terms, we will estimate $\|\nabla c\|_q$ and $\|\pa_t c\|_q$ by testing some suitable quantities to the equation \eqref{regularequationcn}. These are encapsulated in the following two consecutive lemmas.
\begin{lemma} \label{gradcqlemma}
    Let $q \ge 2$. Then there exist constants $R_{13}>\frac{12}{5}$, $R_{14}>q$ such that
    \begin{equation*} 
        \frac{\rm d}{\dt} \|\nabla c \|_q^q + C\int_\Omega \left|\nabla (|\nabla c |^\frac{q}{2}) \right|^2 \dx \le C \|\overline{\DD}\vv\|_\frac{12}{5}^{R_{13}} + C \|\nabla c \|_q^{R_{14}} + C.
    \end{equation*}
\end{lemma}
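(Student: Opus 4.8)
The plan is to derive an equation for $|\nabla c|^q$ by differentiating the pointwise concentration equation \eqref{regularequationcn} in space, then test with $|\nabla c|^{q-2}\nabla c$ and estimate the resulting convective terms against the diffusion term $\int_\Omega |\nabla(|\nabla c|^{q/2})|^2\dx$ and against $\|\overline{\DD}\vv\|_{12/5}$. First I would apply $\partial_{x_k}$ to \eqref{regularequationcn}, giving
\[
\partial_t(\partial_{x_k}c) + \vv\cdot\nabla(\partial_{x_k}c) + (\partial_{x_k}\vv)\cdot\nabla c - \Delta(\partial_{x_k}c) = 0.
\]
Multiplying by $|\nabla c|^{q-2}\partial_{x_k}c$, summing over $k$, and integrating over $\Omega$, the time term produces $\frac{1}{q}\frac{\rm d}{\dt}\|\nabla c\|_q^q$; the pure-transport term $\int_\Omega (\vv\cdot\nabla(\partial_{x_k}c))|\nabla c|^{q-2}\partial_{x_k}c\dx = \frac{1}{q}\int_\Omega \vv\cdot\nabla(|\nabla c|^q)\dx = 0$ after integration by parts using $\operatorname{div}\vv=0$ and periodicity. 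The Laplacian term, after integration by parts, yields a good term controlling $\int_\Omega |\nabla c|^{q-2}|\nabla^2 c|^2\dx$ from below, which (up to constants depending on $q$) dominates $c_q\int_\Omega |\nabla(|\nabla c|^{q/2})|^2\dx$; this is the standard computation for $p$-Laplacian-type estimates and I would cite the elementary inequality $|\nabla(|\nabla c|^{q/2})|^2 \le C_q |\nabla c|^{q-2}|\nabla^2 c|^2$.

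The only genuinely nontrivial term is the one coming from the transport of the velocity gradient, namely
\[
{\rm E} := -\int_\Omega (\partial_{x_k}\vv\cdot\nabla c)\,|\nabla c|^{q-2}\partial_{x_k}c\dx,
\]
which satisfies $|{\rm E}| \le \int_\Omega |\nabla\vv|\,|\nabla c|^q\dx$. The plan is to absorb a factor $|\nabla c|^{q/2}$ into the diffusion term via Gagliardo--Nirenberg and estimate the remaining $|\nabla c|^{q/2}$ and $|\nabla\vv|$ factors by interpolation. Concretely, write $|{\rm E}|\le \|\nabla\vv\|_r\,\||\nabla c|^{q/2}\|_{2r'}^2$ with $\frac{1}{r}+\frac{1}{r'}=1$; choose $r$ so that $\|\nabla\vv\|_r$ is controlled by $\|\DD\vv\|_{12/5}$ (via Korn and Sobolev embedding, e.g. $r\le \frac{12}{5}$ suffices since $\Omega$ is bounded) — actually one needs a little room, so I would instead use that \eqref{best1} already provides an $L^\infty_t L^2_x$ bound on $\vv$ together with the $\|\overline{\DD}\vv\|_{12/5}$ control of $\nabla\vv$, interpolating to bound $\|\nabla\vv\|_r$ for a suitable $r$ slightly above $\tfrac{12}{5}$; then apply Gagliardo--Nirenberg to $\||\nabla c|^{q/2}\|_{2r'}^2 \le \epsilon\|\nabla(|\nabla c|^{q/2})\|_2^2 + C\||\nabla c|^{q/2}\|_2^{2\kappa}$ for an exponent $\kappa\ge 1$ determined by $r'$, and finally Young's inequality to split the product into $\epsilon\int_\Omega|\nabla(|\nabla c|^{q/2})|^2\dx + C\|\overline{\DD}\vv\|_{12/5}^{R_{13}} + C\|\nabla c\|_q^{R_{14}}$, with $R_{13},R_{14}$ emerging as the conjugate Young exponents (both strictly larger than $\tfrac{12}{5}$ and $q$ respectively by the choice of parameters). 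Absorbing the $\epsilon$-terms into the diffusion term on the left and relabelling constants gives the claim.

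The main obstacle is bookkeeping the exponents: one must verify that there is an admissible triple $(r, 2r', \kappa)$ for which the Gagliardo--Nirenberg interpolation exponent satisfies $2\kappa(1-\theta)<2$ (so that the bad power of $\|\nabla(|\nabla c|^{q/2})\|_2$ can be absorbed) while simultaneously $\|\nabla\vv\|_r$ is controlled by the available $L^\infty_t L^2_x\cap L^\infty_t W^{1,12/5}_x$ information on $\vv$; since we only need $q\ge 2$ and $\Omega\subset\R^3$ is bounded, the embedding $W^{1,12/5}\hookrightarrow L^{12}$ gives enough integrability of $\nabla\vv$ and the scaling works out, producing the stated $R_{13}>\tfrac{12}{5}$, $R_{14}>q$. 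I would also note that all constants depend only on $q$, $p^-$, $p^+$ and the data, hence are independent of $n$ and $\delta$, as required for the subsequent uniform estimate and Grönwall argument.
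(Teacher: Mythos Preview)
Your approach is essentially the same as the paper's: differentiate the concentration equation, test with $|\nabla c|^{q-2}\nabla c$, isolate the good diffusion term $\int_\Omega|\nabla(|\nabla c|^{q/2})|^2\dx$, and estimate the bad term $\int_\Omega|\nabla\vv|\,|\nabla c|^q\dx$ via H\"older, Gagliardo--Nirenberg, and Young. The paper does exactly this, and your proposal would go through.

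However, your hedging about the exponent $r$ is unnecessary and contains a small error. You do \emph{not} need ``a little room'' above $\tfrac{12}{5}$: the paper simply takes $r=\tfrac{12}{5}$, so that $2r'=\tfrac{24}{7}$, and applies Gagliardo--Nirenberg in three dimensions,
\[
\big\||\nabla c|^{q/2}\big\|_{24/7}^2 \le C\big\||\nabla c|^{q/2}\big\|_2^{3/4}\,\big\|\nabla(|\nabla c|^{q/2})\big\|_2^{5/4} + C\big\||\nabla c|^{q/2}\big\|_2^2,
\]
followed by Young's inequality (the gradient exponent $\tfrac{5}{4}<2$ is what allows absorption). Then $\|\nabla\vv\|_{12/5}\le C\|\overline{\DD}\vv\|_{12/5}$ by Korn, and the final exponents $R_{13},R_{14}$ drop out. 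Your remark that ``$W^{1,12/5}\hookrightarrow L^{12}$ gives enough integrability of $\nabla\vv$'' is mistaken---that embedding controls $\vv$, not $\nabla\vv$---but fortunately it is not needed, since $r=\tfrac{12}{5}$ already works.
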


\begin{proof}

    If we multiply $- \nabla \cdot (|\nabla c|^{q-2} \nabla c)$ to \eqref{regularequationcn} and integrate over $\Omega$, then the integration by parts yields 
    \[
    \frac{\rm d}{{\rm d}t} \int_\Omega |\nabla c |^q \dx + C\int_\Omega \left|\nabla (|\nabla c |^\frac{q}{2}) \right|^2 \dx \le C \int_\Omega |\nabla \vv| |\nabla c|^q \dx.
    \]
    Moreover, by H\"older's inequality, we note that
    \[
    \int_\Omega |\nabla \vv| |\nabla c |^q \dx \le \|\nabla \vv\|_\frac{12}{5} \| |\nabla c |^\frac{q}{2} \|_{\frac{24}{7}}^2.
    \]
    For the last term, we use the Gagliardo--Nirenberg inequality (see, e.g., \cite{niren}) to get
    \[
    \||\nabla c |^\frac{q}{2}\|_{\frac{24}{7}}^2 \le C\| |\nabla c|^\frac{q}{2}\|_2^{\frac{3}{4}} \|\nabla (|\nabla c|^\frac{q}{2})\|_2^{\frac{5}{4}} + C\||\nabla c|^\frac{q}{2}\|_2^2.
    \]
    Then the desired estimate follows by applying Young's inequality and Korn's inequality.
\end{proof}

\begin{lemma} \label{patclemma}
    Let $q\ge4$. Then there exist constants $R_{15}>2$, $R_{16},R_{17}>q$ such that
    \begin{equation*} 
    \frac{\rm d}{\dt} \|\pa_t c\|_q^q + C\int_\Omega \left| \nabla (|\pa_t c|^\frac{q}{2}) \right|^2 \dx \le C \|\pa_t \vv\|_2^{R_{15}} + C\|\nabla c\|_q^{R_{16}} + C\|\pa_t c\|_q^{R_{17}} +C.
    \end{equation*}
\end{lemma}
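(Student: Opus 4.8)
The plan is to differentiate the pointwise concentration equation \eqref{regularequationcn} in time and test with $|\pa_t c|^{q-2}\pa_t c$; this is legitimate because $c$ and its time derivatives are smooth by \eqref{regularity of cn} (and no boundary terms arise by periodicity). Differentiating gives $\pa_{tt}c + \pa_t\vv\cdot\nabla c + \vv\cdot\nabla\pa_t c - \Delta\pa_t c = 0$ (dropping the indices $n,\delta$). Testing with $|\pa_t c|^{q-2}\pa_t c$ and integrating over $\Omega$: the first term produces $\tfrac1q\frac{\rm d}{\dt}\|\pa_t c\|_q^q$; integration by parts turns $-\int_\Omega\Delta\pa_t c\,|\pa_t c|^{q-2}\pa_t c\dx$ into $\tfrac{4(q-1)}{q^2}\int_\Omega|\nabla(|\pa_t c|^{q/2})|^2\dx$, the good dissipative term; and $\int_\Omega(\vv\cdot\nabla\pa_t c)|\pa_t c|^{q-2}\pa_t c\dx = \tfrac1q\int_\Omega\vv\cdot\nabla(|\pa_t c|^q)\dx = 0$ since $\mathrm{div}\,\vv = 0$. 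Thus everything reduces to bounding $J := \int_\Omega(\pa_t\vv\cdot\nabla c)|\pa_t c|^{q-2}\pa_t c\dx$.

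To estimate $J$ I would use H\"older's inequality with exponents $(2,\,q,\,\tfrac{2q}{q-2})$, giving $|J|\le\|\pa_t\vv\|_2\|\nabla c\|_q\|\pa_t c\|_{\frac{2q(q-1)}{q-2}}^{q-1}$, and then write the last factor as $\||\pa_t c|^{q/2}\|_r^{2(q-1)/q}$ with $r := \tfrac{4(q-1)}{q-2}$. This is the point where the hypothesis $q\ge 4$ enters: it is \emph{exactly} equivalent to $r\le 6$, so the three-dimensional Gagliardo--Nirenberg inequality $\|f\|_r\le C\|f\|_2^{1-\theta}\|\nabla f\|_2^{\theta}+C\|f\|_2$ applies to $f = |\pa_t c|^{q/2}$ with $\theta = \tfrac32-\tfrac3r = \tfrac{3q}{4(q-1)}\in(0,1]$ (the endpoint $\theta=1$, $r=6$ occurring only at $q=4$). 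Raising it to the power $\tfrac{2(q-1)}{q}$, the key bookkeeping is that the dissipative factor $\|\nabla(|\pa_t c|^{q/2})\|_2$ appears to the power $\theta\cdot\tfrac{2(q-1)}{q} = \tfrac32$ — below $2$, and pleasantly independent of $q$ — while $\||\pa_t c|^{q/2}\|_2 = \|\pa_t c\|_q^{q/2}$ appears to the power $(1-\theta)\cdot\tfrac{2(q-1)}{q} = \tfrac{q-4}{2q}$, plus a lower-order contribution $\||\pa_t c|^{q/2}\|_2^{2(q-1)/q} = \|\pa_t c\|_q^{q-1}$.

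It then remains to collect powers. Since $\|\nabla(|\pa_t c|^{q/2})\|_2$ occurs with exponent $\tfrac32<2$, Young's inequality with exponents $(4,\tfrac43)$ absorbs $\epsilon\int_\Omega|\nabla(|\pa_t c|^{q/2})|^2\dx$ into the left-hand side and leaves $C\|\pa_t\vv\|_2^4\|\nabla c\|_q^4\|\pa_t c\|_q^{q-4}$ from the principal term (using $((q-4)/4)\cdot 4 = q-4$) and $C\|\pa_t\vv\|_2\|\nabla c\|_q\|\pa_t c\|_q^{q-1}$ from the lower-order term. A final three-term Young's inequality distributes each of these products as $C\|\pa_t\vv\|_2^{R_{15}} + C\|\nabla c\|_q^{R_{16}} + C\|\pa_t c\|_q^{R_{17}} + C$; the feasibility check is that the target exponents on $\|\nabla c\|_q$ and $\|\pa_t c\|_q$ can be taken strictly above $q$ while leaving a positive share for $\|\pa_t\vv\|_2$ (hence $R_{15}>2$), which works because $\tfrac4q+\tfrac{q-4}q = 1$ for the first product and the analogous splitting handles the second; when $q=4$ the factor $\|\pa_t c\|_q^{q-4}$ is simply the constant $1$. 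Taking the larger exponents from the two products yields the stated $R_{15}>2$ and $R_{16},R_{17}>q$. The only genuinely delicate part is the exponent arithmetic of the second paragraph — in particular that the dissipative term comes out with power $\tfrac32$, and that $q\ge4$ is precisely the threshold for Gagliardo--Nirenberg to be available in three dimensions; everything else is routine.
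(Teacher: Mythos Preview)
Your proof is correct and follows essentially the same route as the paper: differentiate \eqref{regularequationcn} in time, test with $|\pa_t c|^{q-2}\pa_t c$, and reduce to bounding $J=\int_\Omega(\pa_t\vv\cdot\nabla c)|\pa_t c|^{q-2}\pa_t c\dx$ by H\"older, a Sobolev-type inequality, and Young. The only difference is a minor technical choice: the paper applies H\"older with the (generally non-sharp) split $\tfrac12+\tfrac1q+\tfrac{q-1}{3q}\le1$ to place $\pa_t c$ directly in $L^{3q}$ and then uses the endpoint embedding $\||\pa_t c|^{q/2}\|_6\le C\|\nabla(|\pa_t c|^{q/2})\|_2+C\||\pa_t c|^{q/2}\|_2$, yielding the gradient with power $\tfrac{2(q-1)}{q}<2$, whereas you use the sharp H\"older split and Gagliardo--Nirenberg interpolation to get the gradient with power exactly $\tfrac32$; both are valid and lead to the stated conclusion.
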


\begin{proof}
    We shall take the time derivative to \eqref{regularequationcn} and test $|\pa_t c|^{q-2} \pa_t c$ to get
    \[
    \frac{1}{q} \frac{\rm d}{\dt} \|\pa_t c\|_q^q + C \int_\Omega \left| \nabla(|\pa_t c |^\frac{q}{2}) \right|^2 \dx = - \int_\Omega (\pa_t \vv \cdot \nabla c) |\pa_t c|^{q-2} \pa_t c \dx.
    \]
    Since $\frac{1}{2} + \frac{1}{q} + \frac{q-1}{3q} \le 1$ when $q \ge 4$, by H\"older's inequality and the Sobolev embedding, we have
    \begin{align*}
        \bigg|\int_\Omega (\pa_t \vv \cdot \nabla c) |\pa_t c|^{q-2} \pa_t c \dx\bigg|
        & \le \int_\Omega |\pa_t \vv| |\nabla c| |\pa_t c|^{q-1} \dx \le C \|\pa_t \vv\|_2\|\nabla c\|_q\|\pa_t c\|_{3q}^{q-1} +C\\
        & = C \|\pa_t \vv\|_2 \|\nabla c \|_q \||\pa_t c|^\frac{q}{2} \|_6^\frac{2(q-1)}{q} +C\\
        &\le C \|\pa_t \vv\|_2 \|\nabla c \|_q \bigg(\|\nabla (|\pa_t c|^\frac{q}{2})\|_2^\frac{2(q-1)}{q} +  \||\pa_t c|^\frac{q}{2}\|_2^{\frac{2(q-1)}{q}} \bigg) + C.
    \end{align*}
    Since $\frac{2(q-1)}{q} <2$, we can apply Young's inequality to get the desired result.
\end{proof}

Now we choose $q = q_0$, where $q_0$ is given in Theorem~\ref{maintheorem}. By reinstating the index $n \in \mathbb{N}$ and $\delta > 0$, and combining the estimates \eqref{Dvsseqn} with $s = \frac{12}{5}$, \eqref{Ipv2}, \eqref{vt22_2}, \eqref{Dv24/5eqn}, and \eqref{Ip4/3}, we obtain the following {\textit{a priori}} estimate, which will be used in the application of the local Gr\"onwall inequality: for some constants $S_1>\frac{12}{5}$, $S_2>2$ and $S_3,S_4>q$, there holds
\begin{equation} \label{Gronwall}
\begin{split}
    \frac{\rm d}{\dt} \|&\overline{\DD}\vv_\delta^n\|_\frac{12}{5}^\frac{12}{5} + \frac{\rm d }{\dt}  \|\pa_t \vv_\delta^n\|_2^2 + \frac{\rm d}{\dt}\|\nabla c_\delta^n\|_q^q + \frac{\rm d}{\dt}\|\pa_t c_\delta^n\|_q^q  + C\calI_p(c_\delta^n,\vv_\delta^n) + C \calI_p(c_\delta^n,\vv_\delta^n)^\frac{5}{3} + C \calJ_p(c_\delta^n,\vv_\delta^n) \\
    & \le C\Big( 1 + \|\pa_t \fff\|_2^2+ \|\overline{\DD}\vv_\delta^n\|_\frac{12}{5}^{S_1} + \|\pa_t \vv_\delta^n\|_2^{S_2}+ \|\nabla c_\delta^n\|_q^{S_3} + \|\pa_t c_\delta^n\|_q^{S_4} \Big).
\end{split}
\end{equation}

\subsection{Application of local Gr\"onwall inequality}
The objective of this section is to obtain the uniform boundedness for $\vv^n_\delta$ and $c_\delta^n$ by applying the localized version of Gr\"onwall's lemma (Lemma \ref{Gronwalllemma}). We present this part as a separate section, since our setting involves some additional technical considerations, compared to other related works, e.g., \cite{Diening2005}. To be more specific, to apply the local Gr\"onwall inequality, we firstly need to check that $\|\nabla c_\delta^n(t)\|_q$ and $\|\pa_t c_\delta^n(t)\|_q$ are absolutely continuous with respect to the temporal variable $t\in I$. As in \cite{Diening2005}, the velocity field lies at the Galerkin level and the spatial norm is sufficiently smooth in time, allowing the direct application of the Grönwall inequality. For the case of the concentration, however, we employed a two-level Galerkin approximation, in which we first let $m \to \infty$ to pass the convection-diffusion equation to the continuous level. Consequently, the required regularity for $ \|\nabla c_\delta^n(t)\|_q $ and $\|\partial_t c_\delta^n(t)\|_q$ must be verified explicitly. This can be deduced from the preceding discussion \eqref{regularity of cn}, where we exploited the parabolic regularity theory. This is precisely the reason we regularized the initial condition for the concentration. Without such regularization, one would have to impose higher differentiability assumptions on the initial data. Although this regularization introduces an additional limiting procedure and complicates the analysis, it enables us to derive the necessary properties under minimal assumptions on $c_0$.

Secondly, we also need to show that the initial data $\vv_\delta^n$ and $c_\delta^n$ are independent of $n\in\mathbb{N}$ and $\delta>0$.
Indeed, note here that
\[
\|(\nabla \vv_\delta^n)(0)\|_\frac{12}{5} = \|\nabla P^n \vv_0 \|_\frac{12}{5} \le C \|P^n\vv_0\|_{2,2} \le C\|\vv_0\|_{2,2} \le C,
\]
and from the continuity of $\nabla c_\delta^n(t)$ on $I$, we see that
\[
\|(\nabla c_\delta^n)(0)\|_q = \| c_{0,\delta}\|_{1,q} \le C.
\]
Next, for the velocity term $\pa_t \vv_\delta^n$, with $\varphi \in L^2(\Omega)$ satisfying $\|\varphi\|_2 \le 1$, we have
\begin{align}
    |\innerproduct{\pa_t \vv_\delta^{n}(0)}{\varphi}| &= \lim_{t \to 0+}|\innerproduct{\pa_t \vv_\delta^{n}(t)}{\varphi}| \nonumber= \lim_{t \to 0+} |\innerproduct{\pa_t\vv_\delta^n(t)}{P^n\varphi}| \nonumber \\
    &= \lim_{t \to 0+} |\innerproduct{{\rm div}\, \SSS(c^{n}_\delta(t),\DD\vv^{n}_\delta(t)) + (\vv^{n}_\delta(t)\cdot\nabla)\vv^{n}_\delta(t) - \fff(t)}{P^n\varphi}| \nonumber\\
    &\le \lim_{t \to 0+} \|\nabla \SSS(c^{n}_\delta(t),\DD\vv^{n}_\delta(t))\|_2 +  \lim_{t \to 0+}\| \vv^{n}_\delta(t)\|_{2,2}^2 + \lim_{t \to 0+} \|\fff(t)\|_2. \label{gradS}
\end{align}
To estimate the first term of \eqref{gradS}, from \eqref{P1}, we see that the Cauchy stress tensor $\SSS(\cdot,\cdot)$ satisfies the estimate
\[
|\nabla \SSS(c,\DD\vv)| \le C (1+ |\DD\vv|^2)^\frac{p(c)-2}{2} |\nabla \DD\vv| + C (1+ |\DD\vv|^2)^\frac{p(c)-1}{2} \log(2+|\DD\vv|)|\nabla c|=:{\rm E_1} + {\rm E_2}.
\]
Since $p^+ \le 2$, it follows that
\[
{\rm E_1} \le C |\nabla \DD\vv|.
\]
For the second term ${\rm E_2}$, we have that
\begin{align*}
{\rm E_2} &= C (1+ |\DD\vv|^2)^\frac{p(c)-2}{2} |\overline{\DD}\vv| \log(2+ |\DD\vv|)|\nabla c|\\
& \le C|\overline{\DD}\vv|^{3}+ C|\nabla c|^2 \le C + C|\DD\vv|^{3} + C|\nabla c|^2,
\end{align*}
where we have used the condition $p^+\le2$ once more and the fact $\log(2+|\DD\vv|) \le C(1+|\DD\vv|)^{\frac{1}{2}}$, together with Young's inequality. Therefore, it follows from the Sobolev embedding that
\[
\begin{aligned}
\lim_{t \to 0+}\|\nabla \SSS(c^{n}_\delta(t),\DD\vv^{n}_\delta(t))\|_2 &\le  C+ C \lim_{t \to 0+}( \|\vv^{n}_\delta(t)\|^3_{2,2} + \|\nabla c^{n}_\delta(t)\|^2_4) \\
&=C + C( \|\vv^{n}_\delta(0)\|^3_{2,2} + \|\nabla c^{n}_\delta(0)\|^2_4) \\
&=C+ C(\|P^n\vv_0\|^3_{2,2} + \| c_{0,\delta}\|^2_{1,4}) \le C.
\end{aligned}
\]
For the second term in \eqref{gradS}, as above, it is straightforward to see that $\lim_{t \to 0+} \|\vv^n_\delta(t)\|_{2,2}^2 = \|\vv^n_\delta(0)\|_{2,2}^2 \le C$. Moreover, since 
$\fff \in L^\infty(I;W^{1,2}(\Omega)$ and $\pa_t \fff \in L^2(I;L^2(\Omega))$ implies $\fff \in C(I;L^2(\Omega))$, we see that $\lim_{t \to 0+}\|\fff(t)\|_2 =\|\fff(0)\|_2 \le C$. Therefore, we have $|\innerproduct{\pa_t \vv_\delta^n(0)}{\varphi}| \le C$, which imiplies $\|\pa_t\vv_\delta^n(0)\|_2 \le C$. Finally, for the terms concerning the concentrations, since $\|\nabla^2 c^n_\delta\|_q$ is continuous on $I$, thanks to the regularity \eqref{regularity of cn}, there holds that
\begin{align*}
    \|\pa_t c^n_\delta(0)\|_q &= \lim_{t\to 0+} \|\pa_t c^n_\delta(t)\|_q= \lim_{t \to 0+} \|-\vv^n_\delta(t) \cdot \nabla c^n_\delta(t) + \Delta c^n_\delta(t)\|_q \\
    &\le \lim_{t \to 0+} (\|\vv^n_\delta(t)\|_\infty \|\nabla c^n_\delta(t)\|_q + \|\nabla^2 c^n_\delta(t)\|_q ) \\
    &= \|\vv^n_\delta(0)\|_\infty \|\nabla c^n_\delta(0)\|_q + \|\nabla^2 c^n_\delta(0)\|_q \\
    &\le \|P^n\vv_0\|_{2,2} \|c_{0,\delta}\|_{1,q} + \|c_{0,\delta}\|_{2,q} \le C.
\end{align*}

Now we are ready to apply the local Gr\"onwall inequality. From the assumption of the main theorem, we know that $\|\pa_t\fff\|_2^2 \in L^1(I)$. This, together with the above discussions, enables us to apply Lemma \ref{Gronwalllemma} to \eqref{Gronwall}. Then, by Korn's inequality, we can finally obtain the following uniform estimates:
\begin{equation}\label{uniformest1}
\|\nabla\vv^n_\delta\|_{L^\infty(I';L^\frac{12}{5}(\Omega))} + \|\pa_t \vv^n_\delta\|_{L^\infty(I';L^2(\Omega))} + \|\nabla c^n_\delta\|_{L^\infty(I';L^q(\Omega))} + \|\pa_t c^n_\delta\|_{L^\infty(I';L^q(\Omega))} \le C,
\end{equation}
where $I'=[0,T']$ for some $T' \in (0,T)$ and the constant $C$ on the right-hand side of \eqref{uniformest1} is independent of $n\in\mathbb{N}$ and $\delta>0$. Furthermore, we can also have a uniform estimate for the energies: for a constant independent of $n\in\mathbb{N}$ and $\delta>0$, there holds
\begin{equation} \label{uniformest2}
\|\calI_p(c^n_\delta,\vv^n_\delta)\|_{L^\frac{5}{3}(I')} + \|\calJ_p(c^n_\delta,\vv^n_\delta)\|_{L^1(I')} \le C.
\end{equation}

\section{Proof of Theorem \ref{maintheorem}}\label{sec:limit}
In this section, we establish the main existence theorem for strong solutions by performing limiting processes based on the uniform estimates derived in the previous section. To begin with, by combining \eqref{grad^2vp}, \eqref{uniformest1}, and \eqref{uniformest2}, we observe that
\begin{equation}\label{v2,pest}
\| \vv^n_\delta\|_{L^{p^-}(I';W^{2,p^-}(\Omega))} \le C,
\end{equation}
where the constant $C$ is independent of $n\in\mathbb{N}$ and $\delta>0$. Therefore, together with the uniform estimate \eqref{uniformest1}, we can extract a (not relabeled) subsequence with respect to $n\in\mathbb{N}$ such that
\begin{equation} \label{convergences}
    \begin{aligned}
        \vv_\delta^n &\rightharpoonup \vv_\delta &&\mbox{weakly in }L^{p^-}(I';W^{2,p^-}(\Omega)),  \\
     \vv_\delta^n & \overset{\ast} {\rightharpoonup} \vv_\delta && \text{weakly-$^*$ in } L^\infty(I';W^{1,\frac{12}{5}}(\Omega)), \\
     \partial_t \vv_\delta^n & \overset{\ast} {\rightharpoonup} \partial_t \vv_\delta && \text{weakly-$^*$ in } L^\infty(I';L^2(\Omega)), \\
     c_\delta^n & \overset{\ast}  {\rightharpoonup} c_\delta && \text{weakly-$^*$ in } L^\infty(I';W^{1,q}(\Omega)), \\
     \pa_t c_\delta^n & \overset{\ast} {\rightharpoonup} \pa_t c_\delta && \text{weakly-$^*$ in } L^\infty(I';L^q(\Omega)). 
    \end{aligned}
\end{equation}
Since $W^{2,p^-}(\Omega)$ is compactly embedded into $W^{1,2}(\Omega)$ for $p^->\frac{6}{5}$, from the Aubin--Lions lemma (see, e.g., \cite{Lion1969}), we obtain that 
\begin{equation} \label{conv_v}
    \vv_\delta^n \to \vv_\delta \quad \mbox{strongly in }L^2(I';W^{1,2}(\Omega)),
\end{equation}
from which we can deduce
\begin{equation*}
    \DD\vv_\delta^n \to \DD\vv_\delta \quad \text{a.e. in }Q_{T'}.
\end{equation*}
For the approximate concentration, applying the Aubin--Lions lemma once more, together with the compact embedding $W^{1,q}(\Omega) \hookrightarrow\hookrightarrow L^q(\Omega)$ yields 
\begin{equation*}
    c_\delta^n \to c_\delta \quad \text{strongly in } L^q(Q_{T'}),
\end{equation*}
which implies that
\begin{equation*}
    c_\delta^n \to c_\delta \quad \text{a.e. in }Q_{T'}.
\end{equation*}
Therefore, the continuity of $\SSS(\cdot,\cdot)$ yields that
\begin{equation*}
    \SSS(c_\delta^n,\DD\vv_\delta^n) \to \SSS(c_\delta,\DD\vv_\delta) \quad \text{a.e. in }Q_{T'}.
\end{equation*}
In addition, it follows from \eqref{best1} that
\begin{equation*}
    \|\SSS(c_\delta^n,\DD\vv_\delta^n)\|_{L^1(Q_{T'})} \le C,
\end{equation*}
where the constant $C$ is independent of $n\in\mathbb{N}$ and $\delta>0$. Thus, by applying Vitali's convergence theorem (see, e.g., \cite{malek}), we get
\begin{equation} \label{convergence of S}
    \SSS(c_\delta^n,\DD\vv_\delta^n) \to \SSS(c_\delta,\DD\vv_\delta) \quad \text{in }L^1(Q_{T'}),
\end{equation}
and hence, we have identified the limit of the Cauchy stress tensor. For the approximate convective term, from the convergence \eqref{conv_v}, we have
\begin{equation} \label{convergence of convective}
    (\vv_\delta^n \cdot \nabla) \vv_\delta^n \to (\vv_\delta \cdot \nabla) \vv_\delta \quad \mbox{in } L^\frac{7}{5}(Q_{T'}).
\end{equation}
Now we multiply $\xi \in C_0^\infty(I')$ to \eqref{aux11} and integrate over $I'$ to get
\begin{equation}\label{disc_inter_eq}
    \int_{I'} \big(\innerproduct{\pa_t \vv^n_\delta + (\vv^n_\delta \cdot \nabla) \vv^n_\delta}{\ww_i} + \innerproduct{ \SSS(c^n_\delta,\DD\vv^n_\delta)}{\DD\ww_i} \big) \xi \dt
    = \int_{I'} \innerproduct{\fff}{\ww_i}\, \xi  \dt.
\end{equation}
Then from \eqref{convergences}, \eqref{convergence of S} and \eqref{convergence of convective}, taking the limit $n \to \infty$ to the discrete equations \eqref{disc_inter_eq} yields
\begin{equation} \label{v equation1}
    \int_{I'} \big(\innerproduct{\pa_t \vv_\delta + (\vv_\delta \cdot \nabla) \vv_\delta}{\ww_i} + \innerproduct{ \SSS(c_\delta,\DD\vv_\delta)}{\DD\ww_i} \big) \xi \dt
    = \int_{I'} \innerproduct{\fff}{\ww_i} \,\xi  \dt.
\end{equation}
Next, in order to write the equations involved in \eqref{v equation1} in a strong form, we need an additional regularity of the Cauchy stress tensor $\SSS(\cdot,\cdot)$. To do this, note from \eqref{P1} that there holds
\begin{align*}
|\nabla \SSS(c_\delta,\DD\vv_\delta)|^\frac{7}{5} &\le C (1+|\DD\vv_\delta|^2)^{\frac{7}{5}\frac{p(c_\delta)-2}{2}} |\nabla \DD\vv_\delta|^\frac{7}{5} + C(1+ |\DD\vv_\delta|^2)^{\frac{7}{5}\frac{p(c_\delta)-1}{2}} \log^\frac{7}{5}(2+ |\DD\vv_\delta|)|\nabla c_\delta|^\frac{7}{5} \\
&\le C|\nabla \DD\vv_\delta|^\frac{7}{5} + C(1+|\DD\vv_\delta|^2)^\frac{7}{10} \log^\frac{7}{5}(2+ |\DD\vv_\delta|)|\nabla c_\delta|^\frac{7}{5}.
\end{align*}
Since $q \ge 4$, we apply Young's inequality to get
\[
|\nabla \SSS(c_\delta,\DD\vv_\delta)|^\frac{7}{5} \le C |\nabla \DD\vv_\delta|^\frac{7}{5} + C|\DD\vv_\delta|^\frac{12}{5} + C\log^\alpha(2+ |\DD\vv_\delta|) + C|\nabla c_\delta |^q + C,
\]
where $\frac{7}{12} + \frac{7}{5\alpha} + \frac{7}{5q} =1$ with $\alpha>1$ being large enough. Then together with $\log(2+ |\DD\vv|) \le C(1+ |\DD\vv|)^\frac{12}{5\alpha}$, we write
\[
|\nabla \SSS(c_\delta,\DD\vv_\delta)|^\frac{7}{5} \le C |\nabla \DD\vv_\delta|^\frac{7}{5} + C|\DD\vv_\delta|^\frac{12}{5} + C|\nabla c_\delta |^q + C.
\]
Consequently, using \eqref{convergences}, we deduce that
\begin{equation}\label{S_inter_reg}
\|\nabla \SSS(c_\delta,\DD\vv_\delta)\|_{L^\frac{7}{5}(Q_{T'})}\le C.
\end{equation}
Summarizing the above discussions, we obtain the following uniform estimate: for some constant $C$ independent of $\delta>0$, there holds
\begin{equation}\label{all_inter_reg}
    \|\pa_t\vv_\delta\|_{L^\infty(I';L^2(\Omega))} + \|\nabla \SSS(c_\delta,\DD\vv_\delta)\|_{L^\frac{7}{5}(Q_{T'})} + \|(\vv_\delta \cdot \nabla) \vv_\delta\|_{L^\frac{7}{5}(Q_{T'})} \le C.
\end{equation}
Now, thanks to \eqref{all_inter_reg}, we can rewrite the formulation \eqref{v equation1} as follows: for $\ww \in \{\ww_i\}_{i=1}^\infty$,  
\begin{equation*}
    \int_{I'} \big(\innerproduct{\pa_t \vv_\delta + (\vv_\delta \cdot \nabla) \vv_\delta - {\rm div}\, \SSS(c_\delta,\DD\vv_\delta)}{\ww} \big) \xi \dt
    = \int_{I'} \innerproduct{\fff}{\ww} \,\xi \dx \dt.
\end{equation*}
Therefore, by the density of smooth functions, we may write the equation as
\begin{equation} \label{vestrong}
    \int_{Q_{T'}} \big(\pa_t \vv_\delta+ (\vv_\delta \cdot \nabla) \vv_\delta   - {\rm div} \, \SSS(c_\delta,\DD\vv_\delta) \big) \cdot \ffpsi \dx\dt = \int_{Q_{T'}} \fff \cdot \ffpsi \dx \dt \quad \text{for all }\ffpsi \in C_0^\infty(I';\mathcal{V}).
\end{equation}

Next, let us move to the equation for the concentration. From Lemma \ref{gradcqlemma}, we have $\|c^n_\delta\|_{L^2(I';W^{2,2}(\Omega))} \le C$, which implies
\begin{equation}\label{c W22 conv}
    c^n_\delta \rightharpoonup c_\delta \quad \text{weakly in } L^2(I';W^{2,2}(\Omega)).
\end{equation}
Therefore, combining \eqref{convergences} and \eqref{c W22 conv}, the Aubin--Lions lemma with $W^{2,2}(\Omega) \hookrightarrow\hookrightarrow W^{1,4}(\Omega)$ implies
\begin{equation} \label{conv_c_2}
c^n_\delta \to c_\delta \quad \text{strongly in } L^2(I';W^{1,4}(\Omega)).
\end{equation}
Thus, from \eqref{convergences}, \eqref{conv_v} and \eqref{conv_c_2}, it is straightforward to see that
\begin{equation} \label{v grad c conv}
\vv^n_\delta \cdot \nabla c^n_\delta \to \vv_\delta \cdot \nabla c_\delta \quad \text{in }L^2(Q_{T'}).
\end{equation}
Then, as before, we multiply $\zeta \in C^\infty_0(I')$ to \eqref{aux22} and integrate over $I'$ to obtain
\[
\int_{I'} (\innerproduct{\pa_t c^n_\delta + \vv^n_\delta \cdot \nabla c^n_\delta}{\varphi} + \innerproduct{\nabla c^n_\delta}{\nabla \varphi} ) \zeta \dt =0.
\]
Thanks to \eqref{convergences}, \eqref{c W22 conv} and \eqref{v grad c conv}, and by the density of smooth functions, we may write
\begin{equation} \label{cestrong}
    \int_{Q_{T'}} \big(\pa_t c_\delta  + {\rm div}\,  (c_\delta\vv_\delta)  - \Delta c_\delta  \big)\, \phi\dx\dt = 0 \quad \text{for all }\phi \in C_0^\infty(I';C^\infty(\Omega)),
\end{equation}
with the uniform estimate
\begin{equation*}
    \|\pa_t c_\delta\|_{L^2(Q_{T'})} + \|\, {\rm div} \, (c_\delta \vv_\delta)\|_{L^2(Q_{T'})} + \|\Delta c_\delta\|_{L^2(Q_{T'})} \le C.
\end{equation*}

Next, we need to verify $\vv_\delta(0)= \vv_0$ and $c_\delta(0) = c_{0,\delta}$. We first observe, from $\vv_\delta, c_\delta \in L^2(I';W^{1,2}(\Omega))$ and $\pa_t \vv_\delta, \pa_t c_\delta \in L^2(I';L^2(\Omega))$, that $\vv_\delta, c_\delta \in C(I';L^2(\Omega))$, so that the evaluation at $t=0$ makes sense. To see $\vv_\delta(0)= \vv_0$, we integrate \eqref{aux11} over $t \in (0,\tau)$ and pass $n \to \infty$ to get
\begin{equation} \label{vdelta0}
\int_\Omega \vv_\delta(\tau) \cdot \ww_i \dx + \int_0^\tau \int_\Omega (( \vv_\delta \cdot \nabla) \vv_\delta  - {\rm div}\, \SSS(c_\delta, \DD\vv_\delta) -\fff) \cdot \ww_i \dx \dt = \int_\Omega \vv_0 \cdot \ww_i \dx,
\end{equation}
where we have used \eqref{convergences}, \eqref{convergence of S}, \eqref{convergence of convective} and \eqref{S_inter_reg}, together with the fact $\|P^n \vv_0 - \vv_0\|_2 \to 0$. Since $(( \vv_\delta \cdot \nabla) \vv_\delta  - {\rm div}\, \SSS(c_\delta, \DD\vv_\delta) -\fff) \cdot \ww_i\in L^1(I')$, we get
\[
\int_\Omega \vv_\delta(\tau) \cdot \ww_i \dx \to \int_\Omega \vv_0 \cdot \ww_i \dx \quad \text{as }\tau \to 0+,
\]
which, together with $\vv_\delta \in C(I';L^2(\Omega))$, implies $\vv_\delta(\cdot, 0) = \vv_0$. Similarly, to identify $c_\delta(0) = c_{0,\delta}$, we integrate \eqref{aux22} over $t \in (0,\tau)$ and let $n \to \infty$ to obtain
\begin{equation} \label{cdelta0}
\int_\Omega c_\delta(\tau) \varphi \dx + \int_0^\tau \int_\Omega (\vv_\delta \cdot \nabla c_\delta - \Delta c_\delta) \varphi \dx\dt = \int_\Omega c_{0,\delta} \varphi \dx,
\end{equation}
where we have used \eqref{convergences}, \eqref{c W22 conv} and \eqref{v grad c conv}. Since $(\vv_\delta \cdot \nabla c_\delta - \Delta c_\delta) \varphi \in L^1(I')$, we see that
\[
\int_\Omega c_\delta(\tau) \varphi \dx \to \int_\Omega c_{0,\delta} \varphi \dx  \quad \text{as }\tau \to 0+.
\]
Again, by the fact $c_\delta \in C(I';L^2(\Omega))$, we can conclude that $c_\delta(\cdot, 0) = c_{0,\delta}$.

Now, by weak and weak-* lower semicontinuity of norms, we have from \eqref{v2,pest} and \eqref{uniformest1} that
\[
\|\vv_\delta\|_{L^{p^-}(I';W^{2,p^-}(\Omega))} + \|\nabla \vv_\delta\|_{L^\infty(I';L^\frac{12}{5}(\Omega))} + \|\pa_t \vv_\delta\|_{L^\infty(I';L^2(\Omega))} + \|\nabla c_\delta\|_{L^\infty(I';L^q(\Omega))} + \|\pa_t c_\delta\|_{L^\infty(I';L^q(\Omega))} \le C.
\]
Note here that the constant on the right-hand side of \eqref{uniformest1} is independent of $\delta>0$, and thus, the constant in the above inequality is also independent of $\delta>0$. Therefore, with the similar argument used for the limit $n \to \infty$, we can extract the subsequence (not relabeled) with respect to $\delta>0$ such that
\[
    \begin{aligned}  
        \vv_\delta &\rightharpoonup \vv &&\mbox{weakly in }L^{p^-}(I';W^{2,p^-}(\Omega)), \\
     \vv_\delta & \overset{\ast} {\rightharpoonup} \vv && \text{weakly-$^*$ in } L^\infty(I';W^{1,\frac{12}{5}}(\Omega)), \\
     \partial_t \vv_\delta & \overset{\ast} {\rightharpoonup} \partial_t \vv && \text{weakly-$^*$ in } L^\infty(I';L^2(\Omega)), \\
     c_\delta & \overset{\ast}  {\rightharpoonup} c && \text{weakly-$^*$ in }L^\infty(I';W^{1,q}(\Omega)), \\
     \pa_t c_\delta & \overset{\ast} {\rightharpoonup} \pa_t c && \text{weakly-$^*$ in } L^\infty(I';L^q(\Omega)).
    \end{aligned}
\]
Moreover, we can obtain the estimate
\[
    \|\pa_t\vv\|_{L^\infty(I';L^2(\Omega))} + \|\nabla \SSS(c,\DD\vv)\|_{L^\frac{7}{5}(Q_{T'})} + \|(\vv \cdot \nabla) \vv\|_{L^\frac{7}{5}(Q_{T'})} \le C
\]
for the Navier--Stokes equations, and the inequality
\[
\|\pa_t c\|_{L^2(Q_{T'})} + \|\, {\rm div} \, (c \vv)\|_{L^2(Q_{T'})} + \|\Delta c\|_{L^2(Q_{T'})} \le C
\]
for the convection-diffusion equation. Hence we have, from \eqref{vestrong} and \eqref{cestrong}, that
\begin{align*}
        \int_{Q_{T'}} \big(\pa_t \vv+ (\vv \cdot \nabla) \vv   - {\rm div} \, \SSS(c,\DD\vv) \big) \cdot \ffpsi \dx\dt &= \int_{Q_{T'}} \fff \cdot \ffpsi \dx \dt && \text{for all }  \ffpsi \in C_0^\infty(I';\mathcal{V}), \\
        \int_{Q_{T'}} \big(\pa_t c  + {\rm div}\,  (c\vv)  - \Delta c  \big)\, \phi\dx\dt &= 0 && \text{for all }  \phi \in C_0^\infty(I';C^\infty(\Omega)).
\end{align*}

Now, we shall apply de Rahm's theorem to recover the kinematic pressure, which ensures that $\nabla \pi \in L^\frac{7}{5}(I' \times \Omega)$ (see, e.g., \cite{breit, Simon1993}). Therefore, we have that
\begin{alignat*}{2}
    \pa_t \vv + (\vv \cdot \nabla) \vv - {\rm div} \, \SSS(c,\DD\vv) &=- \nabla \pi + \fff \quad &&\text{a.e. in } Q_{T'},\\ 
    \pa_t c + {\rm div}\,(c\vv) - \Delta c &= 0 \quad &&\text{a.e. in }Q_{T'}. 
\end{alignat*}

Finally, to complete the proof of Theorem \ref{maintheorem}, it remains to identify the initial conditions as $\vv(0) = \vv_0$ and $c(0) = c_0$. As before, since $\vv, c \in L^2(I';W^{1,2}(\Omega))$ and $\pa_t \vv, \pa_t c \in L^2(I';L^2(\Omega))$, we see that $\vv, c \in C(I';L^2(\Omega))$ . For the identification of the initial velocity, we let $\delta \to 0+$ in \eqref{vdelta0} to get
\[
\int_\Omega \vv(\tau) \cdot \ww_i \dx + \int_0^\tau \int_\Omega (( \vv \cdot \nabla) \vv  - {\rm div}\, \SSS(c, \DD\vv) -\fff) \cdot \ww_i \dx \dt = \int_\Omega \vv_0 \cdot \ww_i \dx.
\]
Taking $\tau \to 0+$ with the regularity $\vv \in C(I';L^2(\Omega))$ implies $\vv(0) =\vv_0$ as desired. For the concentration, we shall take $\delta \to 0+ $ in \eqref{cdelta0} to obtain
\[
\int_\Omega c(\tau) \varphi \dx + \int_0^\tau \int_\Omega (\vv \cdot \nabla c - \Delta c) \varphi \dx\dt = \int_\Omega c_0 \varphi \dx,
\]
where we used $\|c_{0,\delta}- c_0\|_2\to0$. Therefore, by noting $c \in C(I';L^2(\Omega))$, taking the limit $\tau \to 0+$ yields $c(0) =c_0$, which completes the proof.

\begin{remark}
    As described in Corollary 18 in \cite{Diening2005}, the regularity of $\vv$ can be further improved. More precisely, we can deduce $(\overline{\DD}\vv)^\frac{p^-}{2} \in L^\frac{2(5p^--6)}{2-p^-}(I'; W^{1,2}(\Omega))$ and $\pa_t( (\overline{\DD}\vv)^\frac{p^-}{2} ) \in L^2(I';L^2(\Omega))$, which implies $(\overline{\DD}\vv)^\frac{p^-}{2} \in C(I',L^\frac{2m}{p^-}(\Omega))$ for $1 \le m < 6(p^--1)$. Thus $\vv \in C(I';W^{1,m}(\Omega))$ for $1 \le m < 6(p^--1)$.
\end{remark}

\section{Proof of Theorem \ref{uniqueness theorem}}\label{sec:unique}
In this section, we shall prove the uniqueness of the strong solution presented in Theorem \ref{uniqueness theorem}. Let $(\vv_1,c_1)$ and $(\vv_2,c_2)$ be two strong solutions of the problem \eqref{main_sys}-\eqref{concentration}, which were shown to exist in the previous section. First, if we take the difference between the equations \eqref{main_sys} corresponding to $\vv_1$ and $\vv_2$ and test $\vv_1 - \vv_2$, we have
\begin{equation} \label{uniPQ}
    \frac{1}{2} \frac{\rm d}{\dt} \|\vv_1 - \vv_2\|_2^2 + {\rm P} = {\rm Q},
\end{equation}
where
\begin{equation} \label{P and Q}
\begin{split}
    {\rm P} &:= \int_\Omega \left( \SSS(c_1,\DD\vv_1)-\SSS(c_2,\DD\vv_2)\right) : (\DD\vv_1-\DD\vv_2) \dx, \\
    {\rm Q} &:= - \int_\Omega \left( (\vv_1\cdot \nabla )\vv_1 - (\vv_2 \cdot \nabla ) \vv_2 \right) \cdot (\vv_1 - \vv_2) \dx.
\end{split}
\end{equation}
Let us write ${\rm P}$ as
\begin{equation} \label{uniN}
\begin{split}
    {\rm P} &= \int_\Omega \left( \SSS(c_1,\DD\vv_1)-\SSS(c_1,\DD\vv_2) \right):(\DD\vv_1 - \DD\vv_2)\dx \\
    & \hspace{4mm} + \int_{\Omega}\left(\SSS(c_1,\DD\vv_2) - \SSS(c_2,\DD\vv_2)\right) : (\DD\vv_1-\DD\vv_2) \dx \\
    &=: {\rm P_1} + {\rm P_2}.
\end{split}
\end{equation}
Due to the monotonicity \eqref{P2}, we have for the term $\rm P_1$ that
\begin{equation}\label{P111}
{\rm P_1} \ge C \int_\Omega (1+ |\DD\vv_1|^2 + |\DD\vv_2|^2)^\frac{p(c_1)-2}{2}|\DD\vv_1 - \DD\vv_2|^2 \dx .
\end{equation}
Moreover, by applying the mean value theorem, there exists $z$ between $c_1$ and $c_2$ such that
\[
\begin{split}
    {\rm P_2} 
    &= 2\nu_0\int_\Omega p'(z) (1+ |\DD\vv_2|)^{\frac{p(z)-2}{2}} \log(1+ |\DD\vv_2|^2) \DD\vv_2 : (\DD\vv_1 - \DD\vv_2) (c_1-c_2)\dx.
\end{split}
\]
Since $p$ is Lipschitz continuous and $\log(1+ |\DD\vv|^2) \le C (1+ |\DD\vv|^2)^\alpha$ for some small $\alpha>0$, we can estimate the second term $\rm P_2$ as
\begin{equation*} 
\begin{split}
    |{\rm P_2}| &\le C \int_\Omega (1+ |\DD\vv_2|^2)^{\frac{p(z)-2}{2} + \frac{1}{2} + \alpha} |\DD\vv_1- \DD\vv_2||c_1-c_2| \dx \\
    & \le C \int_\Omega (1+ |\DD\vv_1|^2 + |\DD\vv_2|^2)^{\frac{p(z)-1}{2} + \alpha} |\DD\vv_1- \DD\vv_2||c_1-c_2| \dx \\
    &= C\int_\Omega (1+ |\DD\vv_1|^2 + |\DD\vv_2|^2)^{\frac{2p(z)-p(c_1)}{4} + \alpha } (1+|\DD\vv_1|^2+|\DD\vv_2|^2)^\frac{p(c_1)-2}{4}|\DD\vv_1 - \DD\vv_2||c_1 - c_2| \dx,
\end{split}
\end{equation*}
where we have used $\frac{p(z)-1}{2} \ge 0$ for the second inequality above. Now by applying H\"older's inequality and Young's inequality, we have for sufficiently small $\epsilon>0$ that
\begin{equation} \label{P22}
\begin{aligned}
    |{\rm P_2}| &\le C\left( \int_\Omega (1+|\DD\vv_1|^2 +  |\DD\vv_2|^2)^{3(\frac{2p^+ - p^-}{4} + \alpha)} \dx \right)^\frac{2}{3} \|c_1- c_2\|_6^2 \\
    &\hspace{4mm} + \epsilon \int_\Omega (1+ |\DD\vv_1|^2 + |\DD\vv_2|^2)^\frac{p(c_1)-2}{2} |\DD\vv_1-\DD\vv_2|^2 \dx. 
\end{aligned}
\end{equation}
To further estimate ${\rm P_2}$, we need to obtain additional regularity of the velocity field. From \eqref{Ipv2}, \eqref{uniformest1} and \eqref{uniformest2}, we can derive 
\begin{equation}\label{Ipvhigh}
\int_{I'}\calI_p(c^n_\delta,\vv^n_\delta)^\frac{5p^--6}{2-p^-} \le C,
\end{equation}
where the above constant $C$ does not depend on $n\in\mathbb{N}$ and $\delta>0$. Moreover, by applying the Sobolev embedding, we have
\begin{equation} \label{highregularityDv}
\|\overline{\DD}\vv_\delta^n\|_{3p^-}^{p^-}= \|(\overline{\DD}\vv_\delta^n)^\frac{p^-}{2})\|_6^2 \le C\|\nabla ((\overline{\DD}\vv_\delta^n)^\frac{p^-}{2})\|_2^2  + C\|(\overline{\DD}\vv_\delta^n)^\frac{p^-}{2})\|_{2}^{2} \le C \calI_p(c_\delta^n,\vv_\delta^n) +C\|\overline{\DD}\vv^n_\delta\|_{p^-}^{p^-}.
\end{equation}
By combining \eqref{convergences}, \eqref{Ipvhigh}, and \eqref{highregularityDv}, and applying the limiting arguments $n\rightarrow\infty$ and $\delta\rightarrow0$ as before, we may conclude that
\begin{equation}\label{more_reg_Dv}
\DD\vv \in L^{\frac{5p^--6}{2-p^-}p^-}(I';L^{3p^-}(\Omega)).
\end{equation}
In order to bound the terms in \eqref{P22} using \eqref{more_reg_Dv}, the exponents should satisfy
\begin{equation} \label{p+p-}
\frac{3}{2}(2p^+-p^-) + 6\alpha \le 3p^- \quad\text{and}\quad 2p^+ - p^- + 4\alpha \le \frac{5p^--6}{2-p^-} p^-,
\end{equation}
regarding the spatial integrability and the temporal integrability, respectively. Note that $\alpha$ can be chosen sufficiently small so that the condition $p^+ < \frac{2p^--2}{2-p^-} p^-$ ensures the validity of \eqref{p+p-}. Consequently, applying the Sobolev embedding, we can rewrite \eqref{P22} as
\begin{equation}\label{P222}
|{\rm P_2}| \le F(t) \|\nabla (c_1-c_2)\|_2^2 + \epsilon \int_\Omega (1+ |\DD\vv_1|^2 + |\DD\vv_2|^2)^\frac{p(c_1)-2}{2} |\DD\vv_1-\DD\vv_2|^2 \dx, 
\end{equation}
for some $F(t) \in L^1(I')$. By combining the estimates \eqref{P111} and \eqref{P222} with \eqref{uniN}, we get
\[
C\int_\Omega (1+ |\DD\vv_1|^2 + |\DD\vv_2|^2)^\frac{p(c_1)-2}{2}|\DD\vv_1 - \DD\vv_2|^2 \dx \le F(t) \|\nabla(c_1-c_2)\|_2^2 + {\rm P}.
\]
Furthermore, since $p^->\frac{7}{5}$, there exists $\ell\in(\frac{8}{5},2)$ satisfying
\[
\frac{2-p(c_1)}{2} \cdot \frac{2\ell}{2-\ell} \le \frac{12}{5}.
\]
Therefore, from $\vv_1, \vv_2 \in L^\infty(I';W^{1,\frac{12}{5}}(\Omega))$, we obtain
\[
\|(\overline{\DD}\vv_1)^\frac{2-p(c)}{2} + (\overline{\DD}\vv_2)^\frac{2-p(c)}{2} \|_{\frac{2\ell}{2-\ell}} \in L^\infty(I'),
\]
which, together with Lemma \ref{difference_q}, implies
\begin{equation}\label{unique_P}
    C\|\DD\vv_1 - \DD\vv_2\|_\ell^2 \le F(t) \|\nabla(c_1-c_2)\|_2^2 + {\rm P}.
\end{equation}

Next, we derive an estimate for \( \mathrm{Q} \) appearing in \eqref{P and Q}.  By using H\"older's inequality with the regularity $\vv_1 \in L^\infty(I';W^{1,\frac{12}{5}}(\Omega))$, we may write
\[
{\rm Q} = - \int_\Omega ((\vv_1-\vv_2)\cdot \nabla ) \vv_1 \cdot(\vv_1 - \vv_2)\dx  \le \|\vv_1 - \vv_2\|_\frac{24}{7}^2 \|\nabla \vv_1\|_\frac{12}{5} \le C \|\vv_1 - \vv_2\|_\frac{24}{7}^2.
\]
Since $\ell\in(\frac{8}{5},2)$ implies $2< \frac{24}{7}<\frac{3\ell}{3-\ell}$, the application of Gagliardo--Nirenberg inequality, Korn's inequality and Young's inequality yields
\begin{equation}\label{unique_Q}
    {\rm Q} \le C \|\vv_1 - \vv_2\|_2^{2\theta} \|\DD\vv_1 - \DD\vv_2 \|_\ell^{2(1-\theta)} \le C\|\vv_1 - \vv_2\|_2^2 + \epsilon\|\DD\vv_1 - \DD\vv_2\|_\ell^2,
\end{equation}
where $\frac{7}{24}= \frac{\theta}{2} + \frac{3-\ell}{3\ell}(1-\theta)$.
Hence \eqref{uniPQ} together with \eqref{unique_P} and \eqref{unique_Q} implies
\begin{equation}\label{unique_v_final}
\frac{1}{2} \frac{\rm d}{\dt} \|\vv_1 - \vv_2\|_2^2 + C \|\DD\vv_1 - \DD\vv_2\|_\ell^2 \le F(t) \|\nabla (c_1 - c_2)\|_2^2 + C\|\vv_1 - \vv_2 \|_2^2.
\end{equation}

As a second step, for the convection-diffusion equation, we take the difference of the equation \eqref{concentration} corresponding to $c_1$ and $c_2$ to get
\begin{equation} \label{c1c2equation}
    \pa_t(c_1-c_2) - \Delta(c_1-c_2) + {\rm div}\, (c_1\vv_1 - c_2 \vv_2) = 0.
\end{equation}
We then test $-\Delta (c_1-c_2)$ to \eqref{c1c2equation} to obtain
\begin{equation}\label{uniconcentration}
\begin{aligned}
   \frac{1}{2}\frac{\rm d}{\dt} \int_\Omega &|\nabla (c_1-c_2)|^2 \dx + \int_\Omega |\nabla^2 (c_1-c_2)|^2 \dx\\ 
    &\le C\int_\Omega|\vv_1 \cdot \nabla c_1 - \vv_2 \cdot \nabla c_2|^2 \dx= C\int_\Omega |(\vv_1-\vv_2) \cdot \nabla c_1  + \vv_2 \cdot (\nabla c_1 - \nabla c_2)|^2\dx\\
    &\le C \int_\Omega |(\vv_1 - \vv_2) \cdot \nabla c_1|^2\dx + C\int_{\Omega}|\vv_2 \cdot\nabla(c_1-c_2)|^2 dx \\
    &=: {\rm U_1} + {\rm U_2}.
\end{aligned}
\end{equation}
For the term ${\rm U_1}$, we first note that for any solution pair $(\vv,c)$ constructed in the previous section, $\vv \cdot \nabla c$ belongs to $ L^\infty(I';L^3(\Omega))$, since $\vv \in L^\infty(I',W^{1,\frac{12}{5}}(\Omega))\hookrightarrow L^\infty(I',L^{12}(\Omega))$ and $\nabla c \in L^\infty(I';L^4(\Omega))$. Moreover, since $c_0 \in W^{2,q_0}(\Omega)$ for some $q_0\geq4$, we can apply the maximal regularity theory (see, e.g., Theorem 5.4 and Theorem D.12 of \cite{james}) to \eqref{concentration}, which yields $\Delta c \in L^\nu(I';L^3(\Omega))$ for any $1\le \nu <\infty$. Therefore, from the standard Calder\'on--Zygmund estimate on a torus (see,
e.g., Theorem B.7 in \cite{james}), it follows that $\nabla c \in L^\nu(Q_{T'})$ for any $1\le \nu <\infty$. Now we shall use H\"older's inequality to get
\begin{equation*} 
{\rm U_1} \le \|\vv_1 - \vv_2\|_\frac{24}{7}^2 \|\nabla c_1\|_\frac{24}{5}^2.
\end{equation*}
Furthermore, by using the Gagliardo--Nirenberg inequality with $2<\frac{24}{7}<\frac{3\ell}{3-\ell}$, Korn's inequality and Young's inequality, we obtain 
\begin{align*}
{\rm U_1} &\le C \|\nabla c_1\|_\frac{24}{5}^2 \|\vv_1 - \vv_2\|_2^{2\theta} \|\DD\vv_1 - \DD\vv_2\|_\ell^{2(1-\theta)}\\
&\le C\|\nabla c_1\|_\frac{24}{5}^\frac{2}{\theta} \|\vv_1- \vv_2\|_2^2 + \epsilon\|\DD\vv_1 - \DD\vv_2\|_\ell^2,
\end{align*}
where $\frac{7}{24} = \frac{\theta}{2} + \frac{3-\ell}{3\ell}(1-\theta)$.
Since $\nabla c_1 \in L^\nu(Q_{T'})$ for any $1 \le \nu < \infty$, we conclude that
\[
{\rm U_1} \le G(t) \|\vv_1- \vv_2\|_2^2 + \epsilon\|\DD\vv_1 - \DD\vv_2\|_\ell^2,
\]
for some $G(t) \in L^1(T')$. For the second term $\rm U_2$, by H\"older's inequality and the Sobolev embedding, there holds
\[
{\rm U_2} \le \|\vv_2\|_{12}^2 \|\nabla c_1 - \nabla c_2\|_\frac{12}{5}^2 \le \|\nabla \vv_2\|_\frac{12}{5}^2\|\nabla c_1- \nabla c_2\|_\frac{12}{5}^2 \le C\|\nabla c_1- \nabla c_2\|_\frac{12}{5}^2,
\]
where we have used $\vv \in L^\infty(I';W^{1,\frac{12}{5}}(\Omega))$ for the last inequality. Thus, by the Gagliardo--Nirenberg inequality and Young's inequality, we obtain
\[
{\rm U_2} \le C\|\nabla(c_1- c_2)\|_2^2 + \epsilon \|\nabla^2(c_1-c_2)\|_2^2.
\]
By collecting \eqref{unique_v_final} and \eqref{uniconcentration} with the above estimates for ${\rm U_1}$ and ${\rm U_2}$, we obtain 
\begin{align*}
 \frac{\rm d}{\dt} &\|\vv_1 - \vv_2\|_2^2 +  \frac{\rm d}{\dt} \|\nabla(c_1-c_2)\|_2^2 +C \|\nabla^2(c_1-c_2)\|_2^2 + C \|\DD\vv_1 - \DD\vv_2\|_\ell^2 \\
&\le \widetilde{F}(t) \|\nabla (c_1 - c_2)\|_2^2 + \widetilde{G}(t)\|\vv_1 - \vv_2 \|_2^2,
\end{align*}
for some $\widetilde{F}(t), \widetilde{G}(t) \in L^1(I')$. Finally, if we apply Gr\"onwall's inequality to the above inequality, we get
\[
\vv_1-\vv_2 =0, \quad \nabla(c_1-c_2) =0.
\]
For the concentrations, the Poincar\'e inequality implies that $c_1-c_2=0$. This completes the proof of the uniqueness theorem.

\section{Two-dimensional problem}\label{sec:2D}

\subsection{Main theorems in two dimensions}
While the primary objective of this paper is to establish the local-in-time existence and uniqueness of strong solutions on the three-dimensional torus, the analytical framework developed herein is readily adaptable, in a simplified form, to the two-dimensional setting. In the latter case, the proof can be carried out under less restrictive conditions on the exponent. The key difference lies in the fact that, in the two-dimensional case, the term $\|\nabla \vv\|^3_3$ does not appear in \eqref{deltavtest} due to the combinatorial symmetry of the partial derivatives. In the proof for the three-dimensional case, considerable effort was devoted to handling the term $\|\DD\vv\|_{\frac{12}{5}}$, which arises from the term $\|\nabla \vv\|_3^3$. However, its absence in the two-dimensional setting eliminates the need for such treatment, thereby allowing significant simplification of the overall proof. For the sake of completeness, we outline the proof in the two-dimensional case in this section.

Now, let us consider the system of equations \eqref{main_sys}-\eqref{concentration} in a two-dimensional periodic domain $\Omega = [0,1]^2$. The main result concerning the local well-posedness in the two-dimensional domain is encapsulated in the following theorem. Note here that, in this case, the existence and uniqueness can be established within the whole shear-thinning regime, and the integrability condition on $c_0$ can also be relaxed.\begin{theorem} \label{maintheorem2d}
Suppose that $p: \R \to \R$ is a Lipschitz continuous function with $1 < p^- \leq p(\cdot)\leq p^+ \le 2$. Assume further that
\[
\|\vv_0\|_{2,2} + \|c_0\|_{2,q_0} + \|\fff\|_{L^\infty(I;W^{1,2}(\Omega))} + \|\pa_t \fff\|_{L^2(I;L^2(\Omega))}  \le C
\]
with $q_0>2$.
Then, there exists $T'>0$ with $0<T'<T$ such that the system \eqref{main_sys}-\eqref{concentration} admits a strong solution $(\vv,\pi, c)$ on the interval $I'=[0,T']$ satisfying 
\begin{align*}
    & \vv \in C(I';L^2(\Omega)) \cap L^\infty(I';W_{\rm div}^{1,2}(\Omega)) \cap L^{p^-}(I'; W^{2,p^-}(\Omega)),\\
    & c \in C(I';L^2(\Omega)) \cap L^\infty(I';W^{1,2}(\Omega)) \cap L^2(I';W^{2,2}(\Omega)),
    \\
    &\partial_t \vv \in L^\infty(I';L^2(\Omega)), \quad \pa_t c \in L^\infty(I';L^2(\Omega)), \quad \nabla\pi\in L^{\min\{p^-,\frac{2q_0}{2+q_0}\}}(\Omega).
\end{align*}
\end{theorem}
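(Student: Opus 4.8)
The plan is to reproduce the scheme of Sections~\ref{sec:galerkin}--\ref{sec:limit} while exploiting the simplifications that two space dimensions afford, so I would only spell out the differences. First I would regularize the concentration equation by mollifying $c_0$ and set up the same two-level Galerkin approximation \eqref{aux1}--\eqref{aux2}; passing $m\to\infty$ gives the intermediate problem \eqref{aux11}--\eqref{aux22} together with the parabolic regularity \eqref{regularity of cn} for $c_\delta^n$, and this step is verbatim the same. The only place the relaxed hypothesis enters here is that in two dimensions $q_0>2$ already yields $W^{1,q_0}(\Omega)\hookrightarrow L^\infty(\Omega)$ and $W^{2,q_0}(\Omega)\hookrightarrow W^{1,\infty}(\Omega)$, which is all the later estimates need from the concentration.

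The decisive point is in the a priori estimates. When testing \eqref{aux11} with $-\Delta\vv$, the convective contribution $\int_\Omega((\vv\cdot\nabla)\vv)\cdot\Delta\vv\dx$ vanishes for divergence-free periodic fields in two dimensions --- equivalently, the third-order quantity $\|\nabla\vv\|_3^3$ that appears on the right of \eqref{deltavtest} drops out by the combinatorial cancellation of the trilinear term (valid since $\nabla\cdot\vv=0$). Converting the time-derivative contribution into $\frac{\rm d}{\dt}\|\nabla\vv\|_2^2$ and invoking \eqref{P1}, one then obtains directly a clean differential inequality $\frac{\rm d}{\dt}\|\nabla\vv\|_2^2+C\calI_p(c,\vv)\le|{\rm A_2}|+\int_\Omega|\nabla\fff\cdot\nabla\vv|\dx$, with $\|\nabla\vv\|_2^2\simeq\|\overline{\DD}\vv\|_2^2-|\Omega|$ as the basic coercive quantity. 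Consequently the whole $\|\overline{\DD}\vv\|_\frac{12}{5}$-bookkeeping of the three-dimensional proof is unnecessary: Lemmas~\ref{Dvss}, \ref{patvdeltav}, \ref{DV24/5} and the $\calI_p^{5/3}$-estimate play no role. The term $|{\rm A_2}|$ is handled by repeating the argument of Lemma~\ref{A2B2} with $\|\overline{\DD}\vv\|_\frac{12}{5}$ replaced by $\|\overline{\DD}\vv\|_2$ (using $\|\eta\|_2^2\le\|\overline{\DD}\vv\|_2^2$ as there, since $p(\cdot)\le 2$) and with the two-dimensional Gagliardo--Nirenberg and Sobolev exponents --- this is precisely what closes for every $p^->1$. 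Differentiating \eqref{aux11} in time and testing with $\pa_t\vv$ similarly gives $\frac{\rm d}{\dt}\|\pa_t\vv\|_2^2+C\calJ_p(c,\vv)\le|{\rm B_2}|+\int_\Omega|\pa_t\fff\cdot\pa_t\vv|\dx+\int_\Omega|\pa_t((\vv\cdot\nabla)\vv)\cdot\pa_t\vv|\dx$, where the last integral is treated as in Lemma~\ref{gradv3} (using $\int_\Omega((\vv\cdot\nabla)\pa_t\vv)\cdot\pa_t\vv\dx=0$ and the two-dimensional Ladyzhenskaya inequality) and $|{\rm B_2}|$ again as in Lemma~\ref{A2B2}. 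Finally $\|\nabla c\|_{q_0}$ and $\|\pa_t c\|_{q_0}$ are controlled by testing \eqref{regularequationcn} and its time derivative against $-\nabla\cdot(|\nabla c|^{q_0-2}\nabla c)$ and $|\pa_t c|^{q_0-2}\pa_t c$, exactly as in Lemmas~\ref{gradcqlemma} and \ref{patclemma} but with the two-dimensional embeddings, for which the constraint relaxes from $q_0\ge4$ to $q_0>2$.

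Adding these inequalities produces a closed differential inequality for $\zeta:=\|\nabla\vv_\delta^n\|_2^2+\|\pa_t\vv_\delta^n\|_2^2+\|\nabla c_\delta^n\|_{q_0}^{q_0}+\|\pa_t c_\delta^n\|_{q_0}^{q_0}$ of the type $\zeta'\le\phi+C\zeta^{1+\alpha}$ with $\phi\in L^1(I)$ collecting the forcing terms. As in Section~\ref{uniform est 3D} one checks that $\zeta(0)$ is bounded independently of $n$ and $\delta$ --- using $\|\vv_0\|_{2,2}$, the bounds on $\|c_{0,\delta}\|_{1,q_0}$ and $\|c_{0,\delta}\|_{2,q_0}$, the regularity \eqref{regularity of cn}, and the two-dimensional Sobolev embeddings in place of those used in \eqref{gradS} --- so Lemma~\ref{Gronwalllemma} furnishes $T'\in(0,T)$ together with $(n,\delta)$-uniform bounds on $\zeta$ and on $\|\calI_p(c_\delta^n,\vv_\delta^n)\|_{L^1(I')}$, $\|\calJ_p(c_\delta^n,\vv_\delta^n)\|_{L^1(I')}$; combined with Lemma~\ref{DDv,Dv} this gives $\|\vv_\delta^n\|_{L^{p^-}(I';W^{2,p^-}(\Omega))}\le C$. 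The passages to the limit $n\to\infty$ and then $\delta\to0$ then follow Section~\ref{sec:limit}: Aubin--Lions compactness (with $W^{2,p^-}\hookrightarrow\hookrightarrow W^{1,2}$ valid for $p^->1$ in two dimensions, and $W^{2,2}\hookrightarrow\hookrightarrow W^{1,4}$) gives a.e.\ convergence of $\DD\vv$ and of $c$, Vitali's theorem identifies the limit of $\SSS(c,\DD\vv)$, the convective terms converge strongly, de Rham's theorem recovers $\nabla\pi$, and the initial data are identified by integrating the equations over $(0,\tau)$, letting the approximation parameters and then $\tau$ tend to $0$, using $\vv,c\in C(I';L^2(\Omega))$ and $\|c_{0,\delta}-c_0\|_2\to0$. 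The regularity $\nabla\SSS(c,\DD\vv)\in L^{\min\{p^-,\,2q_0/(q_0+2)\}}(Q_{T'})$ needed to pass to the strong form of the momentum equation (hence also for $\nabla\pi$) follows from \eqref{P1}: the first term of $\nabla\SSS$ is bounded by $|\nabla^2\vv|\in L^{p^-}$, while $(1+|\DD\vv|^2)^{\frac{p(\cdot)-1}{2}}\log(2+|\DD\vv|)$ lies in $L^\infty(I';L^{r}(\Omega))$ for every $r<2$ and pairs with $\nabla c\in L^\infty(I';L^{q_0}(\Omega))$ to produce the exponent $2q_0/(q_0+2)$.

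The only genuine work is bookkeeping: one must verify that, with the two-dimensional embeddings ($W^{1,2}(\Omega)\hookrightarrow L^{r}(\Omega)$ for every $r<\infty$), each interpolation and each Young-type absorption above closes under the sole hypotheses $p^->1$ and $q_0>2$. I expect no step to present a real obstacle, since the third-order velocity term --- the principal difficulty in three dimensions --- is absent here, and every remaining estimate is a dimension-adjusted version of one already carried out in Section~\ref{sec:galerkin} or in \cite{Diening2005}.
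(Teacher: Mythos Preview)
Your proposal is correct and follows essentially the same approach as the paper: the same regularization and two-level Galerkin scheme, the key observation that the trilinear term $\int_\Omega((\vv\cdot\nabla)\vv)\cdot\Delta\vv\dx$ vanishes in two dimensions so that $\|\nabla\vv\|_2^2$ replaces $\|\overline{\DD}\vv\|_{12/5}^{12/5}$ and Lemmas~\ref{Dvss}, \ref{patvdeltav}, \ref{DV24/5} become superfluous, the two-dimensional adaptations of Lemmas~\ref{A2B2}, \ref{gradcqlemma}, \ref{patclemma} under the relaxed constraint $q_0>2$, the local Gr\"onwall argument, and the passage to the limit with the $\nabla\SSS$ integrability yielding the exponent $\min\{p^-,2q_0/(q_0+2)\}$. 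The only cosmetic difference is that the paper controls the time-differentiated convective term via a separate lemma invoking $\|\pa_t\DD\vv\|_{4/(4-p^-)}\le C\calJ_p^{1/2}\|\overline{\DD}\vv^{(2-p^-)/2}\|_{4/p^-}$ (Lemma~7 of \cite{Diening2005}) rather than the Ladyzhenskaya inequality you mention, but both routes close in the same way.
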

In contrast to the three-dimensional case, the uniqueness theorem holds as follows without any additional assumptions on the exponent.

\begin{theorem} \label{uniqueness theorem2d}
    Under the assumptions of Theorem \ref{maintheorem2d}, the strong solution constructed in Theorem \ref{maintheorem2d} is unique. 
\end{theorem}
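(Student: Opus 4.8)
The plan is to follow the structure of the three-dimensional uniqueness argument from Section~\ref{sec:unique}, but to exploit the fact that in two dimensions we have the stronger regularity $\vv \in L^\infty(I';W^{1,2}_{\rm div}(\Omega))$ available from Theorem~\ref{maintheorem2d} — together with the full Sobolev range $W^{1,2}(\Omega) \hookrightarrow L^r(\Omega)$ for every $r < \infty$ in dimension two — so that no constraint relating $p^-$ and $p^+$ is needed. As before, let $(\vv_1,c_1)$ and $(\vv_2,c_2)$ be two strong solutions; take the difference of the momentum equations, test with $\vv_1 - \vv_2$, and split the stress difference as ${\rm P} = {\rm P_1} + {\rm P_2}$ exactly as in \eqref{uniN}. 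The monotonicity \eqref{P2} gives the lower bound \eqref{P111} for ${\rm P_1}$, and the mean value theorem plus the Lipschitz continuity of $p(\cdot)$ and the logarithmic bound $\log(1+|\DD\vv|^2) \le C(1+|\DD\vv|^2)^\alpha$ controls ${\rm P_2}$ by a term of the form $C(\int_\Omega (1+|\DD\vv_1|^2+|\DD\vv_2|^2)^{s(\frac{2p^+-p^-}{4}+\alpha)}\dx)^{1/s}\|c_1-c_2\|_{s'}^2$ plus $\epsilon$ times the monotone quantity, where now in two dimensions the H\"older exponents $s, s'$ can be chosen freely.

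The key simplification is the following: in two dimensions, the higher integrability \eqref{more_reg_Dv} still holds with the same proof (Sobolev embedding on $(\overline{\DD}\vv)^{p^-/2}$ combined with the $L^{5p^--6/(2-p^-)}$-in-time bound on $\calI_p$), but because $W^{1,2}(\Omega)$ embeds into every $L^r(\Omega)$ for $r<\infty$ in $d=2$, we get $\DD\vv \in L^{\rho}(I';L^{r}(\Omega))$ for a whole range that is wide enough to absorb $(1+|\DD\vv_1|^2+|\DD\vv_2|^2)^{(2p^+-p^-)/4+\alpha}$ in both space and time for any admissible $1 < p^- \le p^+ \le 2$ — the analogues of the inequalities \eqref{p+p-} become automatically satisfiable by choosing $s$ large and $\alpha$ small, with no interval restriction on $p^+$. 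Then, using $\vv_i \in L^\infty(I';W^{1,2}(\Omega))$, one picks $\ell \in (1,2)$ with $\frac{2-p(c_1)}{2}\cdot\frac{2\ell}{2-\ell}$ finite (any $\ell$ works here since $W^{1,2}$ sits in all $L^r$), so that Lemma~\ref{difference_q} yields $C\|\DD\vv_1 - \DD\vv_2\|_\ell^2 \le F(t)\|\nabla(c_1-c_2)\|_2^2 + {\rm P}$ with $F \in L^1(I')$. The convective term ${\rm Q}$ is handled as in the three-dimensional case: write ${\rm Q} = -\int_\Omega ((\vv_1-\vv_2)\cdot\nabla)\vv_1 \cdot (\vv_1-\vv_2)\dx$, use $\vv_1 \in L^\infty(I';W^{1,2})$ and H\"older with a Gagliardo--Nirenberg interpolation between $\|\vv_1-\vv_2\|_2$ and $\|\DD(\vv_1-\vv_2)\|_\ell$ — valid in two dimensions with room to spare — to get ${\rm Q} \le C\|\vv_1-\vv_2\|_2^2 + \epsilon\|\DD\vv_1-\DD\vv_2\|_\ell^2$, yielding the velocity inequality \eqref{unique_v_final} verbatim.

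For the concentration, take the difference \eqref{c1c2equation}, test with $-\Delta(c_1-c_2)$, and split the right-hand side into ${\rm U_1}$ and ${\rm U_2}$ as in \eqref{uniconcentration}. Here one uses that $\vv_i \cdot \nabla c_i \in L^\infty(I';L^{r})$ for some $r>2$ (from $\vv_i \in L^\infty(I';L^{r'})$ for all $r'<\infty$ and $\nabla c_i \in L^\infty(I';L^2)$ in two dimensions is already enough, and maximal regularity for \eqref{concentration} with $c_0 \in W^{2,q_0}$, $q_0>2$, upgrades $\nabla c_i$ to $L^\nu(Q_{T'})$ for all $\nu<\infty$). Then ${\rm U_1} \le G(t)\|\vv_1-\vv_2\|_2^2 + \epsilon\|\DD\vv_1-\DD\vv_2\|_\ell^2$ with $G \in L^1(I')$ and ${\rm U_2} \le C\|\nabla(c_1-c_2)\|_2^2 + \epsilon\|\nabla^2(c_1-c_2)\|_2^2$ by Sobolev embedding and Gagliardo--Nirenberg. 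Adding the velocity and concentration inequalities and absorbing the $\epsilon$-terms on the left produces
\[
\frac{\rm d}{\dt}\left(\|\vv_1-\vv_2\|_2^2 + \|\nabla(c_1-c_2)\|_2^2\right) \le \widetilde{F}(t)\|\nabla(c_1-c_2)\|_2^2 + \widetilde{G}(t)\|\vv_1-\vv_2\|_2^2
\]
with $\widetilde{F}, \widetilde{G} \in L^1(I')$; since the data at $t=0$ coincide, Gr\"onwall's inequality forces $\vv_1 = \vv_2$ and $\nabla(c_1-c_2) = 0$, and the Poincar\'e inequality (on mean-zero functions) gives $c_1 = c_2$. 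The main thing to check carefully — and where the only real work lies — is verifying that the exponent bookkeeping in the estimate of ${\rm P_2}$ closes \emph{without} any hypothesis linking $p^-$ and $p^+$; this is exactly where the two-dimensional Sobolev embedding $W^{1,2} \hookrightarrow L^r$ for all finite $r$ does the job that the more delicate interpolation \eqref{p+p-} was needed for in three dimensions.
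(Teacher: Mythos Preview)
Your overall plan mirrors the paper's approach and is essentially right, but there is a genuine gap in the step where you claim that ``the higher integrability \eqref{more_reg_Dv} still holds with the same proof (Sobolev embedding on $(\overline{\DD}\vv)^{p^-/2}$ combined with the $L^{(5p^--6)/(2-p^-)}$-in-time bound on $\calI_p$)''. In two dimensions the exponent $(5p^--6)/(2-p^-)$ is nonpositive for $p^- \in (1,6/5]$, so this bound is meaningless there; and even for larger $p^-$, the estimate \eqref{Ipvhigh} in the paper was derived from the three-dimensional inequality \eqref{Ipv2}, which is not established in the two-dimensional section. What is actually available in 2D is only $\calI_p \in L^1(I')$ from \eqref{uni_est_1}.

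The paper fixes this as follows: from $\calI_p \in L^1(I')$ and the 2D embedding $W^{1,2}(\Omega) \hookrightarrow L^r(\Omega)$ for every finite $r$, one gets $\DD\vv \in L^{p^-}(I';L^r(\Omega))$ for all $r<\infty$; interpolating this with $\DD\vv \in L^\infty(I';L^2(\Omega))$ yields $\DD\vv \in L^{2+p^--\delta_2}(Q_{T'})$ for small $\delta_2>0$. The exponent condition on ${\rm P_2}$ then becomes $(2+\delta_1)(p^+-\tfrac{p^-}{2}+2\alpha) \le 2+p^--\delta_2$, which for small $\alpha,\delta_1,\delta_2$ reduces to $p^+ < 1+p^-$ --- automatic when $1<p^-\le p^+\le 2$. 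This is the correct 2D replacement for \eqref{p+p-}, and it does not use any higher temporal integrability of $\calI_p$.

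A secondary slip: you write ``any $\ell$ works here since $W^{1,2}$ sits in all $L^r$'', but the relevant object is $\overline{\DD}\vv$, for which you only have $L^\infty(I';L^2(\Omega))$, not $W^{1,2}$. One needs $\tfrac{2-p(c_1)}{2}\cdot\tfrac{2\ell}{2-\ell}\le 2$, and also $\ell>\tfrac{4}{3}$ so that the Gagliardo--Nirenberg interpolation $2<4<\tfrac{2\ell}{2-\ell}$ used for ${\rm Q}$ and ${\rm U_1}$ is valid; the paper takes $\ell\in(\tfrac{4}{3},2)$, which is feasible precisely because $p^->1$. With these two corrections your plan goes through.
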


\subsection{Construction of Galerkin system and a priori estimates}
In the rest of this section, we aim to derive some {\textit{a priori}} estimates which will be used to prove Theorem \ref{maintheorem2d} and Theorem \ref{uniqueness theorem2d}. To do that, as in the three-dimensional case, we consider a regularized system
\begin{equation*} 
\begin{aligned}
\pa_t\vv_\delta + {\rm div}(\vv_\delta \otimes \vv_\delta) - {\rm div}\SSS(c_\delta, \DD \vv_\delta)  &= - \nabla \pi_\delta  + \fff & \text{in } Q_T,  \\
{\rm div}\, \vv_\delta &= 0 &  \text{in } Q_T, \\ 
\pa_t c_\delta + {\rm div}(c_\delta \vv_\delta) -\Delta c_\delta & = 0 & \text{in } Q_T,
\end{aligned}
\end{equation*}
with the initial conditions $\vv_\delta(\cdot,0) =\vv_0$ and $c_\delta(\cdot,0) = c_{0,\delta} := \eta_\delta * c_0$ where $\eta_\delta$ is standard molifier with $\delta>0$. Furthermore, we also consider the Galerkin system
\begin{align*}
\innerproduct{\pa_t \vv_\delta^{n,m}}{\ww_i} + \innerproduct{(\vv_\delta^{n,m}\cdot \nabla) \vv_\delta^{n,m}}{\ww_i} + \innerproduct {\SSS_\delta^{n,m}}{\DD \ww_i} &= \innerproduct{\fff}{\ww_i},  \\ 
\innerproduct{\pa_tc_\delta^{n,m}}{z_j} + \innerproduct{\vv_\delta^{n,m} \cdot \nabla c_\delta^{n,m}}{z_j} + \innerproduct{\nabla c_\delta^{n,m}}{\nabla z_j} &=0, 
\end{align*}
for all $i=1,\ldots n$ and $j=1,\ldots m$. Here, we write
\[
\vv_\delta^{n,m} = \sum_{i=1}^n \alpha_i^{n,m}(t)\ww_i \quad\text{and}\quad c_\delta^{n,m} = \sum_{j=1}^m \beta_j^{n,m}(t)z_j,
\]
with $\ww_i$ forming a basis of $W^{s,2}_{\rm div}(\Omega)$, $s>2$ and $z_j$ forming a basis for $W^{1,2}(\Omega)$. The initial conditions can be written as
\begin{equation*}
    \vv_\delta^{n,m}(\cdot,0) = P^n\vv_0 \quad\text{and}\quad c_\delta^{n,m}(\cdot,0) = P^mc_{0,\delta},
\end{equation*}
where $P^n$, $P^m$ are the orthogonal projection onto $A_n:={\rm span}\{\ww_1,\ldots,\ww_n\}$ and $B_m:={\rm span}\{z_1,\ldots,z_m\}$, respectively. The existence of the approximate system again follows from Carath\'eodory's theorem and the relevant uniform estimates. Similar to Section \ref{sec:galerkin}, we first let $m \to \infty$ for the convection-diffusion equation. Then, for all $i=1,\ldots,n$, there holds
\begin{align}
\innerproduct{\pa_t \vv_\delta^n}{\ww_i} + \innerproduct{(\vv_\delta^n\cdot \nabla) \vv_\delta^n}{\ww_i} + \innerproduct {\SSS_\delta^n}{\DD \ww_i} &= \innerproduct{\fff}{\ww_i}, \label{aux2d1}\\ 
\innerproduct{\pa_tc_\delta^n}{\varphi} + \innerproduct{\vv_\delta^n \cdot \nabla c_\delta^n}{\varphi} + \innerproduct{\nabla c_\delta^n}{\nabla \varphi} &= 0, \label{aux2d2}
\end{align}
for arbitrary $\varphi \in W^{1,2}(\Omega)$ with the initial conditions
\begin{equation*}
    \vv_\delta^n(\cdot,0) = P^n\vv_0 \quad\text{and}\quad c_\delta^n(\cdot,0) = c_{0,\delta}.
\end{equation*}
Moreover, as before, $c^n_\delta$ satisfies
\begin{equation*} 
c_\delta^n, \, \pa_t c_\delta^n, \,
\pa_t^2 c_\delta^n \in L^\infty(I;C^\infty(\Omega)),
\end{equation*}
and the equation \eqref{aux2d2} can be expressed in a pointwise manner:
\begin{equation}\label{regularequationcn2d}
    \pa_t c_\delta^n + \vv_\delta^n \cdot \nabla c_\delta^n - \Delta c_\delta^n = 0 \quad \text{a.e. in } Q_T.
\end{equation}

Before we derive uniform estimates, we begin by considering the lemmas presented in Section \ref{sec:prelim}. To begin with, we observe that the result of Lemma \ref{DDv,Dv} and Lemma \ref{difference_q} still hold in the two-dimensional setting, as can be seen from the proofs in \cite{Diening2005}. We then derive {\textit{a priori}} estimates analogous to those in Section \ref{uniform est 3D}. Again, for the sake of simplicity, we will omit the indices $n\in\mathbb{N}$ and $\delta>0$. We start with testing $\vv$ to \eqref{aux2d1} to get
\begin{equation} \label{best2d}
    \sup_{0\le t \le T} \norm{\vv(t)}_2^2 + \int_0^T \int_{\Omega} \left(|\nabla \vv|^{p(c)} + |\SSS(c,\DD\vv)|^{(p(c))'} \right)\dx\dt \le C.
\end{equation}
Next, we shall test $-\Delta \vv$ to \eqref{aux2d1}, and we may write
\begin{equation}\label{2D_v_int_est}
\int_{\Omega} \pa_t \vv \cdot (-\Delta \vv) \dx - \int_{\Omega} ((\vv \cdot \nabla) \vv) \cdot (-\Delta \vv) \dx + \int_{\Omega} \SSS(c,\DD \vv):\DD (-\Delta \vv) \dx = \int_{\Omega} \fff \cdot (-\Delta \vv) \dx.
\end{equation}
As we mentioned at the beginning of this section, if we integrate the convection term in \eqref{2D_v_int_est} by parts, we can see that it vanishes in two dimensions and the term $\|\nabla\vv\|_3^3$ in \eqref{deltavtest} does not appear, which allows us to avoid handling the term $\|\DD\vv\|_{\frac{12}{5}}$.
Therefore, from \eqref{2D_v_int_est}, we may write
\begin{equation} \label{Ipv2d}
\frac{1}{2} \frac{\rm d}{\dt} \|\nabla \vv\|_2^2 + C\calI_p(c,\vv) \le |{\rm A}| + \int_\Omega |\nabla \fff \cdot \nabla \vv| \dx ,
\end{equation}
where 
\[
{\rm A} :=\int_{\Omega}  \pa_{x_k} c  \left(\frac{\pa\SSS(c,\DD)}{\pa c} : \pa_{x_k}\DD \vv\right)
  \dx.
\]
Furthermore, let us next take the time derivative to \eqref{aux2d1} and test $\pa_t \vv$. Then we have
\begin{equation} \label{Jpv2d}
\frac{1}{2} \frac{\rm d}{\dt} \|\pa_t \vv\|_2^2 + C \calJ_p(c,\vv) \le |{\rm B}| + \int_\Omega |\pa_t \fff\cdot  \pa_t \vv | \dx + \int_\Omega |\pa_t((\vv \cdot \nabla) \vv) \cdot \pa_t \vv| \dx,
\end{equation}
where
\[
{\rm B}:= \int_\Omega \pa_t c \left(\frac{\pa \SSS(c,\DD)}{\pa c} : (\DD \pa_t \vv)\right)  \dx.
\]
Note that, since the terms $\|\nabla \vv\|_3$, $\|\DD\vv\|_\frac{12}{5}$, and $\int_\Omega \pa_t \vv \cdot \Delta \vv  \mathrm{d}x$ do not appear in the two-dimensional setting, Lemma \ref{Dvss}, Lemma \ref{patvdeltav}, and the first part of Lemma \ref{gradv3} are no longer required. 
We shall modify the second inequality in Lemma~\ref{gradv3} to better suit the two-dimensional setting. To do that, we first introduce the following lemma concerning the estimates for the time derivative of the velocity gradient.
\begin{lemma}[Lemma 7 in \cite{Diening2005}] \label{DDv Dpatv}
    Let $\calJ_p(c,\vv)$ be the energy defined in \eqref{special_energy} and $1\le r \le 2$. Then for sufficiently smooth $\vv$, we have 
    \begin{equation*}
        \|\pa_t\DD  \vv\|_r \le C\calJ_p(c,\vv)^\frac{1}{2} \|\overline{\DD}\vv^\frac{2-p^-}{2}\|_\frac{2r}{2-r}.
    \end{equation*}
\end{lemma}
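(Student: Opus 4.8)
The plan is to obtain the bound from a single application of Hölder's inequality, arranged so that the weighted integrand of $\calJ_p(c,\vv)$ is reconstituted. For a.e.\ $t\in I$ one writes the pointwise identity
\[
|\pa_t\DD\vv|^r=\Big((\overline{\DD}\vv)^{\frac{p(c)-2}{2}}\,|\pa_t\DD\vv|\Big)^{r}\,(\overline{\DD}\vv)^{\frac{(2-p(c))r}{2}},
\]
where the two powers of $\overline{\DD}\vv$ cancel. Integrating over $\Omega$ and applying Hölder's inequality with the conjugate exponents $\tfrac2r$ and $\tfrac{2}{2-r}$ — admissible precisely because $1\le r\le2$ — gives
\[
\int_\Omega|\pa_t\DD\vv|^r\dx\le\Big(\int_\Omega(\overline{\DD}\vv)^{p(c)-2}|\pa_t\DD\vv|^2\dx\Big)^{\frac r2}\Big(\int_\Omega(\overline{\DD}\vv)^{\frac{(2-p(c))r}{2-r}}\dx\Big)^{\frac{2-r}{2}},
\]
so that after taking the $\tfrac1r$-th power the first factor is exactly $\calJ_p(c,\vv)^{1/2}$.

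It then remains to dominate the second factor by $\big\|\overline{\DD}\vv^{\frac{2-p^-}{2}}\big\|_{\frac{2r}{2-r}}$. For this I would invoke the elementary facts that $\overline{\DD}\vv=(1+|\DD\vv|^2)^{1/2}\ge1$ pointwise and that $2-p(c)\le2-p^-$ by the hypothesis $p^-\le p(\cdot)$; combining them gives the pointwise estimate $(\overline{\DD}\vv)^{2-p(c)}\le(\overline{\DD}\vv)^{2-p^-}$ on $\Omega$, whence
\[
\Big(\int_\Omega(\overline{\DD}\vv)^{\frac{(2-p(c))r}{2-r}}\dx\Big)^{\frac{2-r}{2r}}\le\Big(\int_\Omega(\overline{\DD}\vv)^{\frac{(2-p^-)r}{2-r}}\dx\Big)^{\frac{2-r}{2r}}=\big\|\overline{\DD}\vv^{\frac{2-p^-}{2}}\big\|_{\frac{2r}{2-r}}.
\]
Chaining the two displays yields the asserted inequality, in fact with $C=1$. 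The borderline case $r=2$, for which $\tfrac{2r}{2-r}=\infty$, is handled directly: $\int_\Omega|\pa_t\DD\vv|^2\dx\le\|(\overline{\DD}\vv)^{2-p(c)}\|_\infty\,\calJ_p(c,\vv)\le\|(\overline{\DD}\vv)^{2-p^-}\|_\infty\,\calJ_p(c,\vv)$, which is the statement read with the $L^\infty$ norm.

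There is no genuine obstacle in this argument — the lemma is essentially a bookkeeping exercise with Hölder's inequality. The only point worth a word of care is that $p$ is concentration-dependent, so the exponent $2-p(\cdot)$ is not constant; but since $\overline{\DD}\vv\ge1$, replacing $p(\cdot)$ by its infimum $p^-$ only enlarges that exponent, hence the integrand, and the estimate holds uniformly in $c$. Finally, all integrals above are finite for the sufficiently smooth $\vv$ assumed in the statement, which legitimizes the manipulations.
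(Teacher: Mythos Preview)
Your argument is correct: the pointwise splitting followed by H\"older's inequality with exponents $\tfrac{2}{r}$ and $\tfrac{2}{2-r}$, and then the monotonicity $(\overline{\DD}\vv)^{2-p(c)}\le(\overline{\DD}\vv)^{2-p^-}$ since $\overline{\DD}\vv\ge1$, yields the claim exactly as stated. The paper does not supply its own proof of this lemma but simply cites it from \cite{Diening2005}; your proof is the standard one and is essentially what the cited reference contains.
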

We then state the modified version of the second statement of Lemma \ref{gradv3} as follows.
\begin{lemma}
    There exists constants $R_1, R_2 >2$, such that for sufficiently small $\epsilon>0$, there holds
    \begin{equation} \label{patv gradv 2d}
\int_\Omega \pa_t((\vv \cdot \nabla) \vv) \cdot \pa_t \vv \dx \le \epsilon \calJ_p(c,\vv) + C \left(\|\pa_t \vv\|_2^{R_{1}} + \|\nabla \vv\|_2^{R_{2}} +1 \right).
\end{equation}
\end{lemma}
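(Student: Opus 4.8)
The plan is to differentiate the convective term, discard the part that vanishes by incompressibility, and estimate the surviving quadratic expression by a Gagliardo--Nirenberg interpolation that routes one spatial derivative of $\pa_t\vv$ into the energy $\calJ_p(c,\vv)$ through Lemma~\ref{DDv Dpatv}.

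First I would expand $\pa_t((\vv\cdot\nabla)\vv)=(\pa_t\vv\cdot\nabla)\vv+(\vv\cdot\nabla)\pa_t\vv$ and test with $\pa_t\vv$. The contribution of the second summand is $\int_\Omega (\vv\cdot\nabla)\pa_t\vv\cdot\pa_t\vv\dx=\tfrac12\int_\Omega \vv\cdot\nabla|\pa_t\vv|^2\dx=-\tfrac12\int_\Omega ({\rm div}\,\vv)|\pa_t\vv|^2\dx=0$, so it remains to bound $\int_\Omega|\pa_t\vv|^2|\nabla\vv|\dx$. By Hölder in two dimensions (with $\tfrac14+\tfrac14+\tfrac12=1$) this is at most $\|\pa_t\vv\|_4^2\|\nabla\vv\|_2$. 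Now fix $r\in(\tfrac43,2)$ close enough to $\tfrac43$ that $\tfrac{(2-p^-)r}{2-r}\le2$; such $r$ exists precisely because $p^->1$, since as $r\downarrow\tfrac43$ the left side tends to $4-2p^-<2$. The 2D Gagliardo--Nirenberg inequality gives $\|\pa_t\vv\|_4\le C\|\nabla\pa_t\vv\|_r^{a}\|\pa_t\vv\|_2^{1-a}+C\|\pa_t\vv\|_2$ with $a=\tfrac{r}{4(r-1)}\in(\tfrac12,1)$, where it is crucial that $a<1$ because $r>\tfrac43$. By Korn's inequality $\|\nabla\pa_t\vv\|_r\le C\|\pa_t\DD\vv\|_r$, and then Lemma~\ref{DDv Dpatv} yields $\|\pa_t\DD\vv\|_r\le C\calJ_p(c,\vv)^{1/2}\|(\overline{\DD}\vv)^{\frac{2-p^-}{2}}\|_{\frac{2r}{2-r}}$. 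Since $\tfrac{2-p^-}{2}\cdot\tfrac{2r}{2-r}\le2$ and $|\Omega|=1$, the last factor is bounded by $C(1+\|\nabla\vv\|_2^{\frac{2-p^-}{2}})$, using $(1+t^2)^{\theta}\le1+t^{2\theta}$ for $\theta\in(0,1]$, Hölder on the bounded domain, and Korn once more.

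Combining these estimates produces a bound of the form $\calJ_p(c,\vv)^{a}\,(1+\|\nabla\vv\|_2^{\frac{2-p^-}{2}})^{2a}\|\pa_t\vv\|_2^{2(1-a)}\|\nabla\vv\|_2+C\|\pa_t\vv\|_2^2\|\nabla\vv\|_2$. Because $a<1$, Young's inequality with exponents $\tfrac1a$ and $\tfrac1{1-a}$ absorbs the first factor as $\epsilon\calJ_p(c,\vv)$ plus a term that is a product of strictly positive powers of $\|\pa_t\vv\|_2$ and $\|\nabla\vv\|_2$; a further application of Young's inequality then rewrites everything as $\epsilon\calJ_p(c,\vv)+C(\|\pa_t\vv\|_2^{R_1}+\|\nabla\vv\|_2^{R_2}+1)$, and by choosing the Young exponent near $1$ one checks that $R_1,R_2>2$ can be arranged. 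I expect the only genuinely delicate point to be the choice of $r$: it must lie in the narrow window that simultaneously makes $a<1$ (so that $\calJ_p(c,\vv)^{a}$ is absorbable) and keeps $\tfrac{(2-p^-)r}{2-r}\le2$ (so that $(\overline{\DD}\vv)^{\frac{2-p^-}{2}}$ is controlled by $\|\nabla\vv\|_2$), and it is exactly nonemptiness of this window that uses the hypothesis $p^->1$; everything else is routine exponent bookkeeping.
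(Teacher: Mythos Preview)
Your argument is correct and follows essentially the same route as the paper: discard the $(\vv\cdot\nabla)\pa_t\vv$ piece by incompressibility, bound the remainder by $\|\pa_t\vv\|_4^2\|\nabla\vv\|_2$, interpolate $\|\pa_t\vv\|_4$ between $\|\pa_t\vv\|_2$ and $\|\pa_t\DD\vv\|_r$ via Gagliardo--Nirenberg and Korn, invoke Lemma~\ref{DDv Dpatv}, and finish with Young. The only cosmetic difference is that the paper fixes the specific value $r=\frac{4}{4-p^-}$, which makes your constraint $\frac{(2-p^-)r}{2-r}\le 2$ an equality, whereas you leave $r$ in the admissible window.
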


\begin{proof}
   By applying H\"older's inequality, Korn's inequality and the Gagliardo--Nirenberg interpolation inequality, we have
   \[
   |\innerproduct{\pa_t \vv \cdot \nabla \vv}{\pa_t \vv}| \le \|\pa_t \vv\|_4^2 \|\nabla \vv\|_2 \le C\|\pa_t\vv\|_2^{2\theta} \|\pa_t \DD\vv\|_\frac{4}{4-p^-}^{2(1-\theta)} \|\nabla \vv\|_2,
   \]
   for some $\theta\in(0,1)$. Then Lemma \ref{DDv Dpatv} implies
   \[
   |\innerproduct{\pa_t \vv \cdot \nabla \vv}{\pa_t \vv}| \le C \|\pa_t \vv\|_2^{2\theta} \|\nabla \vv\|_2 \calJ_p(c,\vv)^{1-\theta} (1+ \|\DD\vv\|_2)^{(2-p^-)(1-\theta)}.
   \]
   Hence, it follows from Young's inequality that
   \[
   |\innerproduct{\pa_t \vv \cdot \nabla \vv}{\pa_t \vv}| \le \epsilon \calJ_p(c,\vv) + \|\pa_t \vv\|_2^2\|\nabla \vv\|_2^\frac{1}{\theta}( 1+ \|\nabla \vv\|_2)^{(2-p^-) \frac{1-\theta}{\theta}}.
   \]
   By applying Young's inequality once more, we obtain the desired result.
\end{proof}
Next, we shall derive estimates for ${\rm A}$ and ${\rm B}$ in two dimensions by slightly modifying the proof of Lemma \ref{A2B2}, replacing the term $\|\overline{\DD}\vv\|_{\frac{12}{5}}$ with $\|\nabla \vv\|_2$. 
\begin{lemma}
    Let $q>2$. Then there exist constants $R_3,R_4,R_5>q$ and $R_6,R_7 > 2$ such that
    \begin{align}
    &|{\rm{A}}| \le C\|\nabla c\|_q^{R_3} + C \|\nabla \vv\|_2^{R_6} + \epsilon \calI_p(c,\vv) + C, \label{A22d} \\
    &|{\rm{B}}| \le C\|\pa_t c\|_q^{R_4} + C\|\nabla c\|_q^{R_5} +  C \|\nabla\vv\|_2^{R_7} + \epsilon \calI_p(c,\vv) + \epsilon \calJ_p(c,\vv) + C. \label{B22d}
    \end{align}
\end{lemma}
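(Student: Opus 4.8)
The plan is to rerun the proof of Lemma~\ref{A2B2} essentially verbatim, the only structural changes being that the three-dimensional Gagliardo--Nirenberg and Sobolev exponents are replaced by their two-dimensional counterparts, and that at the very end the quantity $\|\overline{\DD}\vv\|_{\frac{12}{5}}$ is replaced by $\|\nabla\vv\|_2$ via Korn's inequality.

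For $|{\rm A}|$, I would first use the third estimate in \eqref{P1} and factor $(1+|\DD\vv|^2)^{\frac{p(c)-1}{2}} = \eta\,(\overline{\DD}\vv)^{\frac{p(c)-2}{2}}$ with $\eta := (\overline{\DD}\vv)^{\frac{p(c)}{2}}$, so that H\"older's inequality gives
\[
|{\rm A}| \le C\|\nabla c\|_q\,\|\log(2+|\DD\vv|)\|_\alpha\,\|\eta\|_\beta\,\big\|(\overline{\DD}\vv)^{\frac{p(c)-2}{2}}\nabla\DD\vv\big\|_2,
\]
where $1 = \tfrac1q + \tfrac1\alpha + \tfrac1\beta + \tfrac12$, $\alpha>2$ is chosen large, and $\beta$ is determined by this identity. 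Since $q>2$, one may take $\beta\in(2,\infty)$ finite; this is exactly the point at which the two-dimensional setting relaxes the hypothesis $q>3$ from Lemma~\ref{A2B2} down to $q>2$, because in the plane $\beta$ no longer has to stay below $6$. Young's inequality absorbs the last factor into $\epsilon\calI_p(c,\vv)$. As in \eqref{gradetaL2}, differentiating $\eta$ yields $\|\nabla\eta\|_2 \le C\calI_p(c,\vv)^{1/2} + C\|\nabla c\|_q\|\eta\|_\beta\|\log(2+|\DD\vv|)\|_\alpha$; the two-dimensional Gagliardo--Nirenberg inequality gives $\|\eta\|_\beta^2 \le C\|\eta\|_2^{2\theta}\|\nabla\eta\|_2^{2(1-\theta)} + C\|\eta\|_2^2$ with $\theta = \tfrac2\beta\in(0,1)$; inserting the gradient bound and absorbing the self-referential factor $\|\eta\|_\beta^{2(1-\theta)}$ via Young's inequality reproduces the analogue of \eqref{eta beta2}, and hence
\[
|{\rm A}| \le C\big(1 + \|\nabla c\|_q^{2/\theta}\|\log(2+|\DD\vv|)\|_\alpha^{2/\theta}\big)\|\eta\|_2^2 + \epsilon\calI_p(c,\vv).
\]

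To finish, I would use $\|\eta\|_2^2 \le \|\overline{\DD}\vv\|_2^2 \le C + C\|\nabla\vv\|_2^2$ by Korn's inequality, together with $\log(2+|\DD\vv|) \le C(1+|\DD\vv|)^{2/\alpha}$, which gives $\|\log(2+|\DD\vv|)\|_\alpha \le C\|\overline{\DD}\vv\|_2^{2/\alpha} \le C(1+\|\nabla\vv\|_2)^{2/\alpha}$; a final Young's inequality, with its exponents chosen appropriately and each resulting power of $\|\nabla\vv\|_2$ bounded by $1+\|\nabla\vv\|_2^{R_6}$ for $R_6$ large, then yields \eqref{A22d} with $R_3>q$ and $R_6>2$. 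The estimate for $|{\rm B}|$ is obtained in exactly the same way: the third inequality in \eqref{P1} now leads to $|{\rm B}| \le C\|\pa_t c\|_q\|\log(2+|\DD\vv|)\|_\alpha\|\eta\|_\beta\,\calJ_p(c,\vv)^{1/2}$, Young's inequality peels off $\epsilon\calJ_p(c,\vv)$, and the identical treatment of $\|\eta\|_\beta^2$ (which still introduces $\calI_p(c,\vv)$ and $\|\nabla c\|_q$ through the bound on $\|\nabla\eta\|_2$) combined with Korn's inequality produces \eqref{B22d}. The bulk of the work is the bookkeeping of H\"older and Young exponents; the only step requiring a little care is the absorption of $\|\eta\|_\beta^{2(1-\theta)}$ into the left-hand side, which is legitimate precisely because $\theta>0$, i.e.\ because $\beta<\infty$, automatic in two dimensions. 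No genuinely new difficulty arises compared with Lemma~\ref{A2B2}; if anything the argument is simpler, since the term $\|\nabla\vv\|_3^3$, and with it the need to work with $\|\overline{\DD}\vv\|_{\frac{12}{5}}$ in place of $\|\nabla\vv\|_2$, has already disappeared from \eqref{2D_v_int_est}.
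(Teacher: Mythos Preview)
Your proposal is correct and follows essentially the same route as the paper's proof: both rerun the argument of Lemma~\ref{A2B2} with the two-dimensional Gagliardo--Nirenberg exponent $\theta=\tfrac{2}{\beta}$ in place of $\theta=\tfrac{6-\beta}{2\beta}$, and both replace $\|\overline{\DD}\vv\|_{12/5}$ by $\|\nabla\vv\|_2$ at the end via $\|\eta\|_2^2\le\|\overline{\DD}\vv\|_2^2\le 1+\|\nabla\vv\|_2^2$ and $\log(2+|\DD\vv|)\le C(1+|\DD\vv|)^{2/\alpha}$. One cosmetic point: the bound $\|\overline{\DD}\vv\|_2^2\le C+C\|\nabla\vv\|_2^2$ does not use Korn's inequality---it is immediate from $|\DD\vv|\le|\nabla\vv|$ pointwise---but this does not affect the argument.
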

\begin{proof}
    For the first term $\rm A$,  as we did in the proof of Lemma~\ref{A2B2}, we proceed as
    \[
    |{\rm A}| \le C \|\nabla c\|_q^2\|\log(2+ |\DD\vv|)\|_\alpha^2 \|\overline{\DD}\vv\|_\beta^2 + \epsilon\calI_p(c,\vv),
    \]
    where $\frac{1}{2} = \frac{1}{q} + \frac{1}{\alpha} + \frac{1}{\beta}$ with large $\alpha>2$ and $\beta>1$ being determined later. As before, we define $\eta:= \overline{\DD}\vv^\frac{p(c)}{2}$ and apply the Gagliardo--Nirenberg inequality to get
    \[
    \|\eta\|_\beta^2 \le C\|\eta\|_2^{2\theta}\|\nabla \eta\|_2^{2(1-\theta)} + C\|\eta\|_2^2,
    \]
    where $\theta = \frac{2}{\beta}$ and $\beta \in (2,\infty)$ is chosen to satisfy $\frac{1}{2} = \frac{1}{q} + \frac{1}{\alpha} + \frac{1}{\beta}$. We follow the procedure for the three-dimensional proof with the estimate $\|\eta\|_2^2 \le \|\overline{\DD}\vv\|_2^2 \le 1+ \|\nabla \vv\|_2^2$ and $\log(2+ |\DD\vv|) \le C(1+ |\DD\vv|)^\frac{2}{\alpha} \le C (1+ |\nabla \vv|)^\frac{2}{\alpha}$ to conclude \eqref{A22d}. The result for $\rm B$ can be obtained in a straightforward manner, similarly to the proof of the estimate for $\rm B_2$ in Lemma~\ref{A2B2}.
\end{proof}

Finally, we need to handle the terms $\nabla c$ and $\pa_t c$ as we did in Section \ref{uniform est 3D}.  A slight modification of the proofs of Lemmas \ref{gradcqlemma} and Lemma \ref{patclemma} yields the desired estimates for $\|\nabla c\|_q$ and $\|\pa_t c\|_q$ in two dimensions, which are summarized in the following lemmas.

\begin{lemma} 
    Let $q \ge 2$. Then there exist constants $R_{8}>2$ and $R_{9}>q$ such that
    \begin{equation} \label{nablacqq2d} 
        \frac{\rm d}{\dt} \|\nabla c \|_q^q + C\int_\Omega \left|\nabla (|\nabla c |^\frac{q}{2}) \right|^2 \dx \le C \|\nabla\vv\|_2^{R_{8}} + C \|\nabla c \|_q^{R_{9}} + C.
    \end{equation}
\end{lemma}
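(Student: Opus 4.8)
The plan is to mirror the proof of Lemma~\ref{gradcqlemma}, exploiting the sharper Sobolev embeddings available in two dimensions so that only $\|\nabla\vv\|_2$ (rather than $\|\nabla\vv\|_{\frac{12}{5}}$) enters on the right-hand side. First I would test the pointwise identity \eqref{regularequationcn2d} with $-{\rm div}(|\nabla c|^{q-2}\nabla c)$ and integrate over $\Omega$. The parabolic term produces $\frac{1}{q}\frac{\rm d}{\dt}\|\nabla c\|_q^q$; the diffusion term, after integration by parts together with the standard pointwise identity ${\rm div}(|\nabla c|^{q-2}\nabla c)=|\nabla c|^{q-2}\Delta c+\nabla(|\nabla c|^{q-2})\cdot\nabla c$, yields a coercive contribution bounded below by $C\int_\Omega|\nabla(|\nabla c|^\frac{q}{2})|^2\dx$; and the convective term, after integrating by parts and using ${\rm div}\,\vv=0$ to annihilate the piece containing second derivatives of $c$, is controlled by $C\int_\Omega|\nabla\vv|\,|\nabla c|^q\dx$. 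This is exactly the structure of the three-dimensional computation and requires no modification; it reduces the lemma to estimating $\int_\Omega|\nabla\vv|\,|\nabla c|^q\dx$.

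For this remaining term I would apply H\"older's inequality with exponents $(2,4,4)$ --- legitimate since $\frac12+\frac14+\frac14=1$ --- to write $\int_\Omega|\nabla\vv|\,|\nabla c|^q\dx\le\|\nabla\vv\|_2\,\big\| \, |\nabla c|^\frac{q}{2} \, \big\|_4^2$, and then invoke the two-dimensional Gagliardo--Nirenberg (Ladyzhenskaya) inequality $\|f\|_4^2\le C\|f\|_2\|\nabla f\|_2+C\|f\|_2^2$ with $f=|\nabla c|^\frac{q}{2}$. Since $\|f\|_2^2=\|\nabla c\|_q^q$, this gives
\[
\int_\Omega|\nabla\vv|\,|\nabla c|^q\dx\le C\|\nabla\vv\|_2\,\|\nabla c\|_q^\frac{q}{2}\,\big\|\nabla(|\nabla c|^\frac{q}{2})\big\|_2+C\|\nabla\vv\|_2\,\|\nabla c\|_q^q.
\]

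The conclusion then follows from repeated applications of Young's inequality. The first term on the right is split as $\epsilon\big\|\nabla(|\nabla c|^\frac{q}{2})\big\|_2^2+C\|\nabla\vv\|_2^2\|\nabla c\|_q^q$, with $\epsilon$ chosen small enough to be absorbed into the coercive quantity on the left; then $C\|\nabla\vv\|_2^2\|\nabla c\|_q^q\le C\|\nabla\vv\|_2^4+C\|\nabla c\|_q^{2q}$ and $C\|\nabla\vv\|_2\|\nabla c\|_q^q\le C\|\nabla\vv\|_2^2+C\|\nabla c\|_q^{2q}$. Collecting these bounds yields \eqref{nablacqq2d} with $R_8=4>2$ and $R_9=2q>q$. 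The argument is entirely routine; I do not foresee a genuine obstacle, the only mildly technical point being the algebraic manipulation of the diffusion term to extract $\int_\Omega|\nabla(|\nabla c|^\frac{q}{2})|^2\dx$, which is standard and identical to the three-dimensional case treated in Lemma~\ref{gradcqlemma}.
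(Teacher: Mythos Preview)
Your proposal is correct and follows essentially the same route as the paper: test the pointwise concentration equation with $-{\rm div}(|\nabla c|^{q-2}\nabla c)$ to obtain $\frac{\rm d}{\dt}\|\nabla c\|_q^q + C\int_\Omega|\nabla(|\nabla c|^{q/2})|^2\dx \le C\int_\Omega|\nabla\vv|\,|\nabla c|^q\dx$, then apply H\"older with exponents $(2,4)$, the two-dimensional Gagliardo--Nirenberg inequality for $|\nabla c|^{q/2}$, and finish with Young's inequality. Your exposition is in fact slightly more explicit than the paper's, as you record the concrete exponents $R_8=4$ and $R_9=2q$.
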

\begin{proof}
    As we did in the proof of Lemma \ref{gradcqlemma}, if we multiply $- \nabla \cdot (|\nabla c|^{q-2} \nabla c)$ to \eqref{regularequationcn2d}, we have
    \[
    \frac{\rm d}{{\rm d}t} \int_\Omega |\nabla c |^q \dx + C\int_\Omega \left|\nabla (|\nabla c |^\frac{q}{2}) \right|^2 \dx \le C \int_\Omega |\nabla \vv| |\nabla c|^q \dx .
    \]
    By applying Hölder’s inequality, we estimate the term on the right-hand side as
    \begin{equation*}
    \int_\Omega |\nabla \vv| |\nabla c |^q \dx \le \|\nabla \vv\|_2 \| |\nabla c |^\frac{q}{2} \|_4^2.
    \end{equation*}
    Furthermore, we use the Gagliardo--Nirenberg interpolation inequality to obtain
    \[
    \||\nabla c |^\frac{q}{2}\|_{4}^2 \le C\| |\nabla c|^\frac{q}{2}\|_2^2 \|\nabla (|\nabla c|^\frac{q}{2})\|_2^2 + C\||\nabla c|^\frac{q}{2}\|_2^2.
    \]
    Finally, the application of Young's inequality leads to the desired result.
\end{proof}

\begin{lemma}
    Let $q>2$. Then there exist constants $R_{10}>2$ and $R_{11},R_{12}>q$ such that
    \begin{equation} \label{patcqq2d} 
    \frac{\rm d}{\dt} \|\pa_t c\|_q^q + C\int_\Omega \left| \nabla (|\pa_t c|^\frac{q}{2}) \right|^2 \dx \le C \|\pa_t \vv\|_2^{R_{10}} + C\|\nabla c\|_q^{R_{11}} + C\|\pa_t c\|_q^{R_{12}} +C.
    \end{equation}
\end{lemma}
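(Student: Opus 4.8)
The plan is to mirror the proof of Lemma~\ref{patclemma}, making only the one adjustment that in two dimensions the Sobolev embedding $W^{1,2}(\Omega)\hookrightarrow L^t(\Omega)$ holds for \emph{every} finite $t$, which is what relaxes the hypothesis from $q\ge 4$ to $q>2$. First I would differentiate \eqref{regularequationcn2d} in time and test the resulting identity with $|\pa_t c|^{q-2}\pa_t c$; this is legitimate thanks to the smoothness of $\pa_t c_\delta^n$ and $\pa_t^2 c_\delta^n$ on $I$, and after integration by parts it gives
\[
\frac{1}{q}\frac{\rm d}{\dt}\|\pa_t c\|_q^q + C\int_\Omega \left|\nabla(|\pa_t c|^\frac{q}{2})\right|^2\dx = -\int_\Omega (\pa_t\vv\cdot\nabla c)\,|\pa_t c|^{q-2}\pa_t c\dx.
\]

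Next I would estimate the right-hand side. By H\"older's inequality with the three exponents $2$, $q$ and $\frac{2q}{q-2}$ — admissible precisely because $q>2$, so that $\frac12+\frac1q+\frac{q-2}{2q}=1$ — one obtains
\[
\left|\int_\Omega (\pa_t\vv\cdot\nabla c)\,|\pa_t c|^{q-2}\pa_t c\dx\right|
\le \|\pa_t\vv\|_2\,\|\nabla c\|_q\,\|\pa_t c\|_{\frac{2q(q-1)}{q-2}}^{q-1}
= \|\pa_t\vv\|_2\,\|\nabla c\|_q\,\big\|\,|\pa_t c|^\frac{q}{2}\,\big\|_{\frac{4(q-1)}{q-2}}^{\frac{2(q-1)}{q}}.
\]
Since $\tfrac{4(q-1)}{q-2}<\infty$, the two-dimensional embedding $W^{1,2}(\Omega)\hookrightarrow L^{\frac{4(q-1)}{q-2}}(\Omega)$ applies (this is exactly the place where the critical exponent $6$ used in the three-dimensional argument, and hence the restriction $q\ge 4$, is no longer needed), so
\[
\big\|\,|\pa_t c|^\frac{q}{2}\,\big\|_{\frac{4(q-1)}{q-2}}^{\frac{2(q-1)}{q}}
\le C\left(\|\nabla(|\pa_t c|^\frac{q}{2})\|_2 + \|\,|\pa_t c|^\frac{q}{2}\,\|_2\right)^{\frac{2(q-1)}{q}}.
\]

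Finally, since $\tfrac{2(q-1)}{q}<2$, I would apply Young's inequality: after splitting the last factor into a sum of the two terms raised to the power $\tfrac{2(q-1)}{q}$, the contribution carrying $\|\nabla(|\pa_t c|^{q/2})\|_2$ to a power strictly below $2$ is absorbed into $\epsilon\int_\Omega|\nabla(|\pa_t c|^{q/2})|^2\dx$ on the left, while the remaining factors $\|\pa_t\vv\|_2$, $\|\nabla c\|_q$ and $\|\,|\pa_t c|^{q/2}\,\|_2^{2(q-1)/q}=\|\pa_t c\|_q^{q-1}$ are distributed, via Young's inequality with three conjugate exponents, into terms $\|\pa_t\vv\|_2^{R_{10}}$, $\|\nabla c\|_q^{R_{11}}$, $\|\pa_t c\|_q^{R_{12}}$. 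Multiplying through by $q$ yields \eqref{patcqq2d}. The only point requiring care is the bookkeeping of the Young exponents so that $R_{10}>2$ and $R_{11},R_{12}>q$ hold simultaneously; this is possible because $\frac12-\frac1q>0$ leaves enough slack in the splitting (one assigns $\|\nabla c\|_q$ a power slightly above $q$ and $\|\pa_t c\|_q^{q-1}$ a total power slightly above $q$, which forces a large, hence $>2$, power on $\|\pa_t\vv\|_2$). I expect no genuine obstacle here — the two-dimensional argument is strictly simpler than its three-dimensional counterpart.
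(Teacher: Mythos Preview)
Your proposal is correct and follows essentially the same route as the paper's own proof: differentiate \eqref{regularequationcn2d} in time, test with $|\pa_t c|^{q-2}\pa_t c$, apply H\"older's inequality with exponents $2$, $q$, $\tfrac{2q}{q-2}$, use the two-dimensional embedding $W^{1,2}(\Omega)\hookrightarrow L^{\frac{4(q-1)}{q-2}}(\Omega)$ on $|\pa_t c|^{q/2}$, and finish with Young's inequality since $\tfrac{2(q-1)}{q}<2$. Your additional remarks on the Young-exponent bookkeeping are fine and indeed the only point the paper leaves implicit.
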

\begin{proof}
    Analogously to the proof of Lemma \ref{patclemma}, we have
    \[
    \frac{\rm d}{\dt} \|\pa_t c\|_q^q + C \int_\Omega \left| \nabla(|\pa_t c |^\frac{q}{2}) \right|^2 \dx \le C \int_\Omega |\pa_t \vv| |\nabla c| |\pa_t c|^{q-1}\dx.
    \]
    By applying H\"older's inequality and the Sobolev embedding, we have
    \begin{align*}
        \int_\Omega |\pa_t \vv| |\nabla c| |\pa_t c|^{q-1}\dx
        &\le \|\pa_t \vv\|_2\|\nabla c\|_q\||\pa_t c|^\frac{q}{2}\|_{\frac{4q-4}{q-2}}^{\frac{2q-2}{q}}\\
        &\le C \|\pa_t \vv\|_2 \|\nabla c \|_q \left( \||\pa_t c|^\frac{q}{2}\|_2^{\frac{2(q-1)}{q}} +\|\nabla (|\pa_t c|^\frac{q}{2})\|_2^\frac{2(q-1)}{q}\right).
    \end{align*}
    Since $\frac{2(q-1)}{q} <2$, by using Young's inequality, we can obtain the desired result.
\end{proof} 

We now use the above lemmas with $q=q_0$, where $q_0$ is the exponent specified in Theorem \ref{maintheorem2d}. In addition, the terms involving the forcing term in \eqref{Ipv2d} and \eqref{Jpv2d} can be estimated by H\"older's inequality and Young's inequality as 
\begin{align*}
    \int_\Omega |\nabla\fff \cdot \nabla \vv| \dx &\le \|\nabla \fff\|_2\|\nabla \vv\|_2 \le C\|\nabla \fff\|_2^2 +  C\|\nabla \vv\|^2_2 \le C + C \|\nabla\vv\|_2^2, \\
    \int_\Omega |\pa_t \fff\cdot  \pa_t \vv | \dx &\le \|\pa_t \fff\|_2\|\pa_t \vv\|_2 \le C \|\pa_t \fff\|_2^2 + C\|\pa_t \vv\|^2_2.
\end{align*}
Then, by collecting the estimates \eqref{Ipv2d}-\eqref{patcqq2d}, we deduce, for some exponents $S_1,S_2>2$ and $S_3,S_4>q$, that
\[
\begin{split}
     \frac{\rm d}{\dt} \|&\nabla\vv_\delta^n\|_2^2 + \frac{\rm d }{\dt}  \|\pa_t \vv_\delta^n\|_2^2 + \frac{\rm d}{\dt}\|\nabla c_\delta^n\|_q^q + \frac{\rm d}{\dt}\|\pa_t c_\delta^n\|_q^q  + C\calI_p(c_\delta^n,\vv_\delta^n) + C \calJ_p(c_\delta^n,\vv_\delta^n) \\
    & \le C\Big( 1 + \|\pa_t \fff\|_2^2+ \|\nabla\vv_\delta^n\|_2^{S_1} + \|\pa_t \vv_\delta^n\|_2^{S_2}+ \|\nabla c_\delta^n\|_q^{S_3} + \|\pa_t c_\delta^n\|_q^{S_4} \Big).
\end{split}
\]
To apply Lemma~\ref{Gronwalllemma}, we need to verify two conditions: the absolute continuity of $\|\nabla c^n_\delta\|_q$ and $\|\partial_t c^n_\delta\|_q$, and the boundedness of the initial data. Both conditions can be readily checked by the arguments analogous to those used in the three-dimensional case, and thus, we omit the details. In view of the assumption $\partial_t \boldsymbol{f} \in L^2(Q_T)$ from Theorem~\ref{maintheorem2d}, we may then apply the local Gr\"onwall inequality to obtain the following local uniform estimates: there exists a constant $C>0$ independent of $n\in\mathbb{N}$ and $\delta>0$ such that
\begin{equation}\label{uni_est_1}
\begin{aligned}
\|\nabla \vv^n_\delta\|_{L^\infty(I';L^2(\Omega))} &+ 
\|\pa_t \vv^n_\delta\|_{L^\infty(I';L^2(\Omega))} + \|\nabla c^n_\delta\|_{L^\infty(I';L^q(\Omega))} + \|\pa_t c^n_\delta\|_{L^\infty(I';L^q(\Omega))} \\
&+\|\calI_p(c^n_\delta,\vv^n_\delta)\|_{L^1(I')} + \|\calJ_p(c^n_\delta,\vv^n_\delta)\|_{L^1(I')} \le C,
\end{aligned}
\end{equation}
where $I'=[0,T']$ for some $T' \in (0,T)$.

\subsection{Proof of Theorem \ref{maintheorem2d}}
In this section, we shall prove the existence of the strong solutions of \eqref{main_sys}-\eqref{concentration} in two dimensions. Since we use an approach similar to the one employed in Section \ref{sec:limit}, we shall only outline the proof.
As a first step, from the uniform estimate \eqref{uni_est_1} with \eqref{grad^2vp}, we can extract a (not relabeled) subsequence with respect to $n\in\mathbb{N}$ such that
\begin{equation} \label{convergences 2d}
    \begin{aligned}
        \vv_\delta^n &\rightharpoonup \vv_\delta &&\mbox{weakly in }L^{p^-}(I';W^{2,p^-}(\Omega)), \\
     \vv_\delta^n & \overset{\ast} {\rightharpoonup} \vv_\delta && \text{weakly-$^*$ in } L^\infty(I';W^{1,2}(\Omega)), \\
     \partial_t \vv_\delta^n & \overset{\ast} {\rightharpoonup} \partial_t \vv_\delta && \text{weakly-$^*$ in } L^\infty(I';L^2(\Omega)), \\
     c_\delta^n & \overset{\ast}  {\rightharpoonup} c_\delta && \text{weakly-$^*$ in } L^\infty(I';W^{1,q}(\Omega)), \\
     \pa_t c_\delta^n & \overset{\ast} {\rightharpoonup} \pa_t c_\delta && \text{weakly-$^*$ in }L^\infty(I';L^q(\Omega)). 
    \end{aligned}
\end{equation}
Since $W^{2,p^-}(\Omega)$ is compactly embedded into $W^{1,2}(\Omega)$ for $p^->1$, by the Aubin--Lions lemma, we have  
\begin{equation*}
    \vv_\delta^n \to \vv_\delta \quad \mbox{strongly in }
    L^2(I';W^{1,2}(\Omega)),
\end{equation*} 
from which we obtain the pointwise convergence
\[
\DD\vv^n_\delta \to \DD\vv_\delta \quad \text{a.e. in }Q_{T'},
\]
and the strong convergence of the convection term
\[
(\vv_\delta^n \cdot \nabla) \vv_\delta^n \to (\vv_\delta\cdot \nabla) \vv_\delta \quad \text{in }L^\frac{4}{3}(Q_{T'}).
\]
Furthermore, for the approximate concentration, we have that
\[
c_\delta^n \to c_\delta \quad \text{strongly in } L^q(Q_{T'}),
\]
and the pointwise convergence
\begin{equation*}
    c_\delta^n \to c_\delta \quad \text{a.e. in } Q_{T'}.
\end{equation*}
As a consequence, thanks to the continuity of the stress tensor $\SSS$, we deduce
\begin{equation*}
    \SSS(c_\delta^n,\DD\vv_\delta^n) \to \SSS(c_\delta,\DD\vv_\delta) \quad \text{a.e. in }Q_{T'},
\end{equation*}
which, together with \eqref{best2d} and the Vitali convergence theorem, implies that
\[
\SSS(c^n_\delta, \DD\vv^n_\delta) \to \SSS(c_\delta, \DD\vv_\delta) \quad \text{in }L^1(Q_{T'}).
\]
Therefore, by passing $n\in\mathbb{N}$ to the limit, we have for any $\xi \in C^\infty_0(I')$ that 
\begin{equation*}
    \int_{I'} \left(\innerproduct{\pa_t \vv_\delta + (\vv_\delta \cdot \nabla) \vv_\delta}{\ww_i} + \innerproduct{ \SSS(c_\delta,\DD\vv_\delta)}{\DD\ww_i} \right) \xi \dt
    = \int_{I'} \big(\innerproduct{\fff}{\ww_i}\big) \xi  \dt.
\end{equation*}
Now, for $a= \min\{p^-,\frac{2q_0}{2+q_0}\}$, we have
\[
|\nabla \SSS(c_\delta,\DD\vv_\delta)|^a 
\le C|\nabla \DD\vv_\delta|^a + C(1+|\DD\vv_\delta|^2)^\frac{a}{2} \log^a(2+ |\DD\vv_\delta|)|\nabla c_\delta|^a.
\]
Since $q>2$, by Young's inequality, we see that
\begin{align*}
|\nabla \SSS(c_\delta,\DD\vv_\delta)|^a &\le C |\nabla \DD\vv_\delta|^a + C|\DD\vv_\delta|^2 + C\log^\alpha(2+ |\DD\vv_\delta|) + C|\nabla c_\delta |^q + C \\
&\le C |\nabla \DD\vv_\delta|^a + C|\DD\vv_\delta|^2 + C|\nabla c_\delta|^q + C,
\end{align*}
where $\frac{a}{2} + \frac{a}{\alpha} + \frac{a}{q} =1$ with $\alpha>1$ large enough. This, together with \eqref{convergences 2d}, implies $\nabla \SSS(c_\delta,\DD\vv_\delta) \in L^{\min\{p^-,\frac{2q_0}{2+q_0}\}}(Q_{T'})$.
Therefore, the regularized velocity field $\vv_\delta$ satisfies the uniform estimate
\[
\|\pa_t\vv_\delta\|_{L^\infty(I';L^2(\Omega))} + \|\nabla \SSS(c_\delta,\DD\vv_\delta)\|_{L^{\min\{p^-,\frac{2q_0}{2+q_0}\}}(Q_{T'})} + \|(\vv_\delta \cdot \nabla) \vv_\delta\|_{L^\frac{4}{3}(Q_{T'})} \le C,
\]
and the variational formulation
\begin{equation*} 
    \int_{Q_{T'}} \big(\pa_t \vv_\delta+ (\vv_\delta \cdot \nabla) \vv_\delta   - {\rm div} \, (\SSS(c_\delta,\DD\vv_\delta)) \big) \cdot \ffpsi \dx\dt = \int_{Q_{T'}} \fff \cdot \ffpsi \dx \dt \quad \text{for all }\ffpsi \in C_0^\infty(I';\mathcal{V}).
\end{equation*}
On the other hand, it is straightforward to show that the regularized concentration $c_\delta$ satisfies
\begin{equation*}
    \|\pa_t c_\delta\|_{L^2(Q_{T'})} + \|\, {\rm div} \, (c_\delta \vv_\delta)\|_{L^2(Q_{T'})} + \|\Delta c_\delta\|_{L^2(Q_{T'})} \le C,
\end{equation*}
and
\begin{equation*}
    \int_{Q_{T'}} \big(\pa_t c_\delta  + {\rm div}\,  (c_\delta\vv_\delta)  - \Delta c_\delta  \big)\, \phi\dx\dt = 0 \quad \text{for all }\phi \in C_0^\infty(I';C^\infty(\Omega)).
\end{equation*}

What remains to show is the identification of the initial conditions and the passages to the limit as $\delta\to 0$, thereby establishing the existence of a strong solution $(\vv,c)$.
This can be demonstrated precisely in the same manner as in the three-dimensional case. Furthermore, as before, the pressure term $\pi$ satisfying $\nabla \pi \in L^{\min\{p^-,\frac{2q_0}{2+q_0}\}}(\Omega)$ can be recovered by applying de Rham's theorem, thereby completing the proof of the existence of a strong solution to the system \eqref{main_sys}-\eqref{concentration}.

\subsection{Proof of Theorem \ref{uniqueness theorem2d}}
In this section, we follow the procedure detailed in Section~\ref{sec:unique} to prove uniqueness. Let $(\vv_1,c_1)$ and $(\vv_2,c_2)$ be two solutions of the problem \eqref{main_sys}-\eqref{concentration} in a two-dimensional domain. First, we take the difference between the equations \eqref{main_sys} corresponding to $\vv_1$ and $\vv_2$ and test $\vv_1 - \vv_2$. Then we have
\begin{equation} \label{uniPQ2d}
    \frac{1}{2} \frac{\rm d}{\dt} \|\vv_1 - \vv_2\|_2^2 + {\rm P} = {\rm Q}.
\end{equation}
Here, we shall employ the same notation as before in \eqref{P and Q} and \eqref{uniN}, for ${\rm P} = {\rm P_1} + {\rm P_2}$ and $Q$.
First, due to the monotonicity \eqref{P2}, we have
\begin{equation} \label{P12d}
{\rm P_1} \ge C \int_\Omega (1+ |\DD\vv_1|^2 + |\DD\vv_2|^2)^\frac{p(c_1)-2}{2}|\DD\vv_1 - \DD\vv_2|^2 \dx.
\end{equation}
Secondly, for some small $\alpha>0$, and for some $z$ between $c_1$ and $c_2$, we see that
\begin{align*} 
    |{\rm P_2}| 
    &\le C\int_\Omega (1+ |\DD\vv_1|^2 + |\DD\vv_2|^2)^{\frac{2p(z)-p(c_1)}{4} + \alpha } (1+|\DD\vv_1|^2+|\DD\vv_2|^2)^\frac{p(c_1)-2}{4}|\DD\vv_1 - \DD\vv_2||c_1 - c_2| \dx\\
    &\le C \left( \int_\Omega (1+|\DD\vv_1|^2 + |\DD\vv_2|^2)^{(2+\delta_1)(\frac{p^+}{2} - \frac{p^-}{4} + \alpha)} \dx \right)^\frac{2}{2+\delta_1} \|c_1 - c_2\|_s^2  \\
    &\hspace{0.4cm}+\epsilon \int_\Omega (1+ |\DD\vv_1|^2+ |\DD\vv_2|^2)^\frac{p(c_1)-2}{2} |\DD\vv_1 - \DD\vv_2|^2 \dx,
\end{align*}
where we have used H\"older's inequality for some small $\delta_1>0$ and sufficiently large $s>2$ with $1= \frac{1}{2+\delta_1} + \frac{1}{s} + \frac{1}{2}$. Furthermore, analogously to \eqref{highregularityDv}, we obtain
\[
\|(\overline{\DD}\vv_\delta^n)^\frac{p^-}{2}\|_{r}^{2} \le C\|\nabla ((\overline{\DD}\vv_\delta^n)^\frac{p^-}{2})\|_2^2  + C\|(\overline{\DD}\vv_\delta^n)^\frac{p^-}{2}\|_{2}^{2} \le C \calI_p(c_\delta^n,\vv_\delta^n) +C\|\overline{\DD}\vv^n_\delta\|_{p^-}^{p^-},
\]
for any $r \in [1,\infty)$ and hence, we deduce $\DD\vv \in L^{p^-}(I';L^{\frac{p^-}{2} r}(\Omega)) \cap L^\infty(I';L^2(\Omega))$ by passing $n\to\infty$ and $\delta\to 0$. If we choose $r>1$ large enough, by the interpolation inequality, we get for some small $\delta_2>0$ that
\[
\DD\vv \in L^{2+p^--\delta_2}(Q_{T'}).
\]
Note that $(2+ \delta_1)(p^+-\frac{p^-}{2} + 2 \alpha) \le 2+p^- -\delta_2$ is satisfied when $p^+ < 1+ p^-$, which automatically holds in the shear-thinning regime $1<p^-\le p^+ \le2$. Therefore, by the Sobolev embedding, we obtain
\begin{equation} \label{P22d}
|{\rm P_2}| \le F(t) \|\nabla (c_1-c_2)\|_2^2 + \epsilon \int_\Omega (1+ |\DD\vv_1|^2 + |\DD\vv_2|^2)^\frac{p(c_1)-2}{2} |\DD\vv_1-\DD\vv_2|^2 \dx, 
\end{equation}
for some $F(t) \in L^1(I')$. By collecting the estimates \eqref{P12d} and \eqref{P22d}, we get
\begin{equation} \label{P2d}
C\int_\Omega (1+ |\DD\vv_1|^2 + |\DD\vv_2|^2)^\frac{p(c_1)-2}{2}|\DD\vv_1 - \DD\vv_2|^2 \dx \le F(t) \|\nabla(c_1-c_2)\|_2^2 + {\rm P}.
\end{equation}
Furthermore, since $p^->1$, there exists $\ell\in (\frac{4}{3},2)$ such that $\frac{2-p(c_1)}{2} \cdot \frac{2\ell}{2-\ell} \le 2.$ Consequently, from the fact that $\vv_1, \vv_2 \in L^\infty(I'; W^{1,2}(\Omega))$, we have
\[
\Big\|(\overline{\DD}\vv_1)^\frac{2-p(c_1)}{2} + (\overline{\DD}\vv_2)^\frac{2-p(c_1)}{2} \Big\|_{\frac{2\ell}{2-\ell}} \in L^\infty(I').
\]
Therefore, from Lemma \ref{difference_q}, we conclude that
\begin{equation*}
    C\|\DD\vv_1 - \DD\vv_2\|_\ell^2 \le F(t) \|\nabla(c_1-c_2)\|_2^2 + {\rm P}.
\end{equation*}
For the second term ${\rm Q}$, by Hölder's inequality, we have
\begin{equation*}
    {\rm Q} = - \int_\Omega ((\vv_1-\vv_2)\cdot \nabla ) \vv_1 \cdot(\vv_1 - \vv_2)\dx
    \le \|\vv_1 - \vv_2\|_4^2 \|\nabla \vv_1\|_2 \le C \|\vv_1 - \vv_2\|_4^2,
\end{equation*}
where we have used the regularity $\vv_1 \in L^\infty(I';W^{1,2}(\Omega))$. Furthermore, since $2 < 4 < \frac{2\ell}{2-\ell}$ holds for $\ell > \frac{4}{3}$, from the Gagliardo–Nirenberg inequality, Korn's inequality and Young's inequality, we have that
\begin{equation} \label{Q2d}
    {\rm Q} \le C \|\vv_1 - \vv_2\|_2^{2\theta} \|\DD\vv_1 - \DD\vv_2 \|_\ell^{2(1-\theta)}
    \le C\|\vv_1 - \vv_2\|_2^2 + \epsilon\|\DD\vv_1 - \DD\vv_2\|_\ell^2,
\end{equation}
where $\frac{1}{4}= \frac{\theta}{2} + \frac{2-\ell}{2\ell}(1-\theta)$.
Consequently, \eqref{uniPQ2d} combined with \eqref{P2d} and \eqref{Q2d} implies that
\[
\frac{1}{2} \frac{\rm d}{\dt} \|\vv_1 - \vv_2\|_2^2 + C \|\DD\vv_1 - \DD\vv_2\|_\ell^2 \le F(t) \|\nabla (c_1 - c_2)\|_2^2 + C\|\vv_1 - \vv_2 \|_2^2.
\]

Next, taking the difference between the equations \eqref{concentration} for $c_1$ and $c_2$, we obtain
\begin{equation*}
    \pa_t(c_1-c_2) - \Delta(c_1-c_2) + {\rm div}\, (c_1\vv_1 - c_2 \vv_2) = 0.
\end{equation*}
We then test $-\Delta (c_1-c_2)$ to the above equation and use Young's inequality to get
\begin{equation*}
    \begin{aligned}
    \frac{1}{2}\frac{\rm d}{\dt} &\int_\Omega |\nabla (c_1-c_2)|^2 \dx + \int_\Omega |\nabla^2 (c_1-c_2)|^2 \dx \\
    &\le C \int_\Omega |(\vv_1 - \vv_2) \cdot \nabla c_1|^2\dx + C\int_{\Omega}|\vv_2 \cdot\nabla(c_1-c_2)|^2 dx =: {\rm U_1} + {\rm U_2}.
    \end{aligned}
\end{equation*}
To estimate the first term ${\rm U_1}$, as we did in the proof of the three-dimensional case, we first note that $\vv \cdot \nabla c \in L^\infty(I';L^2(\Omega))$ for any constructed solution pair $(\vv,c)$.  Therefore, it follows that $\nabla c \in L^\nu(Q_{T'})$ for any $1 \le \nu <\infty$ by applying the maximal regularity theory with the assumption $c_0 \in W^{2,2}(\Omega)$. Now, by H\"older's inequality, we see that
\begin{equation*} 
{\rm U_1} \le \|\vv_1 - \vv_2\|_4^2 \|\nabla c_1\|_4^2 .
\end{equation*}
Furthermore, we shall use the Gagliardo--Nirenberg inequality along with the condition $2 < 4 < \frac{2\ell}{2-\ell}$, Korn's inequality and Young's inequality to obtain
\begin{align*}
{\rm U_1} &\le C\|\nabla c_1\|_4^2 \|\vv_1 - \vv_2\|_2^{2\theta}\|\DD\vv_1 - \DD\vv_2\|_\ell^{2(1-\theta)} \\
&\le C \|\nabla c_1\|_4^\frac{2}{\theta} \|\vv_1-\vv_2\|_2^2 + \epsilon\|\DD\vv_1-\DD\vv_2\|_\ell^2, 
\end{align*}
where $\frac{1}{4} = \frac{\theta}{2} + \frac{2-\ell}{2\ell}(1-\theta)$. Thus, using the fact that $\nabla c_1 \in L^\nu(Q_{T'})$ for any $1 \le \nu < \infty$, we conclude that
\[
{\rm U_1} \le G(t)\|\vv_1 - \vv_2\|_2^2 + \epsilon \|\DD\vv_1 - \DD\vv_2\|_\ell^2,
\]
for some $G(t) \in L^1(I')$. For the second term $\rm U_2$, by H\"older inequality and the Sobolev embedding, we see that
\[
{\rm U_2} \le \|\vv_2\|_{12}^2 \|\nabla c_1 - \nabla c_2\|_\frac{12}{5}^2 \le \|\nabla \vv_2\|_2^2 \|\nabla c_1- \nabla c_2\|_\frac{12}{5}^2 \le C \|\nabla c_1- \nabla c_2\|_\frac{12}{5}^2,
\]
where we have used $\vv \in L^\infty(I';W^{1,2}(\Omega))$ for the last inequality. Therefore, applying the Gagliardo--Nirenberg inequality and Young's inequality yields
\[
{\rm U_2} \le C\|\nabla(c_1- c_2)\|_2^2 + \epsilon \|\nabla^2(c_1-c_2)\|_2^2.
\]
Collecting all the estimates, we have
\begin{align*}
&\frac{\rm d}{\dt} \|\vv_1 - \vv_2\|_2^2 + \frac{\rm d}{\dt} \|\nabla(c_1-c_2)\|_2^2 +C \|\nabla^2(c_1-c_2)\|_2^2 + C \|\DD\vv_1 - \DD\vv_2\|_\ell^2 \\
&\le \widetilde{F}(t) \|\nabla (c_1 - c_2)\|_2^2 + \widetilde{G}(t)\|\vv_1 - \vv_2 \|_2^2,
\end{align*}
for some $\widetilde{F}(t), \widetilde{G}(t) \in L^1(I')$.
Therefore, we apply Gr\"onwall's inequality to obtain
\[
\vv_1 - \vv_2 = 0, \quad \nabla(c_1 - c_2) = 0,
\]
and hence, the application of Poincar\'e's inequality completes the proof.

\section{Conclusion}\label{sec:conc}

In this paper, we have established the existence of a unique local-in-time strong solution for a system describing generalized Newtonian fluids with a concentration-dependent power-law index in the shear-thinning regime, coupled with a convection-diffusion equation in three spatial dimensions, under the assumption of spatial periodicity. The key distinction from previous works, including~\cite{Diening2005} and \cite{choi2024}, lies in the fact that differentiating the Cauchy stress tensor yields additional terms involving \( \nabla c \) and \( \partial_t c \), which require further technical treatment to balance the associated exponents. Moreover, since the Gr\"onwall inequality must be applied to the approximate concentration at the continuous level, it is necessary to ensure appropriate temporal regularity. To impose minimal assumptions on the initial concentration  $c_0$ , a regularization technique was introduced. In addition, by further assuming $p^+ < \frac{2p^- - 2}{2 - p^-} p^-$, we also established the uniqueness of the strong solution. The proof presented herein can be readily extended to the two-dimensional case, in which the same result holds under a less restrictive condition on the exponent.

An interesting direction for future research is to establish similar results in the whole domain $\R^3$, and one may also consider extending the current local solution result to a result over an arbitrary time interval. Furthermore, as the existence and uniqueness have been proven in a physically meaningful setting, it becomes relevant from a practical viewpoint to design finite element approximations of such solutions, along with conducting a convergence analysis and deriving optimal error estimates. These topics will be addressed in a forthcoming works.

\section*{Acknowledgements}
K. Kang is supported by the National Research Foundation of Korea Grant funded by the Korea Government (RS-2024-00336346 and RS-2024-00406821).
Seungchan Ko is supported by National Research Foundation of
Korea Grant funded by the Korean Government (RS-2023-00212227). Kyueon Choi is supported by the National Research Foundation of Korea Grant funded by the Korea Government (RS-2023-00212227 and RS-2024-00406821).

\bibliography{references}
\bibliographystyle{abbrv}


\end{document}